\documentclass[10pt,a4paper]{article}

\usepackage[T1]{fontenc}
\usepackage[a4paper,margin=2.35cm]{geometry}
\usepackage{microtype}

\usepackage{epsf,epsfig,amsfonts,amsgen,amsmath,amstext,amsbsy,amsopn,amsthm,cases,listings}
\usepackage{ebezier,eepic}
\usepackage{xcolor}
\usepackage{multirow}
\usepackage{epstopdf}
\usepackage{graphicx}
\usepackage{pgf,tikz}
\usepackage{pgfplots}
\usepackage{mathrsfs}
\usepackage[bottom]{footmisc}
\usepackage{enumerate}
\usepackage{enumitem}
\usepackage[titletoc]{appendix}
\usepackage{booktabs}
\usepackage{url}
\usepackage{mathtools}
\usepackage{authblk}
\usepackage{amssymb}
\usepackage{empheq}
\usepackage{dsfont}
\usepackage{longtable}
\usepackage{float}

\usepackage{subfigure}
\pgfplotsset{compat=1.18}
\usetikzlibrary{arrows}
\usepackage{aligned-overset}
\usepackage{bm}
\usepackage{bbm}
\usepackage{titlesec}
\usepackage[
    backref=page,
    hyperfootnotes=false,
    colorlinks=true,
    linkcolor=blue!45!black,
    citecolor=blue!45!black,
    urlcolor=blue!45!black
]{hyperref}

\usepackage{algorithm}
\usepackage{algpseudocode}
\usepackage{multicol}

\usepackage[thinc]{esdiff}

\allowdisplaybreaks[1]
\setlist{itemsep=0.15em,topsep=0.25em,parsep=0pt}
\definecolor{uuuuuu}{rgb}{0.27,0.27,0.27}
\definecolor{sqsqsq}{rgb}{0.1255,0.1255,0.1255}

\theoremstyle{plain}
\newtheorem{definition}{Definition}[section]
\newtheorem{theorem}[definition]{Theorem}
\newtheorem{lemma}[definition]{Lemma}
\newtheorem{proposition}[definition]{Proposition}
\newtheorem{corollary}[definition]{Corollary}
\newtheorem{conjecture}[definition]{Conjecture}
\newtheorem{claim}[definition]{Claim}
\theoremstyle{definition}

\newtheorem{fact}[definition]{Fact}

\newcommand{\norm}[1]{\lVert#1\rVert}

\titleformat{\section}{\normalfont\large\bfseries}{\thesection}{0.75em}{}
\titleformat{\subsection}{\normalfont\normalsize\bfseries}{\thesubsection}{0.75em}{}
\titleformat{\subsubsection}{\normalfont\normalsize\itshape}{\thesubsubsection}{0.75em}{}
\titlespacing*{\section}{0pt}{0.95\baselineskip}{0.45\baselineskip}
\titlespacing*{\subsection}{0pt}{0.75\baselineskip}{0.3\baselineskip}
\titlespacing*{\subsubsection}{0pt}{0.6\baselineskip}{0.25\baselineskip}

\makeatletter
\let\oldthebibliography\thebibliography
\renewcommand{\thebibliography}[1]{%
  \oldthebibliography{#1}%
  \setlength{\itemsep}{0.15em}%
  \setlength{\parskip}{0pt}%
}
\newcommand{\plainfootnotetext}[1]{%
  \begingroup
  \renewcommand\thefootnote{}%
  \long\def\@makefntext##1{\noindent ##1}%
  \footnotetext{#1}%
  \addtocounter{footnote}{-1}%
  \endgroup
}
\makeatother
\begin{document}
\title{\bf\Large Vertex-colored Tur\'{a}n theorems with applications in extremal hypergraph problems}
\author[1]{Wanfang~Chen}
\author[2]{Jinghua~Deng}
\author[2]{Jianfeng~Hou}
\author[1]{Xizhi~Liu}
\author[2]{Yixiao~Zhang}
\affil[1]{School of Mathematical Sciences, University of Science and Technology of China, Hefei, 230026, China}
\affil[2]{Center for Discrete Mathematics, Fuzhou University, Fujian, 350108, China}
\date{\today}
\maketitle
\begin{abstract}
Balogh, Clemen, and Lidick\'{y}~\cite{BCL22b} proved that the $\ell_{2}$-norm Tur\'{a}n problem for $K_{5}^{3}$ is asymptotically solved by the balanced bipartite construction, and they further conjectured that this construction is uniquely extremal for all sufficiently large $n$. 
We confirm this conjecture. 
We also determine exactly the maximum number of cliques in an $n$-vertex $K_{5}^{3}$-free $3$-uniform hypergraph for all sufficiently large $n$, thereby verifying the corresponding case of a conjecture of Frankl, Gryaznov, and Talebanfard~\cite{FGT22}. 

The main ingredients are Tur\'{a}n-type theorems for vertex-colored graphs forbidding balanced cliques, including an edge bound, an $\ell_{2}$-norm bound, and a sharp crossing-triangle theorem in the two-colored balanced $K_{4}$-free case. We also use a local modification procedure within the stability method. This reduces the exact hypergraph problems to proving that the relevant objective function increases under suitable local changes near the bipartite construction. 
\end{abstract}

\plainfootnotetext{\textit{Keywords:} vertex-colored graphs, hypergraph Tur\'{a}n problems, stability method \\
\textit{MSC2020:} 05C35, 05C65, 05D05 \\
\textit{Email:} \texttt{a372959313@gmail.com}, \texttt{Jinghua\_deng@163.com}, \texttt{jfhou@fzu.edu.cn}, \texttt{liuxizhi@ustc.edu.cn}, \texttt{fzuzyx@gmail.com}}

\section{Introduction}\label{SEC:Introduction}

Given an integer $r \ge 2$, an $r$-uniform hypergraph (henceforth an $r$-graph) $\mathcal H$ is a collection of $r$-subsets of some set $V$. We call $V$ the vertex set of $\mathcal H$ and denote it by $V(\mathcal H)$. When $V$ is understood, we often identify $\mathcal H$ with its edge set.
Given a family $\mathcal F$ of $r$-graphs, an $r$-graph $\mathcal H$ is $\mathcal F$-free if it does not contain any member of $\mathcal F$ as a subgraph. A central problem in extremal combinatorics is to determine extremal properties of $\mathcal F$-free $r$-graphs on $n$ vertices. In particular, the Tur\'{a}n problem asks for the Tur\'{a}n number
\begin{align*}
    \mathrm{ex}(n,\mathcal F)
    \coloneqq \max\left\{|\mathcal H| : |V(\mathcal H)|=n \text{ and } \mathcal H \text{ is } \mathcal F\text{-free}\right\}.
\end{align*}
For $r=2$, the asymptotic behavior of $\mathrm{ex}(n,\mathcal F)$ is described by the Erd{\H{o}}s--Stone--Simonovits theorem~\cite{ES46} (see also~\cite{ES66}). 
Denote by $K_\ell^r$ the complete $r$-graph on $\ell$ vertices, with the superscript omitted when $r=2$. 
The classical theorem of Tur\'{a}n~\cite{Tur41} determines the exact value of $\mathrm{ex}(n,K_\ell)$ for every $\ell\ge 3$, with the case $\ell=3$ proved earlier by Mantel~\cite{Mantel07}. 
In contrast, for $r\ge 3$, determining $\mathrm{ex}(n,K_\ell^r)$ even asymptotically is notoriously difficult in general. 
Erd{\H{o}}s~\cite{Erd81} offered \$500 for determining the Tur\'{a}n density of $K_\ell^r$ for any single pair $\ell,r$ with $\ell > r \ge 3$. This prize remains unclaimed. We refer the reader to~\cite{BCL22a, Caen94Survey, Fur91, Kee11} for further background on hypergraph Tur\'{a}n theory.

In this paper, we study two exact extremal problems for $K_5^3$-free $3$-graphs: the Tur\'{a}n problem in the $\ell_2$-norm, where the number of edges is replaced by the sum of squared codegrees, and the problem of maximizing the number of cliques. 
Following Balogh, Clemen, and Lidick\'{y}~\cite{BCL22b}, for an $r$-graph $\mathcal{H}$, we write
\begin{align*}
    \norm{\mathcal H}_{2}
    \coloneqq \sum_{e\in \binom{V(\mathcal{H})}{r-1}} d_{\mathcal H}(e)^{2},
\end{align*}
where $d_{\mathcal H}(e)$ denotes the codegree of the $(r-1)$-set $e$. The corresponding extremal quantity and density are
\begin{align*}
    \mathrm{ex}_{\ell_2}(n,\mathcal F)
    \coloneqq \max\left\{\norm{\mathcal H}_{2} : |V(\mathcal H)|=n \text{ and } \mathcal H \text{ is } \mathcal F\text{-free}\right\}, 
\end{align*}
and
\begin{align*}
    \pi_{\ell_2}(\mathcal F)
    \coloneqq \lim_{n\to\infty}\frac{\mathrm{ex}_{\ell_2}(n,\mathcal F)}{\binom{n}{r-1}(n-r+1)^2},
\end{align*}
respectively. 
The existence of this limit follows from the standard averaging argument. 
Recently, Balogh, Clemen, and Lidick\'{y}~\cite{BCL22b} established the $\ell_2$-norm Tur\'{a}n density of the tetrahedron $K_4^3$ to be $1/3$. 
Subsequently, Bodn\'{a}r et al.~\cite{BCD25} determined the exact $\ell_2$-norm Tur\'{a}n number of $K_4^3$ for all sufficiently large $n$. 

For the $\ell_2$-norm Tur\'{a}n problem of $K_{5}^{3}$, the relevant extremal construction is the balanced complete bipartite $3$-graph. 
Given two disjoint sets $V_1$ and $V_2$, let $\mathbb{B}[V_1,V_2]$ denote the $3$-graph on $V_1\cup V_2$ consisting of all triples that intersect both $V_1$ and $V_2$. 
When $V_1\cup V_2=[n]$ is a balanced partition of $[n]$, we write $\mathbb{B}_n$ for $\mathbb{B}[V_1,V_2]$. 
The construction $\mathbb{B}_n$ shows that $\pi_{\ell_2}(K_5^3)\ge 5/8$. 
Using flag algebras, Balogh, Clemen, and Lidick\'{y}~\cite{BCL22b} proved the matching upper bound $\pi_{\ell_2}(K_5^3)\le 5/8$, establishing the following theorem. 

\begin{theorem}[{\cite[Theorem 1.2]{BCL22b}}]\label{THM:K53-L2-density}
It holds that $\pi_{\ell_2}(K_5^3)=5/8$.
\end{theorem}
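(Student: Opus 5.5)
The equality $\pi_{\ell_2}(K_5^3)=5/8$ has two halves. The lower bound comes from the balanced construction $\mathbb{B}_n$. The upper bound $\pi_{\ell_2}(K_5^3)\le 5/8$ is where the real work lies. I would follow the vertex-colored Tur\'an approach announced in the abstract rather than reproduce the flag-algebra certificate of~\cite{BCL22b}.

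\emph{Lower bound.} Take $|V_1|=|V_2|=n/2$. If a pair $\{u,v\}$ lies inside one part, its codegree in $\mathbb{B}_n$ is the size of the other part, about $n/2$. If the pair is crossing, every third vertex completes an edge, so its codegree is $n-2$. Hence
\begin{align*}
\norm{\mathbb{B}_n}_2 \approx 2\binom{n/2}{2}\Big(\frac{n}{2}\Big)^2+\frac{n^2}{4}\,n^2=\frac{n^4}{16}+\frac{n^4}{4}=\frac{5}{16}n^4 .
\end{align*}
Dividing by $\binom{n}{2}n^2\approx n^4/2$ gives $5/8$. The construction is $K_5^3$-free: among any five vertices, some three lie in the same part, and that triple is not an edge.

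\emph{Upper bound.} Let $\mathcal H$ be an $n$-vertex $K_5^3$-free $3$-graph. I would rewrite the norm as
\begin{align*}
\norm{\mathcal H}_2=\sum_{uv} d(uv)^2=\sum_{w} \sum_{\{u,v\}\subset N(w)}(\dots),
\end{align*}
or, more usefully, count pairs of edges sharing two vertices. For a fixed pair $\{u,v\}$, the common neighbourhood $N(uv)=\{w: uvw\in\mathcal H\}$ has size $d(uv)$. So $\norm{\mathcal H}_2$ counts the triples $(u,v,\{w,w'\})$ together with the diagonal terms.

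The key observation is a link argument. Fix a vertex $x$ and look at the link graph $L_x$. For $y\ne x$, colour the vertices of $L_x$ according to whether they lie in $N(xy)$; this gives a two-colored graph. The condition that $\mathcal H$ is $K_5^3$-free forbids a \emph{balanced} clique in $L_x$: if $\{a,b,c,d\}$ spans a $K_4$ in $L_x$ and enough of its triples lie in $\mathcal H$, then $\{x,a,b,c,d\}$ forms a $K_5^3$. With $y$ as a second apex, this forbids the corresponding two-colored balanced $K_4$ configurations. The edge bound and the $\ell_2$-norm bound for vertex-colored graphs forbidding balanced cliques then bound $\sum_{y} d(xy)^2$ by the value attained when $L_x$ is the link of $\mathbb{B}_n$, namely complete bipartite plus cliques. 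Summing over $x$ and using the identity $\sum_x\sum_y d(xy)^2 = 2\norm{\mathcal H}_2$ gives $\norm{\mathcal H}_2\le(5/16+o(1))n^4$.

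\emph{Main obstacle.} The hard step is formulating and proving the right vertex-colored Tur\'an inequality. It must be sharp at the bipartite link structure and must involve exactly the information forced by $K_5^3$-freeness. I would first try the two-colored balanced $K_4$-free crossing-triangle theorem. In the link graph, a crossing triangle whose extension by two colors would create a $K_5^3$ must be absent. I would prove that theorem by Zykov symmetrization within each color class together with a Lagrangian-type optimization. If that fails to reach $5/8$ exactly, the fallback is the flag-algebra computation of~\cite{BCL22b}, which certifies the density bound directly.
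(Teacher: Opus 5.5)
Your lower bound is correct and is exactly what the paper uses: $\mathbb B_n$ is $K_5^3$-free by pigeonhole, and $\norm{\mathbb B_n}_2=(\frac{5}{16}+o(1))n^4$. The paper does not prove the upper bound. It quotes it from the flag-algebra computation of~\cite{BCL22b}.

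Your proposed self-contained route to $\pi_{\ell_2}(K_5^3)\le 5/8$ has a genuine gap at its key observation. Colouring $V(L_x)$ by membership in $N(xy)=N_{L_x}(y)$ gives no forbidden configuration. A $K_4$ on $\{a,b,c,d\}$ in $L_x$ supplies only the six triples through $x$. A $K_5^3$ also needs $abc,abd,acd,bcd\in\mathcal H$, and the colouring says nothing about those. For instance, let $\mathcal H$ consist of all triples through $x$ except $xyc$ and $xyd$. It is $K_5^3$-free, since every $5$-set contains a triple avoiding $x$. Yet $\{a,b,c,d\}$ with $a,b\in N(xy)$ is a balanced $K_4$ in $L_x$.

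Worse, the per-vertex inequality you need, $\norm{L_x}_2\le(\frac{5}{8}+o(1))n^3$ for every $x$, is simply false. For the full star at $x$ one has $L_x=K_{n-1}$ and $\norm{L_x}_2=(n-1)(n-2)^2$. So no link-by-link estimate can give the bound. Any argument must average over $x$ using some global constraint, and your proposal contains none.

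In the paper, the coloured Tur\'an theorems enter only in the opposite direction. A balanced $K_4$ is taken with respect to a global bipartition $V_1\cup V_2$ of $V(\mathcal H)$ for which $\mathcal H$ contains almost all of $\mathbb B[V_1,V_2]$. Then every balanced $4$-set $Q$ has $\binom{Q}{3}\subseteq\mathbb B[V_1,V_2]$. So a balanced $K_4$ in $L_v$ forces a missing triple, and the links are nearly balanced-$K_4$-free (Lemma~\ref{LEMMA:common-balanced-k4-missing}). Such a bipartition is available only because $\mathcal H$ is already known to be close to $\mathbb B_n$, by the stability Theorem~\ref{THM:K53-L2-BCL-stability}. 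That theorem itself comes out of the flag-algebra certificate of~\cite{BCL22b}. Using these coloured results to derive the density is therefore circular.

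The crossing-triangle theorem is also beside the point here. It counts copies of $K_4^3$ through a vertex and is used for the clique problem, not for the $\ell_2$-norm. What actually proves the statement is your fallback, the flag-algebra computation, which is precisely the citation the paper relies on.
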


They further conjectured that the corresponding exact statement holds for all sufficiently large $n$. 

\begin{conjecture}[{\cite[Conjecture 1.4]{BCL22b}}]\label{CONJ:K53_l2_norm}
    There exists $n_0$ such that for every $n \ge n_0$,
    \begin{align*}
        \mathrm{ex}_{\ell_2}(n,K_5^3)=\norm{\mathbb{B}_n}_2, 
    \end{align*}
    and $\mathbb{B}_n$ is the unique $K_5^3$-free $3$-graph on $n$ vertices $\mathcal{H}$ satisfying $\norm{\mathcal{H}}_2 = \mathrm{ex}_{\ell_2}(n,K_5^3)$. 
\end{conjecture}

Our first main result confirms this conjecture.

\begin{theorem}\label{THM:L2-exact-K53}
   There exists an integer $n_0$ such that for all $n \ge n_0$, every $n$-vertex $K_5^3$-free $3$-graph $\mathcal{H}$ satisfies 
   \begin{align*}
       \norm{\mathcal{H}}_2 \le \norm{\mathbb{B}_n}_2,  
   \end{align*}
   and equality holds if and only if $\mathcal{H} \cong \mathbb{B}_n$. 
\end{theorem}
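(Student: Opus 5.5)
The proof uses the stability method, in three stages: a degree-stability reduction, structural control via the link graphs, and a local modification argument near $\mathbb{B}_n$.

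\textbf{Stage 1: reduction to a near-extremal graph with large minimum degree.} Let $\mathcal{H}$ be an $n$-vertex $K_5^3$-free $3$-graph with $\norm{\mathcal H}_2 \ge \norm{\mathbb{B}_n}_2$. By Theorem~\ref{THM:K53-L2-density} (together with the flag-algebra stability that underlies it), $\mathcal H$ is $o(n^3)$-close in edit distance to $\mathbb{B}[V_1,V_2]$ for some nearly balanced partition $V_1\cup V_2=V(\mathcal H)$. The standard deletion argument then applies. Removing a vertex $v$ changes $\norm{\cdot}_2$ by an amount $\Delta_v$, which is a cubic-order expression in the codegrees through $v$. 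If some vertex has $\Delta_v$ smaller than the corresponding marginal $\norm{\mathbb{B}_n}_2-\norm{\mathbb{B}_{n-1}}_2 \approx \tfrac{15}{8}\cdot\tfrac{n^3}{2}$ (up to constants) by $\varepsilon n^3$, delete it. Iterating gains too much, which contradicts the density bound. So we may assume every vertex has "$\ell_2$-degree" at least its value in $\mathbb{B}_n$ minus $\varepsilon n^3$, and this propagates the stability to every vertex.

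\textbf{Stage 2: structure of the links.} Fix the optimal partition $V_1\cup V_2$ minimizing the number of missing crossing edges. Call triples inside $V_1$ or $V_2$ \emph{bad}, and missing crossing triples \emph{missing}. For a vertex $v\in V_1$, the link graph $L(v)$ is a $2$-colored graph, colored by $V_1,V_2$. It is nearly complete between the colors and inside $V_2$, and sparse inside $V_1$. $K_5^3$-freeness of $\mathcal H$ makes $L(v)$, restricted to the common links, free of suitable balanced cliques. This is exactly where the vertex-colored Tur\'an theorems come in: the edge bound, the $\ell_2$-norm bound, and the crossing-triangle bound for two-colored balanced $K_4$-free graphs. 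Applying them to the links (or to codegree neighbourhoods of pairs), each bad edge $e$ forces $\Omega(n)$ missing triples near $e$. For example, a bad triple $\{a,b,c\}\subset V_1$ together with two vertices $x,y\in V_2$ would span a $K_5^3$ unless one of the triples through $\{x,y\}$ and the pair from $\{a,b,c\}$ is missing. A careful count charges each bad edge to at least roughly $cn$ missing triples, with a good constant.

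\textbf{Stage 3: local modification.} Replace $\mathcal H$ by $\mathbb{B}[V_1,V_2]$ step by step. Delete the bad edges and add the missing crossing edges in local packets around a vertex or pair. Show that $\norm{\cdot}_2$ strictly increases at each step. The gain from adding the missing triples is about $2\cdot(\text{codegree}\approx n)$ per affected pair. This beats the loss from the bad edges, whose pair codegrees inside a part are small. The sharp crossing-triangle theorem is needed to make the constants match. Finally, among bipartite constructions $\norm{\mathbb{B}[V_1,V_2]}_2$ is uniquely maximized at the balanced split, which is a one-variable calculation. This gives both the bound and uniqueness.

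The main obstacle is Stage 3: proving the strict increase under the local changes. The codegree interactions are quadratic, so bad edges and missing edges sharing pairs produce cross terms. I would first bound these cross terms via the minimum-degree condition from Stage 1 before applying the colored Tur\'an bounds.
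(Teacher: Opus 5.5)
\textbf{The gap is Stage~3.} Your overall architecture matches the paper's. It has four parts: BCL stability; a per-vertex lower bound $s_{\mathcal H}(v)\ge(\frac54-o(1))n^3$; link analysis with colored Tur\'an bounds; and local replacement followed by the one-variable bipartite calculation. Your deletion argument is an acceptable substitute for the paper's duplication-based near-regularity, although the marginal is about $\frac54 n^3$, not $\frac{15}{16}n^3$.

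You leave Stage~3 open, and the heuristic you give for it is false. Same-side pairs do not have small codegree: in $\mathbb B[V_1,V_2]$ a pair inside $V_1$ has codegree $|V_2|\approx n/2$. Suppose you delete $\mathcal B(e)$ and add $\mathcal M(e)$ at a same-side pair $e=\{u_1,u_2\}$. The $2d_{\mathcal M}(e)$ cross pairs $u_iw$ gain about $2n$ each. The $2d_{\mathcal B}(e)$ pairs $u_iy$ inside $V_1$ lose about $n$ each. The pair $e$ itself changes by about $n\bigl(d_{\mathcal M}(e)-d_{\mathcal B}(e)\bigr)$. The net change is roughly $n\bigl(5d_{\mathcal M}(e)-3d_{\mathcal B}(e)\bigr)$, so you need $d_{\mathcal M}(e)$ comparable to $d_{\mathcal B}(e)$ \emph{at that same pair}.

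Your edge-level charging (each bad edge forces $\Omega(n)$ missing triples) cannot give this. A single missing triple $abx$ serves every bad edge $abc$ with $c\in N_{\mathcal B}(ab)$. A one-shot global replacement would instead need $5|\mathcal M|>3|\mathcal B|$ plus control of the quadratic terms, and you have neither.

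\textbf{What closes this in the paper.} There are three ingredients.

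(1) First you need $\Delta(\mathcal B),\Delta(\mathcal M)\le 2\eta n^2$. Your Stage~2 only asserts the link shape that encodes this. The link $L(v)$ need not be balanced-$K_4$-free. However, each balanced $K_4$ in it contains a missing triple, so there are at most $|\mathcal M|n$ of them, and the colored removal lemma makes $L(v)$ balanced-$K_4$-free after deleting $o(n^2)$ edges. Then the colored edge and $\ell_2$ bounds (Corollaries~\ref{COR:2-colored-K4-L1-reduced} and~\ref{COR:2-colored-K4-L2-norm-reduced}) are combined with $s_{\mathcal H}(v)\le\norm{L}_2+n\bigl(|L|+|L[V_1,V_2]|\bigr)+o(n^3)$. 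This forces $|L|+|L[V_1,V_2]|\ge(\frac58-o(1))n^2$, so one side of $L(v)$ is sparse. A partition maximizing $|\mathcal H\cap\mathbb B[V_1,V_2]|$ then forces the sparse side to be $v$'s own part. The crossing-triangle theorem plays no role here; it is used only for clique counting. Your partition ``minimizing missing crossing edges'' is degenerate, since $V_1=V(\mathcal H)$ has none.

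(2) From $K_5^3$-freeness and these degree bounds one proves two pair-level facts. Every bad edge contains a pair $e$ with $d_{\mathcal M}(e)\ge n/10$. Every same-side pair satisfies $d_{\mathcal M}(e)\ge d_{\mathcal B}(e)-7\eta^{1/2}n$. For the second, pick $y\in N_{\mathcal B}(e)$ with $d_{\mathcal M}(u_1y)+d_{\mathcal M}(u_2y)$ small. If $N_{\mathcal H}(e)\cap V_2$ were also large, each pair $w,w'$ in a large common neighbourhood would need a missing triple meeting $\{u_1,u_2,y\}$ to avoid a $K_5^3$ on $\{u_1,u_2,y,w,w'\}$. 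This contradicts $\Delta(\mathcal M)\le 2\eta n^2$.

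(3) The replacement packet is $\mathcal B(e)\cup\mathcal M(e)$ for one same-side pair $e$ with $d_{\mathcal M}(e)\ge n/10$. Every crossing triple contains exactly one same-side pair, so packets do not interact and the hypotheses survive each step. The $\Delta$ bounds guarantee that all but $O(\xi^{1/2}n)$ affected cross pairs have codegree at least $n-O(\xi^{1/2}n)$. They also guarantee that all but $O(\xi^{1/2}n)$ affected same-side pairs have codegree at most $n/2+O(\xi^{1/2}n)$.

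Without (2) and this choice of packet, the strict increase in Stage~3 cannot be established.
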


We next turn to the clique-counting problem in hypergraphs. 
Given an $r$-graph $\mathcal{H}$, a clique in $\mathcal{H}$ is a subset $S \subseteq V(\mathcal{H})$ such that either $|S| < r$, or $|S| \ge r$ and every $r$-subset of $S$ belongs to $\mathcal{H}$. 
We write $k(\mathcal{H})$ for the number of cliques in $\mathcal{H}$. 
For an integer $t \ge 0$, define
\begin{align*}
    V(x,t) \coloneqq \sum_{j=0}^{t}\binom{x}{j}.   
\end{align*}
In particular, when $x$ is a positive integer, $V(x,t)$ represents the size of the Hamming ball of radius $t$ in the $x$-dimensional hypercube. 
Frankl, Gryaznov, and Talebanfard~\cite{FGT22} proposed the following conjecture. 

\begin{conjecture}[{\cite[Conjecture 14]{FGT22}}]\label{CONJ:cliques-exact-K53}
    Suppose that $\mathcal{H}$ is an $n$-vertex $r$-graph containing no clique of size $\ell+1$. 
    Then the total number of cliques in $\mathcal{H}$ is bounded by
    \begin{align*}
        k(\mathcal{H})
        \le V\!\left(\frac{(r-1)n}{\ell},r-1\right)^{\frac{\ell}{r-1}}. 
    \end{align*}
    Furthermore, when $r-1 \mid \ell$ and $\ell \mid (r-1)n$, the unique extremal case is the $n$-vertex $\frac{\ell}{r-1}$-partite $r$-graph whose edge set consists of all $r$-sets that intersect each part in at most $r-1$ vertices. 
\end{conjecture}

When $(r,\ell)=(3,4)$, the corresponding construction is the balanced complete bipartite $3$-graph $\mathbb{B}_n$. 
Thus, in our setting, the clique-counting problem again points to the same bipartite construction. 
Bodn\'{a}r~\cite{Bod23} proved an asymptotic version of the corresponding statement, and our second theorem gives the exact result for all sufficiently large $n$. 

\begin{theorem}\label{THM:cliques-exact-K53}
   There exists an integer $n_0$ such that for all $n \ge n_0$, every $n$-vertex $K_5^3$-free $3$-graph $\mathcal{H}$ satisfies 
   \begin{align*}
       k(\mathcal{H}) \le k(\mathbb{B}_n),  
   \end{align*}
   and equality holds if and only if $\mathcal{H} \cong \mathbb{B}_n$. 
\end{theorem}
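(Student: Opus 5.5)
The plan is the stability method combined with a local modification argument, mirroring what the abstract describes for Theorem~\ref{THM:L2-exact-K53}.

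First, note that $k(\mathcal H)=1+n+\binom{n}{2}+|\mathcal H|+k_4(\mathcal H)$, where $k_4$ counts copies of $K_4^3$. This holds because a $K_5^3$-free $3$-graph has no cliques of size $\ge 5$. So we must maximize $|\mathcal H|+k_4(\mathcal H)$, whose leading term is $k_4(\mathcal H)=\Theta(n^4)$. For $\mathbb B_n$, a $4$-set is a clique exactly when it meets both parts in at least one vertex and no part in four, i.e.\ when it is split $2+2$, $1+3$ or $3+1$.

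\textbf{Step 1 (asymptotics).} Use Bodn\'ar's asymptotic result~\cite{Bod23}: $k(\mathcal H)\le k(\mathbb B_n)+o(n^4)$.

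\textbf{Step 2 (stability).} Upgrade Step 1 to: near-extremal $\mathcal H$ is $o(n^3)$-close to $\mathbb B_n$. For each vertex $v$, the link graph $L_v$ is a graph, and $K_4^3$'s containing $v$ correspond to triangles $T$ of $L_v$ with $T\in\mathcal H$. Being $K_5^3$-free means that for each $v$, the $3$-graph $\mathcal H[N]$ restricted to triangles of $L_v$ is $K_4^3$-free there. I would two-color the vertex set by a max-cut type partition $V_1\cup V_2$ and apply the paper's vertex-colored Tur\'an theorems, in particular the sharp crossing-triangle theorem for two-colored balanced-$K_4$-free graphs, to the links. This bounds the number of clique-triangles in each link by the bipartite value with equality only near the bipartite structure. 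Averaging over $v$ yields that $\mathcal H$ has $o(n^3)$ edges inside $V_1$ or $V_2$, and $|V_i|=n/2+o(n)$.

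\textbf{Step 3 (cleaning and local modification).} Remove, or rather reassign, the $o(n)$ vertices of atypically small degree. Then show that every remaining vertex has link close to the bipartite link. For such a vertex, any triple inside $V_1$ containing it can be shown to destroy $\Omega(n)$ potential cliques, because $K_5^3$-freeness forces many crossing triples to be missing. Following the local modification procedure, pick a vertex $v$ with nonoptimal link and replace its link by that of a clone of a typical vertex in $V_1$ or $V_2$. Cloning preserves $K_5^3$-freeness when the clone source lies in the bipartite structure. The task then reduces to showing that the objective $|\mathcal H|+k_4(\mathcal H)$ strictly increases under this change, unless $v$ already looks exactly like a vertex of $\mathbb B[V_1,V_2]$. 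Iterating, we reach $\mathcal H\subseteq\mathbb B[V_1,V_2]$. Since adding edges only increases clique counts, we get $\mathcal H=\mathbb B[V_1,V_2]$. Finally, a direct computation shows that $k(\mathbb B[V_1,V_2])$ is uniquely maximized by the balanced partition, which gives both the bound and uniqueness.

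\textbf{Main obstacle.} The hard step is the increase claim in Step 3. We must verify that, for a vertex whose link deviates from the bipartite one by $t$ edges, the loss in cliques through $v$ is at least $c\,tn$, and that this dominates any gain from the internal triples. This requires controlling how few internal edges can coexist with near-complete crossing structure without a $K_5^3$. I would first try to show that each internal triple $xyz\subseteq V_1$ forces, for every $w\in V_2$ in a typical set, at least one missing crossing triple among the $4$-sets $\{x,y,z,w,w'\}$. That would give a quadratic-in-$n$ penalty per internal edge against only a linear gain.
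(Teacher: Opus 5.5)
There is a factual slip to fix first: in $\mathbb B_n$ a $4$-set split $1+3$ contains a triple inside one part. So only the $2+2$ four-sets are cliques, and $\mathrm N(K_4^3,\mathbb B_n)=\binom{\lceil n/2\rceil}{2}\binom{\lfloor n/2\rfloor}{2}\approx n^4/64$.

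The serious gap is your Step 2, which is circular. The crossing-triangle theorem and its stability version (Proposition~\ref{PROP:2-colored-K4-triangle-stability-sym}) apply only to a link that becomes balanced $K_4$-free after $o(n^2)$ deletions. $K_5^3$-freeness yields this only through one observation: a balanced $K_4$ on $Q$ in $L_v$ forces some triple of $\binom{Q}{3}$ to be a \emph{missing} crossing triple. That gives at most $|\mathcal M|n$ such copies (Lemma~\ref{LEMMA:common-balanced-k4-missing}). This is $o(n^4)$ only if you already know $|\mathcal M|=o(n^3)$, i.e.\ that $\mathcal H$ is already close to $\mathbb B[V_1,V_2]$. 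Likewise, the clique-triangles of $L_v$ inside a part are bad edges, which are controlled only once closeness is known. For a general near-extremal $\mathcal H$ and a max-cut partition, nothing bounds balanced $K_4$'s in the links, and Bodn\'ar's asymptotic bound alone gives no structure.

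The paper proves stability (Theorem~\ref{THM:K53-count-edge-stability}) by a different route. It extracts local statistics from the square terms of Bodn\'ar's flag-algebra certificate and deduces that all but $8\varepsilon n^5$ five-sets induce $E_5$, $J_4$ or $K_5^{3-}$. It then applies the induced removal lemma and uses the exact fact that a $3$-graph all of whose five-sets have these types equals some $\mathbb B[A,B]$. The $K_4^3$ count then forces $|A|\approx n/2$. The crossing-triangle stability enters only afterwards, combined with the near-regularity of $\kappa(v)$, to prove $\max\{\Delta(\mathcal B),\Delta(\mathcal M)\}\le 2\eta n^2$.

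Your Step 3 also fails as stated. A clone of $v$ from a typical $w$ has no edge containing both $v$ and $w$. But when $w$ is on the same side, the bipartite link of $v$ contains the $\approx n/2$ triples $vwx$ with $x$ on the other side, and these lie in $\approx n^2/8$ copies of $K_4^3$. Already for $\mathcal H=\mathbb B[V_1,V_2]$ and $v,w\in V_1$, cloning lowers $\Psi$ by $|V_2|+\binom{|V_2|}{2}$. So cloning gains only when $v$ has an $\Omega(n^2)$ deficit. That is why, in the paper, it yields only the $O(n^2)$ near-regularity of Lemma~\ref{LEMMA:K53-count-kappa-regularity}; it cannot remove a single bad edge at an otherwise perfect vertex.

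Your proposed per-edge penalty does not rescue this, for two reasons. First, the set of $w\in V_2$ with $xyw,xzw,yzw$ all present may be empty: the graph can pay for $K_5^3$-freeness by making $xyw$ missing for every $w$. Second, the forced missing triples are shared by all bad edges through a common pair, so they do not add up edge by edge.

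The missing idea is to switch on same-side \emph{pairs} rather than vertices. Using $\Delta(\mathcal M)\le 2\eta n^2$, the five-set argument shows that every bad edge contains a pair $e$ with $d_{\mathcal M}(e)\ge n/10$. It also shows that $d_{\mathcal M}(e)\ge d_{\mathcal B}(e)-7\eta^{1/2}n$ for every same-side pair (Lemma~\ref{LEMMA:common-local-bad-missing-control}). Write $e=\{u_1,u_2\}\subseteq V_1$, $X=N_{\mathcal M}(e)$ and $Y=N_{\mathcal B}(e)$. Replacing $\mathcal B(e)$ by $\mathcal M(e)$ creates at least $|X||V_2|-|X|(|X|+1)/2-4\xi n^2$ copies $\{u_1,u_2,x,x'\}$ and destroys at most $|Y|(|V_2|-|X|)+\xi n^2$. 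The net gain is of order $|X|^2/2=\Omega(n^2)$ (Lemma~\ref{LEMMA:K53-count-switching}).

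These switches need not preserve $K_5^3$-freeness. One iterates until the graph lies in $\mathbb B[V_1,V_2]$ and then compares with $\Psi(\mathbb B_n)$ via Lemma~\ref{LEMMA:K53-count-bipartite-max}. Insisting that every intermediate graph stay $K_5^3$-free is what pushes you to the lossy cloning move.
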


The main auxiliary input comes from Tur\'{a}n-type problems in vertex-colored graphs. 
The point is that, once a $K_{5}^{3}$-free $3$-graph is close to $\mathbb{B}_n$, the relevant link graphs carry a natural two-part vertex coloring induced by the underlying bipartition. 
Moreover, every balanced copy of $K_4$ in such a link graph must be accounted for by a missing triple of the bipartite template. Otherwise it would extend to a copy of $K_5^3$. 
Thus these link graphs have few balanced copies of $K_4$, and the required local estimates reduce to Tur\'{a}n-type statements for vertex-colored graphs. 
This leads us to study the following general setup. 
Let $G$ be a graph on vertex set $V$, and let $V = V_1 \cup \cdots \cup V_s$ be a fixed partition. 
For integers $1 \le q \le s$ and $t \ge 1$, we call a clique $K \subseteq G$ a $(q,t)$-balanced clique if there exists an index set $I \subseteq [s]$ with $|I|=q$ such that
\begin{align*}
    |V(K) \cap V_i| = t \quad\text{for every } i\in I, 
    \quad\text{and}\quad
    K\cong K_{qt}. 
\end{align*}
When $q=s$, this is simply a balanced $K_{st}$. 

We establish several Tur\'{a}n-type results for vertex-colored graphs that are useful in extremal problems for hypergraphs. 
In particular, we prove an edge bound and an $\ell_2$-norm bound for graphs with no balanced clique, as well as a sharp theorem on crossing triangles in the two-colored balanced $K_4$-free case. 
The precise statements are given in Section~\ref{SEC:Colored-Turan}. 

This paper is organized as follows. 
In Section~\ref{SEC:Colored-Turan}, we prove Tur\'{a}n-type results for vertex-colored graphs. 
In Section~\ref{SEC:Common-Structure}, we collect the common structural properties of $K_5^3$-free $3$-graphs that are close to the bipartite construction $\mathbb{B}_n$. 
These properties will be used in both exact arguments. 
In Section~\ref{SEC:K53-L2norm}, we combine the vertex-colored graph results with the common structural lemmas to prove Theorem~\ref{THM:L2-exact-K53}. 
In Section~\ref{SEC:K53-count-cliques}, we prove Theorem~\ref{THM:cliques-exact-K53}. 

Both exact proofs follow the same general mechanism. 
A stability theorem first gives a partition with respect to which the hypergraph is close to $\mathbb{B}_n$. 
The common structural results, together with objective-specific local degree estimates, then give the required local control of bad edges and missing edges. 
Finally, we apply a local modification procedure: for a suitable pair, we replace the bad edges containing this pair by the corresponding missing edges from the bipartite template. 
The key point is that this operation strictly increases the objective function, namely the $\ell_2$-norm in Section~\ref{SEC:K53-L2norm} and the clique-counting objective in Section~\ref{SEC:K53-count-cliques}. 
This rules out all bad edges. The remaining conclusion then follows from extremality and from the fact that the complete bipartite construction is maximized by a balanced partition.


\section{Tur\'{a}n-type theorems in vertex-colored graphs}\label{SEC:Colored-Turan}
In this section, we first state and then prove several Tur\'{a}n-type results for vertex-colored graphs. 
Let $G$ be a graph on vertex set $V$, and let $V \coloneqq V_1 \cup \cdots \cup V_s$ be a fixed partition of $V$ (equivalently, an $s$-coloring of the vertices of $G$). 
We recall that, for integers $1 \le q \le s$ and $t \ge 1$, a clique $K \subseteq G$ is called a $(q,t)$-balanced clique if there exists an index set $I\subseteq [s]$ with $|I|=q$ such that 
\begin{align*}
|V(K)\cap V_i| = t \quad\text{for every } i\in I,
    \quad\text{and}\quad
    K \cong K_{qt}. 
\end{align*}
When $q = s$, we simply call $K$ a balanced $K_{st}$.

Throughout this section, we assume that
\begin{align*}
    |V_i|=n \quad\text{for all } i \in [s].
\end{align*}
When cyclic notation is used, the relevant indices are taken modulo the specified range.

For a vertex set $S \subseteq V$, we use $G[S]$ to denote the subgraph of $G$ induced by $S$. 
For pairwise disjoint sets $S_1, \ldots, S_k \subseteq V$, we write $G[S_1, \ldots, S_k]$ for the induced $k$-partite subgraph of $G$ with parts $S_1, \ldots, S_k$. 

\subsection{Extremal results}\label{SUBSEC:Colored-Turan-Extremal}

In this subsection, we state the vertex-colored extremal results used later. 
We first give an edge bound for graphs with no balanced $K_{st}$, then an $\ell_2$-norm bound in the case $t=2$, and finally a sharp bound on crossing triangles in the two-colored balanced $K_4$-free case. 

\subsubsection{Edge Bound}\label{SUBSUBSEC:Colored-Turan-edge}

We first introduce the extremal construction.
Let $V_1 = S_1 \cup \cdots \cup S_{t-1}$ be a partition of the set $V_1$ such that the sizes of $S_i$ differ by at most one.
We define $\Lambda[S_1, \ldots, S_{t-1}; V_{2},\ldots, V_{s}]$ to be the graph on the vertex set $V$ with edge set
\begin{align*}
    \Lambda[S_1, \ldots, S_{t-1}; V_{2},\ldots, V_{s}] \coloneqq \binom{V}{2} \setminus \bigcup_{i\in [t-1]}\binom{S_i}{2}.
\end{align*}
Observe that the graph $\Lambda$ is balanced $K_{st}$-free, as the part $V_1$ does not contain a clique of size $t$.
A straightforward calculation gives
\begin{align*}
    |\Lambda[S_1, \ldots, S_{t-1}; V_{2},\ldots, V_{s}]|
     = \binom{sn}{2} - \binom{n}{2} + t_{t-1}(n) 
     = \frac{(sn)^2}{2} - \frac{n^2}{2(t-1)} + o(n^{2}),
\end{align*}
where $t_{t-1}(n)$ denotes the number of edges in the balanced $(t-1)$-partite Tur\'{a}n graph on $n$ vertices.

\begin{theorem}\label{THM:s-colored-Kst-L1-reduced}
    Let $s,t\ge 2$. If $G$ is balanced $K_{st}$-free, then
    \begin{align*}
        |G| \le  \frac{(sn)^2}{2}-\frac{n^2}{2(t-1)} + s^2 n.
    \end{align*}
\end{theorem}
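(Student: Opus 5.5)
Write $H\coloneqq \binom{V}{2}\setminus G$ for the graph of missing pairs. Since $\binom{sn}{2}=\frac{(sn)^2}{2}-\frac{sn}{2}$, the theorem is equivalent to
\begin{align*}
    |H| \;\ge\; \frac{n^2}{2(t-1)} - s^2 n - \frac{sn}{2}
\end{align*}
for every $H$ that meets every balanced $K_{st}$, i.e.\ every vertex set $T_1\cup\cdots\cup T_s$ with $T_i\in\binom{V_i}{t}$ spans at least one $H$-edge. My plan is induction on $n$, in the spirit of the classical proof of Tur\'an's theorem: delete a well-chosen set of vertices and charge the loss to their $H$-degrees. For $n\le t$ (or any $n\le C(s,t)$) the claim is trivial, because the right-hand side is negative once $s^2 n\ge n^2/(2(t-1))$, i.e.\ for $n\le 2(t-1)s^2$. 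So I assume $n>2(t-1)s^2$ and that the statement holds for $n-1$.

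The inductive step removes one vertex $v_j$ from each $V_j$, keeping the parts of equal size $n-1$. The remaining graph is still balanced $K_{st}$-free, so by induction it misses at least $\frac{(n-1)^2}{2(t-1)}-s^2(n-1)-\frac{s(n-1)}{2}$ pairs. It therefore suffices to find $v_1,\dots,v_s$ whose $H$-edges (counted without double counting inside $\{v_1,\dots,v_s\}$) number at least
\begin{align*}
    \frac{2n-1}{2(t-1)} - s^2 - \frac{s}{2}.
\end{align*}
So the whole matter is to show that some transversal $\{v_1,\dots,v_s\}$ has total $H$-degree at least about $n/(t-1)-O(s^2)$. The $s^2$ slack absorbs the at most $\binom{s}{2}$ double-counted pairs inside the transversal.

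To find such a transversal, I use $K_{st}$-freeness through a near-extremal clique. Suppose every transversal has total $H$-degree below $n/(t-1)-s^2$. Then in particular every vertex has $H$-degree below $n/(t-1)$. I then build a clique greedily: choose $T_1,\dots,T_{s-1}$ and then $T_s$, always picking the next vertex among the common $G$-neighbours of the vertices chosen so far. Each chosen vertex forbids fewer than $n/(t-1)$ vertices, so after choosing $k\le st$ vertices, at least $n-k\,n/(t-1)$ candidates remain in each part. This crude estimate kills the argument as soon as $k\ge t-1$, which is exactly the Tur\'an threshold. So I refine it using the structure of $G[V_j]$. Apply Tur\'an's theorem inside each part: if the current candidate set $W_j\subseteq V_j$ of size $m$ spans fewer than $m^2/(2(t-1))-m/2$ $H$-edges, then $G[W_j]$ contains a $K_t$. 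The key step is to choose the $T_i$ so that their $H$-neighbourhoods in the later parts are small on average. For this I pick the next $K_t$ uniformly at random among the copies of $K_t$ in $G[W_i]$, using supersaturation to guarantee there are $\Omega(m^t)$ of them, and then bound the expected number of $H$-edges from the new $T_i$ into the later $W_j$ by the average $H$-degree. That average is $2|H|/(sn)$ per vertex, which is below $n/((t-1)s)$ under the contrary assumption. This keeps every later $W_j$ of size $(1-o(1))n$ while $|H[W_j]|<|W_j|^2/(2(t-1))$, and so it produces a balanced $K_{st}$, a contradiction.

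The main obstacle is this last step: controlling simultaneously the within-part deficiency (which is what forces the $\frac{1}{t-1}$ constant) and the cross-part missing edges, while getting an additive error of only $O(s^2 n)$ rather than $o(n^2)$. If the random-clique averaging cannot be made to lose only $O(s^2)$ per step, my fallback is to run the induction on one part at a time. I would split $|H|=\sum_j |H[V_j]|+|H_{\mathrm{cross}}|$ and handle each part separately. Either $G[V_j]$ is $K_t$-free, so $|H[V_j]|\ge \binom{n}{2}-t_{t-1}(n)$ and we are done. Or it contains a $K_t$, in which case I fix one and pass to the common $G$-neighbourhoods in the remaining parts, apply the statement for $s-1$ colours there, and pay for the deleted vertices with the cross $H$-edges incident to the fixed $K_t$. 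This gives an induction on $s$ whose error terms accumulate to at most $s^2 n$.
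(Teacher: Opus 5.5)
Your reduction is sound. Deleting one vertex per part and inducting on $n$, it suffices to find a transversal with $\sum_i d_H(v_i)\ge \frac{n}{t-1}-\frac{s^2}{2}-s$, and small $n$ are trivial. The gap is that this transversal statement carries essentially the whole content of the theorem, and your argument for it does not work.

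\textbf{The random-clique step.}
\begin{itemize}
\item Your claim that a random $K_t$ ``keeps every later $W_j$ of size $(1-o(1))n$'' is false in general. The contrary hypothesis only bounds $d_H(v)$ by $M_i=\max_{u\in V_i}d_H(u)$, which is linear in $n$, and these non-neighbours may lie in other parts. For example, take $s=2$, $t=3$, both parts complete, and a quasirandom cross graph of density $4/5$. Then every $d_H(v)\approx n/5$, so your hypothesis holds, yet a typical triangle in $V_1$ has only about $0.51n$ common neighbours in $V_2$.
\item The global average $2|H|/(sn)$ is the wrong quantity. Cross losses are governed by per-part quantities, and the vertices of a random copy of $K_t$ are not uniformly distributed anyway.
\item Once $W_s$ has shrunk by a linear amount, a $K_t$ in $G[W_s]$ needs $|H[W_s]|<\binom{|W_s|}{2}-t_{t-1}(|W_s|)$. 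Nothing in your sketch controls the $H$-density of the surviving set, and that density can increase under restriction.
\item Supersaturation and averaging arguments of this kind lose $\varepsilon n$ in the degree sum. Your induction tolerates only an additive $O(s^2)$ per step, and a linear loss per step accumulates to $\Theta(n^2)$.
\end{itemize}

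\textbf{The fallback.} It fails for a related reason. Passing to the common neighbourhoods of a fixed $K_t$ can delete $\Theta(n)$ vertices from each other part. This lowers the $(s-1)$-colour bound $\frac{m^2}{2(t-1)}$ by roughly $\frac{n}{t-1}$ per deleted vertex. The cross $H$-edges at the fixed $K_t$ pay only $O(1)$ per deleted vertex.

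\textbf{What is missing.} You need a structural reduction that makes exact, linear-error bookkeeping possible. The paper's route is as follows.
\begin{itemize}
\item Since only an upper bound on $|G|$ is wanted, pass to a locally symmetrized graph. Zykov symmetrization inside parts keeps balanced $K_{st}$-freeness and does not decrease $|G|$. Afterwards, in each part non-adjacency is an equivalence relation and distinct classes are completely joined.
\item If some part has fewer than $t$ classes, then $|H[V_i]|\ge\binom n2-t_{t-1}(n)$ and you are done.
\item Otherwise, take a $(\tau,t)$-balanced clique $S$ with $\tau$ maximal, subject to each unused part's common neighbourhood of $S$ containing at least $t$ classes. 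Maximality yields a fixed-point-free map on the unused parts, hence a cycle $1\to2\to\cdots\to r\to1$ with $r\ge2$.
\item In each unused part, pick representatives of the $t$ largest classes of $N_i(S)$. The class-size terms telescope around the cycle, so these together with $S$ give $st$ vertices of total degree at most $s^2tn-rn$.
\item Hence some transversal has $\sum_i d_G(v_i)\le s^2n-2n/t$, that is, $\sum_i d_H(v_i)\ge 2n/t-s\ge\frac{n}{t-1}-s$.
\end{itemize}
This is exactly what your inductive step needs. The paper uses this same cycle argument, combined with a cloning argument on an edge-maximal graph in place of your deletion induction.
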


\begin{corollary}\label{COR:2-colored-K4-L1-reduced}
    Suppose that $t = s = 2$ and $|V_i|= n$ for $i \in [2]$.
    If $G$ is balanced $K_4$-free, then
    \begin{align*}
        |G| \le  \frac{3n^2}{2} + 4n.
    \end{align*}
\end{corollary}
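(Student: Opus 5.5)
This is a direct specialization of Theorem~\ref{THM:s-colored-Kst-L1-reduced}. I would take $s=t=2$ in that theorem. With these values, a balanced $K_{st}$ is a copy of $K_4$ meeting each of $V_1,V_2$ in exactly two vertices, so the hypothesis of the corollary is exactly the hypothesis of the theorem with $|V_1|=|V_2|=n$.

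Substituting into the bound gives
\begin{align*}
    |G| \le \frac{(2n)^2}{2} - \frac{n^2}{2(2-1)} + 2^2 n = 2n^2 - \frac{n^2}{2} + 4n = \frac{3n^2}{2} + 4n,
\end{align*}
which is the claimed inequality.

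The only thing to check is that the standing assumptions of Section~\ref{SEC:Colored-Turan} are in force, namely $|V_i|=n$ for all $i\in[s]$ and the definition of a balanced clique with $q=s$. Both are stated explicitly in the corollary, so I expect no real obstacle: all of the difficulty lies in Theorem~\ref{THM:s-colored-Kst-L1-reduced}, which we are allowed to assume.

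As a sanity check on sharpness, in the construction $\Lambda$ with $t-1=1$ we have $S_1=V_1$. Then $\Lambda$ is $\binom{V}{2}\setminus\binom{V_1}{2}$, which has $\binom{2n}{2}-\binom{n}{2}=\tfrac{3n^2}{2}-\tfrac{n}{2}$ edges. This matches the main term, so the corollary is tight up to the linear error.
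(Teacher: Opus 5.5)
Your proposal is correct and matches the paper, which gives no separate proof and obtains this corollary directly from Theorem~\ref{THM:s-colored-Kst-L1-reduced} with $s=t=2$. Your arithmetic $\frac{(2n)^2}{2}-\frac{n^2}{2}+4n=\frac{3n^2}{2}+4n$ is right, and so is your sharpness check via $\binom{2n}{2}-\binom{n}{2}=\frac{3n^2}{2}-\frac{n}{2}$.
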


\subsubsection{\texorpdfstring{$\ell_2$}{l2}-Norm Bound}\label{SUBSUBSEC:Colored-Turan-L2}

We now turn to the $\ell_2$-norm. In the case $t=2$, the graph $\Lambda[V_{1}; V_{2},\ldots, V_{s}]$ has $V_1$ independent.
The subgraph induced by \(V_2\cup\cdots\cup V_s\) is complete, and all edges between \(V_1\) and \(V_2\cup\cdots\cup V_s\) are present. Hence
\begin{align*}
    \norm{\Lambda[V_{1}; V_{2},\ldots, V_{s}]}_{2} = (s-1)n \cdot (sn-1)^{2} + n \cdot (sn-n)^{2}.
\end{align*}
This leads to the following upper bound for balanced $K_{2s}$-free graphs.

\begin{theorem}\label{THM:s-colored-K2s-L2-norm-reduced}
    Let $s\ge 2$. If $G$ is balanced $K_{2s}$-free, then 
    \begin{align*}
        \norm{G}_{2} \le (s^{2}(s-1) + (s-1)^{2}) n^{3} + 4s^{3}n^{2}. 
    \end{align*}
\end{theorem}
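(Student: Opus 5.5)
The plan is to induct on $n$, deleting one vertex from each class $V_1,\dots,V_s$ at each step. Write $F(n)\coloneqq (s^2(s-1)+(s-1)^2)n^3$. The target bound is $F(n)+4s^3n^2$, and the error term $4s^3n^2$ is there to absorb the lower-order terms of the induction. The base case is $n\le n_1$ for a constant $n_1=n_1(s)$, where the trivial bound $\norm{G}_2\le sn(sn)^2$ already suffices once $4s^3n^2$ dominates.

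For the induction step, take a transversal $T=\{v_1,\dots,v_s\}$ with $v_i\in V_i$ and put $G'=G-T$. Then $G'$ is balanced $K_{2s}$-free with $|V_i\setminus T|=n-1$, so the induction hypothesis bounds $\norm{G'}_2$. Expanding $\sum_v d(v)^2$ gives
\begin{align*}
\norm{G}_2-\norm{G'}_2 \le \sum_{v\in T} d(v)^2 + \sum_{v\notin T}\bigl(2d_{G'}(v)\,d_T(v)+d_T(v)^2\bigr).
\end{align*}
So it suffices to find a transversal $T$ for which the right-hand side is at most $F(n)-F(n-1)+O(s^3 n)$, that is, about $3(s-1)(s^2+s-1)n^2$. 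Since $d_{G'}(v)\le sn$, the second sum is at most $2sn\,|\{(u,v): u\in T,\ uv\in G\}|+O(s^2n)$. Hence the task reduces to choosing $T$ so that $\sum_{u\in T}\bigl(d(u)^2+2sn\,d(u)\bigr)$ is small. This quantity is increasing in each $d(u)$, so I want every vertex of $T$ to have small degree, and the degree sum over $T$ should be at most that of a transversal of the extremal graph $\Lambda[V_1;V_2,\dots,V_s]$, namely $(s-1)(sn)+(s-1)n$.

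To find such a $T$, I will first use Theorem~\ref{THM:s-colored-Kst-L1-reduced} with $t=2$: it gives $|G|\le (sn)^2/2-n^2/2+s^2n$, so the average degree is at most $sn-n/s+O(s)$. Averaging alone is not enough, because an $\ell_2$-type quantity needs the deficits to land in the right places. I will therefore split into two cases.

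\emph{Case A: some class $V_i$ has many low-degree vertices.} Suppose some $V_i$ contains a vertex $u$ with $d(u)\le (s-1)n+O(1)$. I take $v_i=u$, and for every other $j\neq i$ I take $v_j\in V_j$ of minimum degree, which is trivially at most $sn-1$. This matches the extremal transversal up to $O(n)$ in the degree sum, which is what the bound requires.

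\emph{Case B: every class contains only vertices of degree at least $(s-1)n+\Omega(1)$.} Then every vertex has internal degree at least $1$ inside its own class, up to missing cross edges. Here I will exploit balanced $K_{2s}$-freeness more directly. Pick an edge $x_iy_i$ inside each $V_i$, chosen greedily inside common neighbourhoods of the previously chosen vertices. Since the union of these edges can never be a clique, missing pairs must accumulate, and this forces some vertex to have total deficit at least about $n$. A double-counting argument over the choices of internal edges, weighted by degrees, should then give a transversal with $\sum_{u\in T} d(u)\le s(sn)-n+O(s^2)$. That is the same degree sum as in Case A, but now spread differently across the classes.

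The main obstacle is Case B. There the deficit $n$ might be spread over several classes rather than concentrated on one vertex. Convexity of $x\mapsto x^2$ favours concentrated deficits, so spreading could in principle give a larger $\sum_{u\in T} d(u)^2$ than the extremal transversal. My first attempt is to use the linear term $2sn\,d(u)$, which dominates $d(u)^2$ up to a constant factor. With that, only the degree sum over $T$ matters up to an $O(n)$ loss per vertex, and that loss is absorbed by the slack $4s^3n^2$ carried through the induction. If this linearisation fails, the fallback is to obtain exact minimum-degree control via Theorem~\ref{THM:s-colored-Kst-L1-reduced} applied to $G-T$ for well-chosen $T$.
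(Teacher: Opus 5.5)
Your plan has a genuine gap, and it sits in the reduction step, before Cases A and B are reached. Bounding $d_{G'}(v)\le sn$ turns the increment into $\sum_{u\in T}\bigl(d(u)^2+2sn\,d(u)\bigr)$, and this is too weak even for $\Lambda[V_1;V_2,\dots,V_s]$ itself. There, every transversal has one vertex of degree $(s-1)n$ and $s-1$ vertices of degree $sn-1$. So your quantity equals $(s-1)(3s^2+3s-1)n^2+O(n)$, whereas $F(n)-F(n-1)=(s-1)(3s^2+3s-3)n^2+O(n)$.

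The excess $2(s-1)n^2$ comes from the roughly $n$ vertices of $V_1$. Each is adjacent to $s-1$ vertices of $T$ but has degree $(s-1)n$, not $sn$. An overshoot of order $n^2$ per step accumulates to about $\tfrac{2(s-1)}{3}n^3$ over the induction, which the $4s^3n^2$ term cannot absorb. So making $T$ match the extremal degree profile is not enough. In the extremal graph all transversals look alike, so no choice of $T$ in Case A or Case B rescues the induction step as you set it up.

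The missing idea is to bound the neighbour-degree term globally rather than pointwise. Suppose some vertex has $d(v_1)<(s-1)n$. Keep your estimate $d(v_1)^2+2sn\,d(v_1)\le (s-1)^2n^2+2s(s-1)n^2$ only for $v_1$. For the other $s-1$ deleted vertices, use $\sum_{u\in N(v_i)}2d(u)\le 4|G|\le 2(s^2-1)n^2+4s^2n$, which follows from Theorem~\ref{THM:s-colored-Kst-L1-reduced}. This saves exactly $2n^2$ per vertex compared with $2sn\cdot sn$. The total increment becomes $(3s^3-6s+3)n^2+4s^3n$, which closes the induction; this is what the paper does.

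If instead every degree is at least $(s-1)n$, no induction is needed. Write $\delta_v=sn-d(v)\in[0,n]$. The edge bound gives $\sum_v\delta_v\ge n(n-2s^2)$, and $\sum_v\delta_v^2\le n\sum_v\delta_v$ yields $\norm{G}_2\le s^3n^3-(2s-1)n\sum_v\delta_v$, which gives the claimed bound.

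Your convexity concern in Case B is also reversed. For a fixed degree sum, spreading the deficits decreases $\sum d(v)^2$. The only dangerous configuration is a deficit larger than $n$ on a single vertex, and the Case B hypothesis excludes exactly that. So Case B is the easy case, and the greedy internal-edge argument you sketch there, which you do not carry out, is unnecessary.
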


In the case $s=2$, we obtain the following corollary.

\begin{corollary}\label{COR:2-colored-K4-L2-norm-reduced}
    Suppose that $s = 2$ and $|V_i|= n$ for $i \in [2]$.
    If $G$ is balanced $K_4$-free, then 
    \begin{align*}
        \norm{G}_{2} \le 5n^3 + 32n^2. 
    \end{align*}
\end{corollary}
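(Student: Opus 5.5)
The statement is Corollary~\ref{COR:2-colored-K4-L2-norm-reduced}. I would obtain it directly from Theorem~\ref{THM:s-colored-K2s-L2-norm-reduced} by specializing to $s=2$. A balanced $K_4$ with respect to the partition $V=V_1\cup V_2$ is exactly a balanced $K_{2s}$ with $s=2$: a copy of $K_4$ with two vertices in each $V_i$. Hence a balanced $K_4$-free graph $G$ with $|V_1|=|V_2|=n$ satisfies the hypothesis of Theorem~\ref{THM:s-colored-K2s-L2-norm-reduced} with $s=2$.

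The only remaining step is arithmetic. The coefficient of $n^3$ is
\begin{align*}
    s^{2}(s-1)+(s-1)^{2} = 4\cdot 1 + 1 = 5,
\end{align*}
and the lower-order term is $4s^{3}n^{2}=32n^{2}$. Together these give $\norm{G}_2\le 5n^3+32n^2$, as claimed.

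As a sanity check, the leading term agrees with the construction $\Lambda[V_1;V_2]$. Its norm is $n(2n-1)^2+n\cdot n^2 = 5n^3+O(n^2)$. So the bound is tight up to the $O(n^2)$ error, consistent with the $5/8$ density.

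There is no genuine obstacle here. All the difficulty lies in Theorem~\ref{THM:s-colored-K2s-L2-norm-reduced}, which we are allowed to assume. The only point to verify is that the paper's notion of ``balanced $K_{st}$'' with $q=s$ and $t=2$ coincides with the ``balanced $K_4$'' of the corollary, which holds by definition.
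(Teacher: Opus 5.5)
Your proposal is correct and matches the paper, which obtains this corollary as the $s=2$ case of Theorem~\ref{THM:s-colored-K2s-L2-norm-reduced}; your arithmetic $s^2(s-1)+(s-1)^2=5$ and $4s^3=32$ is right. Your sanity check is also accurate: $\norm{\Lambda[V_1;V_2]}_2 = 5n^3-4n^2+n$, so the leading term is sharp.
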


\subsubsection{Crossing Triangles}\label{SUBSUBSEC:Colored-Turan-Crossing-Triangles}

Finally, we consider crossing triangles in the case $s=2$. Let $\mathcal{N}_{\mathrm{cr}}(K_{3}, G)$ denote the number of crossing triangles in $G$, that is, triangles containing vertices from both $V_1$ and $V_2$. For the graph $\Lambda[V_{1}; V_{2}]$, we have
\begin{align*}
    \mathcal{N}_{\mathrm{cr}}(K_{3}, \Lambda) = n \binom{n}{2}.
\end{align*}
The next theorem shows that this is best possible, and the following proposition gives the corresponding stability statement.

\begin{theorem}\label{THM:2-colored-K4-triangle-reduced}
    Suppose that $s = 2$, $n\ge 3$, and $|V_i|= n$ for $i \in [2]$.
    If $G$ is a balanced $K_{4}$-free graph, then
    \begin{align*}
        \mathcal{N}_{\mathrm{cr}}(K_{3}, G)\le n\binom{n}{2}.
    \end{align*}
\end{theorem}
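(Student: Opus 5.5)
The bound follows from a double count over balanced $4$-sets, i.e.\ sets $Q=\{u,v,x,y\}$ with $u,v\in V_1$ and $x,y\in V_2$. There are exactly $\binom{n}{2}^2$ such sets. The plan is to bound the number of crossing triangles inside each $Q$, and then count how many balanced $4$-sets contain a given crossing triangle.

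\emph{Local bound.} A graph on four vertices that is not $K_4$ contains at most $2$ triangles: each triangle in a $4$-vertex graph is the complement of one vertex, and deleting any single edge destroys exactly two of the four triangles. Since $G$ is balanced $K_4$-free, $G[Q]\not\cong K_4$ for every balanced $4$-set $Q$. Hence $G[Q]$ contains at most $2$ triangles, and in particular at most $2$ crossing triangles.

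\emph{Global count.} Every crossing triangle has two vertices in one part and one in the other. Suppose it is $\{u,v,w\}$ with $u,v\in V_1$ and $w\in V_2$. Then it lies in exactly the balanced $4$-sets $\{u,v,w,y\}$ with $y\in V_2\setminus\{w\}$, so in exactly $n-1$ of them. Symmetrically, a crossing triangle with two vertices in $V_2$ also lies in exactly $n-1$ balanced $4$-sets. Every triangle inside a balanced $4$-set is automatically crossing, since it uses three of the four vertices and hence meets both parts. Double counting pairs (crossing triangle, balanced $4$-set containing it) therefore gives
\begin{align*}
    (n-1)\,\mathcal{N}_{\mathrm{cr}}(K_3,G) \le 2\binom{n}{2}^2 .
\end{align*}
Dividing by $n-1\ge 1$ yields
\begin{align*}
    \mathcal{N}_{\mathrm{cr}}(K_3,G)\le \frac{2}{n-1}\binom{n}{2}^2 = n\binom{n}{2},
\end{align*}
which is the claimed bound.

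The argument has no real obstacle; the only point needing care is the local bound, namely that the balanced-$K_4$-free hypothesis applies to every balanced $4$-set. This also indicates the route to the companion stability statement. Equality forces every balanced $4$-set to span exactly two triangles, i.e.\ to induce $K_4$ minus an edge. From this one would analyse which edge is missing in each $4$-set, in order to recover a structure like $\Lambda[V_1;V_2]$ or its mirror image.
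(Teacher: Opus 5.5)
Your proof is correct, and it takes a genuinely different and much shorter route than the paper. You average over the $\binom{n}{2}^2$ balanced $4$-sets, and each step holds:
\begin{itemize}
\item each balanced $4$-set spans at most two triangles, because it is not a clique and a missing pair kills the two triples through it;
\item every triangle inside a balanced $4$-set is crossing;
\item every crossing triangle lies in exactly $n-1$ balanced $4$-sets.
\end{itemize}
Hence $(n-1)\mathcal{N}_{\mathrm{cr}}(K_3,G)\le 2\binom{n}{2}^2=(n-1)\,n\binom{n}{2}$.

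The paper's proof works differently. It first applies Zykov-type local symmetrization (Proposition~\ref{PROP:Zykov-symmetry-triangle-ineq}), reducing to the case where each $G[V_i]$ is complete multipartite on its equivalence classes. It then encodes balanced-$K_4$-freeness as $K_{2,2}$-freeness of the auxiliary class graph $H$ (Lemma~\ref{LEM:crossing-triangle-class-graph}). Since pairs of classes have at most one common neighbour in $H$, it obtains $T_{2,1}\le b\bigl(\binom{n}{2}-\binom{a}{2}\bigr)$ and $T_{1,2}\le a\bigl(\binom{n}{2}-\binom{b}{2}\bigr)$, where $a,b$ are the largest class sizes. It finishes with an explicit algebraic identity in $a$ and $b$.

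That route fits the symmetrization framework the paper also uses for the edge and $\ell_2$ bounds. For this statement, however, it gives no extra leverage. Its deficit bound concerns the symmetrized graph rather than $G$, and the paper's stability result (Proposition~\ref{PROP:2-colored-K4-triangle-stability-sym}) is proved independently via independence numbers.

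Your averaging argument needs no symmetrization and works for all $n\ge 2$. It also yields the exact identity
\[
n\binom{n}{2}-\mathcal{N}_{\mathrm{cr}}(K_3,G)=\frac{1}{n-1}\sum_{Q}\bigl(2-t(Q)\bigr),
\]
where $Q$ ranges over balanced $4$-sets and $t(Q)$ is the number of triangles in $G[Q]$. Since every summand is nonnegative, the extremal deficit is exactly the average local defect. As you note, this is a natural starting point for a stability argument.
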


\begin{proposition}\label{PROP:2-colored-K4-triangle-stability-sym}
    For every $\eta > 0$, there exist $\varepsilon_{\ref{PROP:2-colored-K4-triangle-stability-sym}} \coloneqq \varepsilon_{\ref{PROP:2-colored-K4-triangle-stability-sym}}(\eta) > 0$ and $N_{\ref{PROP:2-colored-K4-triangle-stability-sym}} \coloneqq N_{\ref{PROP:2-colored-K4-triangle-stability-sym}}(\eta)$ such that the following holds for all $n \ge N_{\ref{PROP:2-colored-K4-triangle-stability-sym}}$.
    Suppose that $G$ is a balanced $K_4$-free graph on the partition $V_1 \cup V_2$ with $|V_1|=|V_2|=n$, and
    \begin{align*}
        \mathcal{N}_{\mathrm{cr}}(K_{3}, G)\ge n\binom{n}{2}-\varepsilon_{\ref{PROP:2-colored-K4-triangle-stability-sym}}n^3.
    \end{align*}
    Then $G$ can be transformed into either $\Lambda[V_1;V_2]$ or $\Lambda[V_2;V_1]$ by changing at most $\eta n^2$ edges.
\end{proposition}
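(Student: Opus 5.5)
We prove the stability statement by a compactness/removal route built on Theorem~\ref{THM:2-colored-K4-triangle-reduced}, supplemented by a direct structural analysis. The starting point is a double count of crossing triangles. Every crossing triangle has the form $\{x,y,z\}$ with either two vertices in $V_1$ and one in $V_2$, or the reverse. For a pair $xy$ inside $V_1$ (an edge), let $d_2(xy)$ be the number of common neighbours of $x,y$ in $V_2$. Then
\begin{align*}
\mathcal N_{\mathrm{cr}}(K_3,G)=\sum_{xy\in G[V_1]} d_2(xy)+\sum_{uv\in G[V_2]} d_1(uv).
\end{align*}
The balanced $K_4$-free condition says that for every edge $xy$ in $V_1$, the common neighbourhood $N_2(xy)\subseteq V_2$ is an independent set in $G[V_2]$. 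Symmetrically, $N_1(uv)$ is independent in $G[V_1]$. The extremal configuration $\Lambda[V_1;V_2]$ attains $n\binom n2$ with $V_2$ complete, $V_1$ empty, and all cross edges present.

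The first step is to extract a coarse dichotomy. Let $e_i=|G[V_i]|$ and let $\alpha_i$ be the independence number of $G[V_i]$. Then $d_2(xy)\le \alpha_2$, so
\begin{align*}
\mathcal N_{\mathrm{cr}}\le e_1\alpha_2+e_2\alpha_1.
\end{align*}
Since $G[V_i]$ has an independent set of size $\alpha_i$, we get $e_i\le \binom n2-\binom{\alpha_i}2$. A short optimization of
\begin{align*}
\bigl(\tbinom n2-\tbinom{\alpha_1}2\bigr)\alpha_2+\bigl(\tbinom n2-\tbinom{\alpha_2}2\bigr)\alpha_1
\end{align*}
over $\alpha_1,\alpha_2\in[1,n]$ should show that it is at most $n\binom n2$. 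I expect this to be the essence of the proof of Theorem~\ref{THM:2-colored-K4-triangle-reduced}, though the estimate $d_2\le\alpha_2$ alone is not quite tight, so a refinement may be needed. Near the maximum, the only near-optimal points are $(\alpha_1,\alpha_2)\approx(n,o(n))$ or $(o(n),n)$, up to $O(\sqrt{\varepsilon})$ perturbations. I will verify this with a continuous version in the variables $a_i=\alpha_i/n$, where the objective becomes
\begin{align*}
\tfrac12\bigl(a_2(1-a_1^2)+a_1(1-a_2^2)\bigr)n^3
\end{align*}
and we need its maximum $\tfrac12$ to be attained only at $(1,0),(0,1)$. However, at $a_1=a_2=1/\sqrt3$ the objective is about $0.385<0.5$, and the boundary $a_1=1$ gives $a_2(1-1)+1-a_2^2$, which is maximized at $a_2=0$. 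So uniqueness holds, and compactness gives an explicit $\varepsilon(\eta)$.

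Assume without loss of generality that $\alpha_1\ge(1-\delta)n$ and $\alpha_2\le\delta n$. The near-equality must then be nearly tight in each inequality used. Tightness of $e_1\le\binom n2-\binom{\alpha_1}2$ forces $e_1=O(\delta n^2)$. The contribution of the first sum is then at most $e_1\alpha_2=O(\delta^2n^3)$. Hence
\begin{align*}
\sum_{uv\in G[V_2]} d_1(uv)\ge n\binom n2-\varepsilon n^3-O(\delta^2 n^3).
\end{align*}
Since $d_1(uv)\le n$ and $e_2\le\binom n2$, this forces $e_2\ge\binom n2-O(\varepsilon+\delta^2)n^2$. It also forces almost every pair in $V_2$ to have almost all of $V_1$ as common neighbours, so all but $O(\cdot)n^2$ cross pairs are edges. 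Summing the three discrepancies — edges in $V_1$, non-edges in $V_2$, and non-edges across — gives at most $\eta n^2$ changes once $\varepsilon$ (and hence $\delta$) is small. This yields $\Lambda[V_1;V_2]$.

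The main obstacle is the first step: proving that the bound $e_1\alpha_2+e_2\alpha_1$, or whatever sharper inequality underlies Theorem~\ref{THM:2-colored-K4-triangle-reduced}, is genuinely stable. The crude $\alpha$-bound might lose too much in mixed regimes. If it does, I would instead apply the graph removal lemma to pass to a large-$n$ limit object (a graphon pair) with zero balanced $K_4$ density, rerun the exact argument there to characterize equality as $\Lambda$ only, and then transfer back via compactness. This is the standard stability-from-exactness route.
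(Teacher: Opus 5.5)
Your proposal is correct and is essentially the paper's proof. The paper uses exactly the bounds $\mathcal{N}_{\mathrm{cr}}(K_3,G)\le e_1\alpha_2+e_2\alpha_1$ and $e_i\le\binom n2-\binom{\alpha_i}2$ and reduces them to $(x+y)(1-xy)\ge 1-3\varepsilon$. It then gets $x\ge 1-O(\varepsilon^{1/2})$ and $y=O(\varepsilon^{1/2})$ explicitly through $S=x+y$, $P=xy$ rather than by compactness, and cleans up as you describe.

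Your worry about mixed regimes, and the graphon fallback, are unnecessary: your own computation already shows the crude bound has maximum $n^3/2$, attained only at $(1,0)$ and $(0,1)$. The one step you only sketch is bounding the missing cross edges. The paper does this by weighting each missing cross pair $ab$ by $d_{G[V_2]}(b)$, noting that each absent $V_1V_2V_2$ triangle is charged at most twice, and discarding the few $b$ of low degree in $G[V_2]$.
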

\subsection{Local Symmetrization}\label{SUBSEC:Local-Symmetrization}
In this subsection, we record a slight variant of Zykov symmetrization~\cite{Zyk49}. It
will be used in the proofs of the edge bound and the $\ell_2$-norm bound, and it will also
provide a convenient monotonicity statement for crossing triangles.
Given nonadjacent vertices $u,v \in V$, denote by $G_{u \to v}$ the graph obtained from $G$ by
symmetrizing $u$ to $v$, that is, 
\begin{align*}
    G_{u \to v}
    \coloneqq \big( G \setminus \left\{uw \colon w\in N_{G}(u)\right\} \big) \cup \left\{ uw \colon w\in N_{G}(v) \right\}. 
\end{align*}
We say that $u$ and $v$ are equivalent, denoted $u \sim v$, if $u$ and $v$ lie in the same part $V_i$ for some $i\in [s]$ and $N_G(u) = N_G(v)$ (and hence $\{u,v\} \notin G$).
For $i\in[s]$ and $v\in V_i$, write
\begin{align*}
    [v]\coloneqq \{u\in V_i \colon u \sim v\}
\end{align*}
for the equivalence class of $v$ in $V_i$ (with respect to $\sim$ in $G$).

We say that $G$ is locally symmetrized if, for each $i\in[s]$, any two nonadjacent vertices in $V_i$ are equivalent.

Suppose that $u$ and $v$ lie in the same part and $uv\notin G$. Then there are no edges between the classes $[u]$ and $[v]$. Let $G_{[u] \to [v]}$ be the graph obtained by symmetrizing every vertex in $[u]$ to the neighborhood pattern of a vertex in $[v]$.

Each such step reduces the number of equivalence classes by one, so after finitely many steps we obtain a locally symmetrized graph. Moreover, if $G$ is balanced $K_{st}$-free, then so is $G_{[u] \to [v]}$. Indeed, suppose that $G_{[u] \to [v]}$ contains a balanced $K_{st}$, say $K$. If $K$ avoids $[u]$, then $K$ already appears in $G$, a contradiction. Hence $K$ contains a vertex from $[u]$. Since $[u]$ remains independent and has no edges to $[v]$ after the symmetrization, $K$ contains exactly one vertex from $[u]$ and no vertex from $[v]$. Replacing this vertex by $v$ gives a balanced $K_{st}$ in $G$, a contradiction.
The next two propositions show that, under a suitable such symmetrization, neither the number of edges nor the $\ell_2$-norm nor the number of crossing triangles decreases.
\begin{proposition}[{\cite[Proposition~2.1]{CILLP24}}]\label{PROP:Zykov-symmetry-ineq}
    Let $u$ and $v$ be nonadjacent vertices lying in the same part.
    \begin{align*}
        |G| \le \max\left\{|G_{[u] \to [v]}|,~|G_{[v] \to [u]}|\right\}, \quad \norm{G}_{2} \le \max\left\{\norm{G_{[u] \to [v]}}_{2},~\norm{G_{[v] \to [u]}}_2\right\}. 
    \end{align*}
\end{proposition}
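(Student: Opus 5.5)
The plan is the usual linearity argument in Zykov's method, with the extra care that $[u]$ and $[v]$ are whole classes moved at once. Write $a=|[u]|$ and $b=|[v]|$. Let $x\in[u]$ and $y\in[v]$ be fixed representatives. Let $W=V\setminus([u]\cup[v])$, and set $H=G[W]$.

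\textbf{Edge count.} Since $[u]$ and $[v]$ are independent and there are no edges between them,
\begin{align*}
|G| = |H| + a\,d_G(x) + b\,d_G(y).
\end{align*}
Here $d_G(x)=|N_G(x)\cap W|$ is common to all of $[u]$, and similarly $d_G(y)$ is common to all of $[v]$. After $G_{[u]\to[v]}$, every vertex of $[u]\cup[v]$ has neighbourhood $N_G(y)\subseteq W$. Hence $|G_{[u]\to[v]}|=|H|+(a+b)d_G(y)$, and symmetrically $|G_{[v]\to[u]}|=|H|+(a+b)d_G(x)$. The value $|G|$ is a convex combination of these two, with weights $a/(a+b)$ and $b/(a+b)$. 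So it is at most their maximum.

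\textbf{The $\ell_2$-norm.} I view $\norm{G}_2=\sum_w d_G(w)^2$ as a function of the choice of common neighbourhood for each class. For $w\in W$, write
\begin{align*}
d_G(w)=d_H(w)+a\mathbf 1[w\in N(x)]+b\mathbf 1[w\in N(y)].
\end{align*}
Introduce a parameter $\lambda\in[0,1]$: let $\lambda(a+b)$ vertices carry pattern $N(x)$ and $(1-\lambda)(a+b)$ carry pattern $N(y)$. Then
\begin{align*}
\Phi(\lambda)=\sum_{w\in W}\bigl(d_H(w)+\lambda(a+b)\mathbf 1_{N(x)}(w)+(1-\lambda)(a+b)\mathbf 1_{N(y)}(w)\bigr)^2+(a+b)\bigl(\lambda|N(x)|^2+(1-\lambda)|N(y)|^2\bigr).
\end{align*}
The first sum is a sum of squares of affine functions of $\lambda$, and the second term is affine. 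Hence $\Phi$ is convex on $[0,1]$. We have $\Phi(1)=\norm{G_{[v]\to[u]}}_2$ and $\Phi(0)=\norm{G_{[u]\to[v]}}_2$. For $\lambda_0=a/(a+b)$ the count of vertices carrying each pattern matches $G$, so $\Phi(\lambda_0)=\norm{G}_2$. Convexity then gives $\norm{G}_2\le\max\{\Phi(0),\Phi(1)\}$.

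\textbf{Main obstacle.} The only delicate point is bookkeeping. One must check that no vertex of $W$ changes its neighbourhood inside $W$, and that $[u]\cup[v]$ stays independent after the operation. Both hold because only edges incident to $[u]$ (respectively $[v]$) are rerouted, and the target neighbourhood lies in $W$. This uses $uv\notin G$ together with the class structure. Once that is verified, both claims reduce to the convexity observation above.
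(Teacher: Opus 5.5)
Your proof is correct, including the bookkeeping that $N_G(x),N_G(y)\subseteq W$ and that $[u]\cup[v]$ stays independent. The paper does not prove this proposition; it imports it from \cite{CILLP24}. The closest in-paper argument is the proof of Proposition~\ref{PROP:Zykov-symmetry-triangle-ineq}, which is purely linear: every affected crossing triangle meets $[u]$ in exactly one vertex, so the count changes by exactly $|[u]|(\tau_v-\tau_u)$, and one symmetrizes towards the class with the larger $\tau$.

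Your edge-count argument is that same linear comparison written as a convex combination, since $|G_{[u]\to[v]}|-|G|=a(d_G(y)-d_G(x))$ and $|G_{[v]\to[u]}|-|G|=b(d_G(x)-d_G(y))$. For the $\ell_2$-norm the linear argument is not available. Moving the whole class $[u]$ shifts $d(w)$ for $w\in W$ by up to $a$, and squaring produces cross terms, so the effect of moving $a$ vertices is not $a$ times the effect of moving one.

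Your interpolation $\Phi(\lambda)$ is the right replacement. In fact $\Phi$ is a quadratic with leading coefficient $(a+b)^2|N_G(x)\triangle N_G(y)|$. So with $\lambda_0=a/(a+b)$ you get the exact identity $\lambda_0\Phi(1)+(1-\lambda_0)\Phi(0)-\Phi(\lambda_0)=ab\,|N_G(x)\triangle N_G(y)|$, which shows the $\ell_2$ inequality is even strict when $[u]\neq[v]$.

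The one case you should set aside explicitly is $[u]=[v]$, that is, $u\sim v$. There your decomposition double-counts, but both symmetrizations return $G$, so the statement is trivial.
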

\begin{proposition}\label{PROP:Zykov-symmetry-triangle-ineq}
    Let $u$ and $v$ be nonadjacent vertices lying in the same part.
    \begin{align*}
        \mathcal{N}_{\mathrm{cr}}(K_{3}, G) \le \max\left\{\mathcal{N}_{\mathrm{cr}}(K_{3}, G_{[u] \to [v]}),~\mathcal{N}_{\mathrm{cr}}(K_{3}, G_{[v] \to [u]})\right\}.
    \end{align*}
\end{proposition}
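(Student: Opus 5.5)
The plan is to write $\mathcal{N}_{\mathrm{cr}}(K_3,\cdot)$ as a function that is linear in the two class sizes. Then the symmetrized count is obtained by moving all of the weight onto one of the two terms, and the better of the two moves cannot lose.

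Set $U\coloneqq[u]$ and $W\coloneqq[v]$, with $a\coloneqq|U|$ and $b\coloneqq|W|$, and let $i$ be the index with $u,v\in V_i$, so that $U\cup W\subseteq V_i$. We use three facts recorded before the statement:
\begin{itemize}
\item $U$ and $W$ are independent sets;
\item there are no edges between $U$ and $W$;
\item all vertices of $U$ share the neighborhood $N_G(u)$, and all vertices of $W$ share $N_G(v)$.
\end{itemize}
Consequently $U\cup W$ is independent, so every triangle of $G$ contains at most one vertex of $U\cup W$. Moreover $N_G(u)$ and $N_G(v)$ are disjoint from $U\cup W$.

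Let $G'\coloneqq G-(U\cup W)$. Let $C$ be the number of crossing triangles of $G'$. For a vertex $x\in V_i$ and a set $N\subseteq V(G')$, let $f(N)$ be the number of edges $yz\in G'$ with $y,z\in N$ such that $\{y,z\}\not\subseteq V_i$. Then $\{x,y,z\}$ is crossing precisely when this holds, and the condition depends only on the part $V_i$, not on $x$ itself. Classifying the crossing triangles of $G$ by whether they meet $U$, $W$, or neither gives
\begin{align*}
    \mathcal{N}_{\mathrm{cr}}(K_3,G)=C+a\,f(N_G(u))+b\,f(N_G(v)).
\end{align*}

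Now consider $G_{[u]\to[v]}$. Here every vertex of $U$ receives the neighborhood $N_G(v)$, which is disjoint from $U\cup W$. Hence $U\cup W$ remains independent, and the graph $G'$ is unchanged. Every vertex of $U\cup W$ still lies in $V_i$ and now has neighborhood $N_G(v)$. The same decomposition therefore applies and gives
\begin{align*}
    \mathcal{N}_{\mathrm{cr}}(K_3,G_{[u]\to[v]})=C+(a+b)f(N_G(v)).
\end{align*}
Symmetrically, $\mathcal{N}_{\mathrm{cr}}(K_3,G_{[v]\to[u]})=C+(a+b)f(N_G(u))$.

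Since $a f(N_G(u))+b f(N_G(v))\le (a+b)\max\{f(N_G(u)),f(N_G(v))\}$, the count for $G$ is at most the larger of these two values, which is the claimed inequality.

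The only point needing care is the bookkeeping that justifies the decomposition. Because $U\cup W$ is independent and no neighborhood meets it, both before and after symmetrization, no triangle uses two vertices of $U\cup W$. Also, whether a triangle is crossing is determined by the common part $V_i$ of all vertices in $U\cup W$. Once these are checked, the argument is the standard linearity argument behind Zykov symmetrization.
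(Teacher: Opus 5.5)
Your proof is correct and is essentially the paper's argument. Your $f(N_G(u))$ and $f(N_G(v))$ are exactly the per-vertex crossing-triangle counts $\tau_u,\tau_v$ used there; the paper assumes $\tau_u\le\tau_v$ and computes the difference $\mathcal{N}_{\mathrm{cr}}(K_3,G_{[u]\to[v]})-\mathcal{N}_{\mathrm{cr}}(K_3,G)=|[u]|(\tau_v-\tau_u)$ directly, rather than writing out both totals, and your explicit check that $G-([u]\cup[v])$ and the neighbourhoods of $[v]$ are unchanged by the symmetrization is a point the paper leaves implicit.
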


\begin{proof}[Proof of Proposition~\ref{PROP:Zykov-symmetry-triangle-ineq}]
    For a vertex $x\in V$, let $\tau(x)$ denote the number of crossing triangles containing $x$.
    Since vertices in the same equivalence class have the same neighborhood, $\tau(x)$ is constant on each of the classes $[u]$ and $[v]$. Let $\tau_u$ and $\tau_v$ denote these values, and assume without loss of generality that $\tau_u \le \tau_v$.
    Since there are no edges between $[u]$ and $[v]$, and since $[u]$ remains independent after the symmetrization, every crossing triangle affected by the passage from $G$ to $G_{[u]\to[v]}$ contains exactly one vertex of $[u]$. For each such vertex, the number of crossing triangles changes from $\tau_u$ to $\tau_v$. Therefore
    \begin{align*}
        \mathcal{N}_{\mathrm{cr}}(K_{3}, G_{[u] \to [v]}) - \mathcal{N}_{\mathrm{cr}}(K_{3}, G)
        = |[u]|(\tau_v-\tau_u)\ge 0.
    \end{align*}
    This proves the proposition.
\end{proof}
Therefore, in the proofs of Theorems~\ref{THM:s-colored-Kst-L1-reduced}, \ref{THM:s-colored-K2s-L2-norm-reduced}, and~\ref{THM:2-colored-K4-triangle-reduced}, we may assume that $G$ is locally symmetrized.
The following fact follows directly from the definition of locally symmetrized.
\begin{fact}\label{FACT:symmetrized-G}
    Suppose that $G$ is locally symmetrized. Then the following statements hold. 
    \begin{enumerate}[label=(\roman*), ref=(\roman*)]
        \item\label{FACT:symmetrized-G-a} Each equivalence class is an independent set in $G$.
        \item\label{FACT:symmetrized-G-b} For each $i \in [s]$ and every pair of distinct equivalence classes $[x], [y] \subseteq V_i$, the bipartite graph between $[x]$ and $[y]$ is complete. 
    \end{enumerate}
\end{fact}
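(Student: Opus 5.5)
The plan is to argue directly from the definitions of $\sim$ and of being locally symmetrized; no earlier result beyond these definitions is needed. First I will record that $\sim$ is an equivalence relation on each part $V_i$. Two vertices are related exactly when they lie in the same part and have identical neighborhoods, and both conditions are reflexive, symmetric and transitive. Hence the classes $[v]$ partition each $V_i$. Moreover, for $u,w\in[v]$ we have $u\sim w$, so $N_G(u)=N_G(w)$.

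For \ref{FACT:symmetrized-G-a}, take distinct $u,w$ in a common class. Suppose $uw\in G$. Then $w\in N_G(u)=N_G(w)$, which would make $w$ adjacent to itself; this is impossible in a simple graph. So every class is an independent set. (This is also the parenthetical remark already made in the definition of $\sim$.)

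For \ref{FACT:symmetrized-G-b}, fix $i\in[s]$ and two distinct classes $[x],[y]\subseteq V_i$. Take any $x'\in[x]$ and $y'\in[y]$. Since the classes are distinct and $\sim$ is an equivalence relation, we have $x'\not\sim y'$. But $x'$ and $y'$ lie in the same part $V_i$. If they were nonadjacent, the definition of locally symmetrized would force $x'\sim y'$, a contradiction. Therefore $x'y'\in G$ for every such pair, so $G[[x],[y]]$ is complete bipartite.

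There is no real obstacle here. The only point needing care is to make explicit that $\sim$ is transitive, so that membership in different classes genuinely means non-equivalence, and that equivalent vertices are nonadjacent.
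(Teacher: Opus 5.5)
Your proof is correct and is exactly the verification the paper intends: the paper gives no proof and just states that the fact follows directly from the definitions of $\sim$ and of being locally symmetrized. Your two steps are the ones needed. Equivalent vertices share neighborhoods and hence are nonadjacent. Two vertices in the same part lying in distinct classes are non-equivalent, so local symmetrization forces them to be adjacent.
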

\subsection{Proof of the Edge Bound}\label{SUBSEC:Proof-Edge-Bound}
\begin{proof}[Proof of Theorem~\ref{THM:s-colored-Kst-L1-reduced}]
    Let $G$ be an extremal balanced $K_{st}$-free graph with respect to the partition $(V_1,\ldots,V_s)$, where $|V_i|=n$ for all $i\in[s]$.
    Suppose to the contrary that 
    \begin{align*}
        |G| >  \binom{sn}{2} - \binom{n}{2} + t_{t-1}(n) + s^2 n.
    \end{align*}

    We first show that extremality forces every transversal choice of one vertex from each part to have large total degree.
    \begin{claim}\label{CLAIM:degree-sum-bound}
        For any choice of $s$ vertices $(v_1,\ldots,v_s) \in V_1\times \cdots \times V_s$, we have
        \begin{align*}
            \sum_{i\in[s]} d_G(v_i) > s^2 n - \frac{2n}{t}.
        \end{align*}
    \end{claim}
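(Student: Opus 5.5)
The plan is to combine the maximality of $G$ with a cloning argument: delete a transversal and put back clones of high-degree vertices. Fix $(v_1,\ldots,v_s)\in V_1\times\cdots\times V_s$ and write $D\coloneqq\sum_{i\in[s]} d_G(v_i)$. Let $G'\coloneqq G-\{v_1,\ldots,v_s\}$. Each part of $G'$ has size $n-1$, and clearly
\begin{align*}
    |G'| \ge |G| - D .
\end{align*}

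\textbf{Cloning step.} In each part $V_i\setminus\{v_i\}$ of $G'$, choose a vertex $u_i$ of maximum degree in $G'$. Build $G^\ast$ by adding new vertices $w_1,\ldots,w_s$, where $w_i$ is placed in part $i$. The adjacencies are:
\begin{itemize}
    \item $w_i$ is joined to $N_{G'}(u_i)$;
    \item $w_iw_j$ is an edge exactly when $u_iu_j\in G'$;
    \item $w_i$ is not adjacent to $u_i$.
\end{itemize}
Then $G^\ast$ again has $n$ vertices in each part. It is still balanced $K_{st}$-free, by the same replacement argument used for local symmetrization. A clique cannot contain both $w_i$ and $u_i$, since they are nonadjacent. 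So replacing every $w_i$ in a clique by $u_i$ gives a clique of $G'$ with the same colour profile. Hence maximality of $G$ gives
\begin{align*}
    |G| \ge |G^\ast| \ge |G'| + \sum_{i\in[s]} d_{G'}(u_i) - \binom{s}{2},
\end{align*}
where the $\binom{s}{2}$ term only absorbs inaccuracies from the edges among the $w_i$. The parts have size $n-1$, so the maximal choice of $u_i$ yields
\begin{align*}
    \sum_i d_{G'}(u_i) \ge \sum_i \frac{1}{n-1}\sum_{x\in V_i\setminus\{v_i\}} d_{G'}(x) = \frac{2|G'|}{n-1}.
\end{align*}

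\textbf{Solving for $D$.} Since $|G| = |G'| + D - e(\{v_1,\ldots,v_s\})$, comparing with the previous inequality gives
\begin{align*}
    D \ge \frac{2|G'|}{n-1} - s^2 \ge \frac{2(|G|-D)}{n-1} - s^2 .
\end{align*}
Rearranging,
\begin{align*}
    D \ge \frac{2|G| - s^2(n-1)}{n+1}.
\end{align*}
Now insert the contrary assumption
\begin{align*}
    |G| > \binom{sn}{2} - \binom{n}{2} + t_{t-1}(n) + s^2 n,
\end{align*}
together with $t_{t-1}(n) \ge \frac{(t-2)n^2}{2(t-1)} - O(t)$. This gives
\begin{align*}
    2|G| \ge s^2 n^2 - \frac{n^2}{t-1} + 2s^2 n - O(sn),
\end{align*}
so that
\begin{align*}
    D \ge s^2 n - \frac{n}{t-1} - O(s^2).
\end{align*}

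\textbf{Main obstacle.} The delicate part is the final bookkeeping. One has to compare $\frac{n}{t-1}$ against the target $\frac{2n}{t}$, and check that the lower-order terms fit. These are the $-s^2 n$ lost from $(n+1)$ in the denominator, the $\binom{s}{2}$, and the rounding in $t_{t-1}(n)$. They must be absorbed either by the extra $s^2 n$ in the assumption or by the gap $\frac{2n}{t}-\frac{n}{t-1} = \frac{(t-2)n}{t(t-1)}$.

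For $t \ge 3$ this gap is linear in $n$, so it absorbs everything. For $t = 2$ the gap vanishes, and one must rely on the $2s^2 n$ surplus in $2|G|$ divided by $n+1$, which contributes about $2s^2$ and beats the $-s^2$ loss. The precise expansion of $\binom{sn}{2}$ and $\binom{n}{2}$, including their $-\frac{sn}{2}$ and $+\frac{n}{2}$ linear terms, will be kept exactly in that case.

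If the constants turn out too tight, I would sharpen the cloning estimate. The approach is to use the edges $w_iw_j$ and the exact count $e(\{v_1,\ldots,v_s\}) \le \binom{s}{2}$, rather than crude $s^2$ bounds.
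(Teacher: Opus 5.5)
Your delete-and-clone argument is correct and takes a genuinely different route from the paper's. However, the sharpening you hold in reserve is necessary, not optional.

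With the crude loss, $(n+1)D\ge 2|G|-s^2(n-1)$ gives, for $t=2$ (where $t_1(n)=0$),
\[
D>(s^2-1)n+\frac{(2-s)n+s^2}{n+1}.
\]
This does not reach the target $(s^2-1)n$ whenever $s\ge 3$ and $n>s^2/(s-2)$. The surplus $2s^2$ has to pay for your $s^2$, but also for the $s^2-1$ lost in $(s^2-1)n^2/(n+1)$ and the $s-1$ coming from the linear terms of the binomials.

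The fix is that your cloning step loses nothing. We have $|G^\ast|=|G'|+\sum_i d_{G'}(u_i)+e(G'[\{u_1,\ldots,u_s\}])$, and $e(\{v_1,\ldots,v_s\})$ enters on the favourable side. Hence $D\ge \sum_i d_{G'}(u_i)+e(\{v_1,\ldots,v_s\})\ge 2(|G|-D)/(n-1)$, that is, $(n+1)D\ge 2|G|$. Combined with $2t_{t-1}(n)\ge \frac{t-2}{t-1}n^2-n$, the contrary assumption then gives
\[
(n+1)\Bigl(D-s^2n+\frac{2n}{t}\Bigr)>\frac{(t-2)n^2}{t(t-1)}+\Bigl(s^2-s+\frac{2}{t}\Bigr)n>0
\]
for all $s,t\ge 2$. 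No case split on $t$ is needed, and the claim follows.

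The paper instead keeps every vertex. It splits $V$ into $n$ disjoint transversals and uses pigeonhole to find one with degree sum at least $2|G|/n$. It then rewires the given transversal onto it in place, one vertex at a time. Earlier rewirings disturb later neighbourhoods, so step $j$ costs up to $2j-1$ edges, or $s^2$ in total. The $+s^2n$ slack in the contrary assumption covers this, since it contributes $2s^2$ after dividing by $n$.

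Your version differs in two ways. You take a maximum-degree vertex in each part of $G'$ instead of a transversal pigeonhole, and you clone into fresh vertices. So the comparison is exact, and there is no need to rule out that the high-degree transversal is the given one. The price is a self-referential inequality with denominator $n+1$ instead of $n$. This costs roughly $s^2$, and when $t=2$ it is likewise paid for by the $s^2n$ slack.
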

\begin{proof}[Proof of Claim~\ref{CLAIM:degree-sum-bound}]
    We proceed by contradiction. Suppose there exists a tuple $(v_1,\ldots,v_s)$ such that $\sum_{i=1}^s d_G(v_i) \le s^2 n - \frac{2n}{t}$. 
    Let us label vertices in $V_1, \ldots,V_s$ by $V_i = \{v_i^1, \ldots, v_i^{n}\}$, for all $i\in [s]$.
     Without loss of generality, we may assume the tuple yielding the contradiction is $(v_1,\ldots,v_s) = (v_1^1, \ldots, v_s^1)$. 
    Since 
    \begin{align*}
        \sum_{i\in [n]} \big( d_{G}(v_{1}^i) + \cdots + d_{G}(v_{s}^i) \big)
        = 2|G|
        &> 2\left(\binom{sn}{2} - \binom{n}{2} + t_{t-1}(n) + s^2 n\right) \\
        &= s^2n^2-sn-n^2+n+2t_{t-1}(n)+2s^2n.
    \end{align*}
    Recall that the balanced $(t-1)$-partite Tur\'{a}n graph on $n$ vertices has minimum degree
    \(n-\left\lceil n/(t-1)\right\rceil\). In particular, its average degree satisfies
    \begin{align*}
        \frac{2t_{t-1}(n)}{n} \ge n - \left\lceil \frac{n}{t-1}\right\rceil \ge \frac{t-2}{t-1}n - 1.
    \end{align*}
    
    Hence, by the pigeonhole principle, there exists a tuple $(v_1^{i_0}, \ldots, v_s^{i_0})$ such that 
    \begin{align*}
        d_{G}(v_1^{i_0}) + \cdots + d_{G}(v_s^{i_0})
        \ge \frac{2|G|}{n}
        &> s^2n-s-n+1+\frac{2t_{t-1}(n)}{n}+2s^2 \\
        &\ge s^2n-s-n+1+\left(\frac{t-2}{t-1}n-1\right)+2s^2 \\
        &> s^2n-\frac{2n}{t}+s^2. 
    \end{align*}
    
    If $i_0=1$, then the last displayed inequality contradicts the choice of $(v_1,\ldots,v_s)$. Hence $i_0\ne 1$.
    We now construct a new graph $\tilde{G}$ from $G$ by a sequence of symmetrization operations.
    Set $G^{(0)}\coloneqq G$.
    For $j=1,\ldots,s$, let $G^{(j)}$ be the graph obtained from $G^{(j-1)}$ by deleting all edges incident with $v_j$ and then joining $v_j$ to every vertex in $N_{G^{(j-1)}}(v_j^{i_0})\setminus\{v_j\}$.
    Finally, set $\tilde{G}\coloneqq G^{(s)}$.
    We claim that every graph $G^{(j)}$ is balanced $K_{st}$-free.
    This is clear for $j=0$.
    Suppose that $G^{(j-1)}$ is balanced $K_{st}$-free and that $G^{(j)}$ contains a balanced $K_{st}$, say $K$.
    Since the $j$-th step changes only edges incident with $v_j$, the clique $K$ must contain $v_j$.
    By construction, $v_jv_j^{i_0}\notin G^{(j)}$, so $v_j^{i_0}\notin V(K)$.
    For every $x\in V(K)\setminus\{v_j\}$, the edge $v_jx$ in $G^{(j)}$ implies that $v_j^{i_0}x$ is an edge of $G^{(j-1)}$.
    Replacing $v_j$ by $v_j^{i_0}$ therefore gives a balanced $K_{st}$ in $G^{(j-1)}$, a contradiction.
    Thus $\tilde{G}$ is balanced $K_{st}$-free.
    Moreover, before the $j$-th step only the vertices $v_1,\ldots,v_{j-1}$ have been changed, and hence
    \begin{align*}
        d_{G^{(j-1)}}(v_j^{i_0})\ge d_G(v_j^{i_0})-(j-1)
        \quad\text{and}\quad
        d_{G^{(j-1)}}(v_j)\le d_G(v_j)+(j-1).
    \end{align*}
    In the $j$-th symmetrization, at most one neighbor of $v_j^{i_0}$, namely $v_j$, is not copied to $v_j$.
    Therefore
    \begin{align*}
        |G^{(j)}|-|G^{(j-1)}|
        &\ge d_G(v_j^{i_0})-d_G(v_j)-(2j-1).
    \end{align*}
    Summing this over $j\in[s]$, we obtain
    \begin{align*}
        |\tilde{G}|
        = |G^{(s)}| &\ge |G|+\sum_{j\in[s]}\bigl(d_G(v_j^{i_0})-d_G(v_j)-(2j-1)\bigr)\\
        &= |G| - \sum_{j\in[s]}d_G(v_j) + \sum_{j\in[s]}d_G(v_j^{i_0}) - s^2\\
        &> |G| - \left( s^2n-\frac{2n}{t}\right) + \left( s^2n-\frac{2n}{t} + s^2\right)-s^2
        = |G|, 
    \end{align*}
    contradicting the maximality of $G$. 
\end{proof}
For a set $S \subseteq V(G)$ and an index $i \in [s]$, we define the common neighbors of $S$ in $V_i$ as
\begin{align*}
   N_i(S) \coloneqq \bigcap_{v\in S}\bigl(N_G(v)\cap V_i\bigr).
\end{align*}
Additionally, let $\nu(S)$ denote the number of local equivalence classes contained in $S$.
We may assume throughout the proof that $\nu(V_i)\ge t$ for all $i\in[s]$. 
Indeed, if $\nu(V_i) < t$ for some $i$, then $|G[V_i]|\le t_{t-1}(n)$. 
In this case, the total number of edges in $G$ is bounded by
\begin{align*}
    |G| \le \binom{sn}{2} - \binom{n}{2} + |G[V_i]| 
    \le \binom{sn}{2} - \binom{n}{2} + t_{t-1}(n)
    \le \frac{(sn)^2}{2}-\frac{n^2}{2(t-1)} + s^2 n,
\end{align*}
in which case Theorem~\ref{THM:s-colored-Kst-L1-reduced} follows immediately.
Therefore, we proceed under the assumption that
\begin{align}\label{eq:nu-lower-bound}
        \nu(V_i) \ge t \quad \text{for all } i \in [s].
\end{align}

We next grow a balanced clique as far as possible while keeping enough local classes in every unused part.
Let $\tau$ be the maximum integer for which there exists a set $S\subseteq V(G)$ satisfying:
\begin{enumerate}[label=(\roman*), ref=(\roman*)]
    \item\label{ENM:tua-induced-clique}  
    $S$ induces a $(\tau,t)$-balanced clique in $G$,
    \item\label{ENM:tua-neighbour-nu}  
    $\nu(N_i(S))\ge t$ for every index $i\in[s]$ such that $S\cap V_i=\emptyset$,
\end{enumerate}
where we adopt the convention that $N_i(\emptyset)\coloneqq V_i$.

Let $S$ be a set that realizes this maximum $\tau$.
To simplify notation, we may assume without loss of generality that $S$ intersects exactly the last $\tau$ parts.
Letting $\bar{\tau}\coloneqq s-\tau$, this implies that
\begin{align*}
    S\cap V_i\neq\emptyset \quad \text{if and only if } i\in[\bar{\tau}+1,s].
\end{align*}
Moreover, by defining $S_i\coloneqq S\cap V_i$, we have
\begin{align*}
    |S_i| = t \qquad \text{for every } i\in[\bar{\tau}+1,s].
\end{align*}

\begin{claim}\label{CLAIM:tau-interval}
    $\bar{\tau}\in [2, s]$.
\end{claim}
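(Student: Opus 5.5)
The claim amounts to two inequalities, $\tau\ge 0$ and $\tau\le s-2$. Both should follow directly from the definition of $\tau$, assumption~\eqref{eq:nu-lower-bound}, and the structure of a locally symmetrized graph in Fact~\ref{FACT:symmetrized-G}. No counting is needed.

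For the upper bound $\bar\tau\le s$, i.e.\ $\tau\ge 0$, I take $S=\emptyset$. Condition~\ref{ENM:tua-induced-clique} holds trivially, since the empty set is a $(0,t)$-balanced clique. Condition~\ref{ENM:tua-neighbour-nu} holds because of the convention $N_i(\emptyset)=V_i$ together with~\eqref{eq:nu-lower-bound}. Hence the maximum $\tau$ exists and is nonnegative.

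For the lower bound $\bar\tau\ge 2$ I rule out $\tau=s$ and $\tau=s-1$.

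\emph{The case $\tau=s$.} Here $S$ is itself a $(s,t)$-balanced clique, that is, a balanced $K_{st}$ in $G$. This contradicts the assumption that $G$ is balanced $K_{st}$-free.

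\emph{The case $\tau=s-1$.} Only $V_1$ is disjoint from $S$, and condition~\ref{ENM:tua-neighbour-nu} gives $\nu(N_1(S))\ge t$. So I can choose vertices $x_1,\ldots,x_t\in N_1(S)$ lying in $t$ pairwise distinct equivalence classes of $V_1$. By Fact~\ref{FACT:symmetrized-G}\ref{FACT:symmetrized-G-b}, the bipartite graph between two distinct classes in the same part is complete, so $\{x_1,\ldots,x_t\}$ is a clique. Every $x_j$ lies in $N_1(S)$ and is therefore adjacent to all of $S$. Consequently $S\cup\{x_1,\ldots,x_t\}$ is a clique meeting every part in exactly $t$ vertices, which is a balanced $K_{st}$ in $G$. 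This is again a contradiction.

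The same extension step works for any $\tau$: it always produces a $(\tau+1,t)$-balanced clique. For $\tau<s-1$, however, the extended set need not satisfy condition~\ref{ENM:tua-neighbour-nu} for the remaining parts, so maximality of $\tau$ gives no contradiction there. This is why the argument only yields $\bar\tau\ge 2$ and not more. The one point requiring care is the use of local symmetrization: having $t$ classes in $N_1(S)$ gives a $t$-clique only because distinct classes are completely joined. I would state explicitly that the reduction to a locally symmetrized $G$, justified after Proposition~\ref{PROP:Zykov-symmetry-triangle-ineq}, is in force here.
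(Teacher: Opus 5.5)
Your proposal is correct and follows the paper's proof essentially step for step. You take $S=\emptyset$ to get $\tau\ge 0$, note that $\tau=s$ directly gives a balanced $K_{st}$, and rule out $\tau=s-1$ by picking $t$ vertices of $N_i(S)$ from distinct classes, which form a clique by Fact~\ref{FACT:symmetrized-G} and extend $S$ to a balanced $K_{st}$.
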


\begin{proof}[Proof of Claim~\ref{CLAIM:tau-interval}]
    Since $\bar{\tau} \coloneqq s - \tau$, it suffices to show that $\tau\in [0,\,s-2]$.
    We first verify the lower bound, $\tau \ge 0$. The trivial choice of $\tau=0$ and $S=\emptyset$ satisfies condition~\ref{ENM:tua-induced-clique}, while condition~\ref{ENM:tua-neighbour-nu} holds by~\eqref{eq:nu-lower-bound}. Since $\tau$ is defined as the maximum integer admitting such a set, we conclude $\tau \ge 0$.

    For the upper bound, suppose for contradiction that $\tau \ge s-1$. If $\tau=s$, then condition~\ref{ENM:tua-induced-clique} gives a balanced $K_{st}$ in $G$, a contradiction. Thus $\tau=s-1$. Let $i$ be the unique index with $S\cap V_i=\emptyset$. By~\ref{ENM:tua-neighbour-nu}, we have $\nu(N_i(S))\ge t$. Choose $t$ vertices in $N_i(S)$ from distinct local equivalence classes. These vertices are adjacent to every vertex of $S$, and Fact~\ref{FACT:symmetrized-G} implies that they form a clique inside $V_i$. Hence they extend $S$ to a $(s,t)$-balanced clique, contradicting the $(s,t)$-balanced clique freeness of $G$.  
    Thus $\tau \le s-2$.
\end{proof}

By~\ref{ENM:tua-neighbour-nu}, we have $\nu\bigl(N_i(S)\bigr)\ge t$ for each $i\in[\bar{\tau}]$.
For each such $i$, choose one vertex from each of the $t$ largest local equivalence classes in $N_i(S)$, and denote these vertices by
\begin{align*}
    S_i \coloneqq \{v_1^i,\ldots,v_t^i\},
\end{align*}
so that $[v_1^i],\ldots,[v_t^i]$ are the $t$ largest local equivalence classes in $N_i(S)$.
For any $i\in[\bar{\tau}]$, the set $S\cup S_i$ induces a $(\tau+1,t)$-balanced clique in $G$.
Indeed, each vertex of $S_i$ lies in $N_i(S)$, and the vertices of $S_i$ lie in distinct local equivalence classes, so Fact~\ref{FACT:symmetrized-G} implies that they form a clique inside $V_i$.
For each unused part, the maximality of $\tau$ gives an obstruction to extending $S$, and we record one such obstruction by the map $f$.
Thus, for each such $i$, there must exist an index $f(i)\in[\bar{\tau}]\setminus\{i\}$ such that
\begin{align*}
    \nu\bigl(N_{f(i)}(S\cup S_i)\bigr)\le t-1.
\end{align*}

Since $f$ maps the finite set $[\bar{\tau}]$ to itself and has no fixed point, it contains a directed cycle of length at least two.
By relabeling the indices in $[\bar{\tau}]$, we may assume that this cycle is of the form
\begin{align*}
    1\to 2\to\cdots\to r\to 1 \text{\quad for some $r\in[\bar{\tau}]$ with $r\ge2$.}
\end{align*}
In particular, for every $i\in[r]$, we have 
\begin{align*}
    \nu\bigl(N_{i+1}(S\cup S_i)\bigr)\le t-1,
\end{align*}
where the index $i+1$ is taken modulo $r$.
We split the degree sum according to the cyclic obstruction, the remaining unused parts, and the original clique $S$.
We define the following three partial sums of degrees:
\begin{align*}
    e_1 \coloneqq \sum_{i\in [r]}\sum_{v\in S_i}d_G(v),
    \quad
    e_{2} \coloneqq \sum_{i\in [r+1, \bar{\tau}]}\sum_{v\in S_i}d_G(v)
    \quad\text{and}\quad
    e_{3} \coloneqq \sum_{v\in S}d_G(v).
\end{align*}

\begin{claim}\label{CLAIM:degree-sum-part-one}
    $e_1\le srtn- \sum_{i\in [r]}|N_{i}(S)|.$
\end{claim}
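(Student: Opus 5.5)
The plan is to bound each degree by $sn$ minus the number of non-neighbours (counting $v$ itself). Then we show that, along the cycle $1\to 2\to\cdots\to r\to 1$, the vertices of $S_1\cup\cdots\cup S_r$ together have at least $\sum_{i\in[r]}|N_i(S)|$ non-neighbours. We get these from two disjoint sources: their own equivalence classes inside their own part, and the set $N_{i+1}(S)$ in the next part of the cycle.

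First, for every vertex $v\in V$ we have $d_G(v)= sn-|\overline{N}(v)|$, where $\overline{N}(v)\coloneqq V\setminus N_G(v)$ contains $v$. By Fact~\ref{FACT:symmetrized-G}\ref{FACT:symmetrized-G-a}, it also contains the whole class $[v]$. Fix $i\in[r]$ and $v\in S_i$. We look at two parts of $\overline{N}(v)$:
\begin{align*}
    A(v)\coloneqq [v]\subseteq V_i
    \quad\text{and}\quad
    B(v)\coloneqq N_{i+1}(S)\setminus N_G(v)\subseteq V_{i+1}.
\end{align*}
Since $r\ge 2$, we have $i+1\ne i$ modulo $r$, so $A(v)$ and $B(v)$ lie in different parts and are disjoint. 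Hence
\begin{align*}
    e_1\le srtn-\sum_{i\in[r]}\sum_{v\in S_i}\bigl(|A(v)|+|B(v)|\bigr).
\end{align*}

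Next, we bound the $B$-terms for a fixed $i$. A vertex $x\in N_{i+1}(S)$ that lies outside $N_{i+1}(S\cup S_i)$ is non-adjacent to some $v\in S_i$. Therefore
\begin{align*}
    \sum_{v\in S_i}|B(v)|\ \ge\ \bigl|\bigcup_{v\in S_i}B(v)\bigr| = |N_{i+1}(S)|-|N_{i+1}(S\cup S_i)|.
\end{align*}

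The main step is to compensate the lost term $|N_{i+1}(S\cup S_i)|$ using the $A$-terms of $S_{i+1}$. Membership in $N_{i+1}(\cdot)$ depends only on the neighbourhood, so both $N_{i+1}(S)$ and $N_{i+1}(S\cup S_i)$ are unions of whole equivalence classes. Moreover $N_{i+1}(S\cup S_i)\subseteq N_{i+1}(S)$, and by the cyclic obstruction it consists of at most $t-1$ classes of $N_{i+1}(S)$. The vertices of $S_{i+1}$ were chosen from the $t$ largest classes of $N_{i+1}(S)$. Hence $|N_{i+1}(S\cup S_i)|$ is at most the total size of the $t-1$ largest classes, which gives
\begin{align*}
    |N_{i+1}(S\cup S_i)|\le \sum_{v\in S_{i+1}}|[v]| = \sum_{v\in S_{i+1}}|A(v)|.
\end{align*}

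Finally, we sum over $i\in[r]$, reindexing the $A$-terms cyclically. The terms $|N_{i+1}(S\cup S_i)|$ cancel, and we are left with
\begin{align*}
    \sum_{i\in[r]}\sum_{v\in S_i}\bigl(|A(v)|+|B(v)|\bigr)\ \ge\ \sum_{i\in[r]}|N_{i+1}(S)| = \sum_{i\in[r]}|N_i(S)|.
\end{align*}
Substituting this into the bound for $e_1$ proves the claim.

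The only delicate points are, first, checking that $N_{i+1}(S\cup S_i)$ really is a union of at most $t-1$ full classes lying inside $N_{i+1}(S)$, so that the comparison with the $t$ largest classes is legitimate. Second, $A(v)$ and $B(v)$ must be disjoint, and this needs $r\ge2$.
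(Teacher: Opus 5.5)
Your proof is correct and is essentially the paper's argument, written in terms of non-neighbourhoods rather than neighbourhoods. The paper likewise charges each $v\in S_i$ with the loss $|[v]|$ inside $V_i$, and charges $S_i$ with the loss $|N_{i+1}(S)|-|N_{i+1}(S\cup S_i)|$ inside $V_{i+1}$. It then bounds $|N_{i+1}(S\cup S_i)|$ by the total size of the $t$ largest classes of $N_{i+1}(S)$ and telescopes around the cycle. The only cosmetic difference is that the paper uses the exact identity $|N_i(v)|=n-|[v]|$ from local symmetrization, whereas you need only the inclusion $[v]\subseteq V\setminus N_G(v)$ from Fact~\ref{FACT:symmetrized-G}\ref{FACT:symmetrized-G-a}.
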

\begin{proof}[Proof of Claim~\ref{CLAIM:degree-sum-part-one}]
    Throughout this claim, the index $i+1$ is taken modulo $r$.
    For each $i\in[r]$, we estimate the number of edges between $S_{i}$ and $V_{i+1}$.
    Let $\mathcal{C}$ be the set of local equivalence classes contained in $N_{i+1}(S\cup S_i)$.
    By the definition of $f$ along the cycle, we have $|\mathcal{C}|\le t-1$.
    Every vertex in $N_{i+1}(S)$ outside the classes in $\mathcal{C}$ is adjacent to at most $t-1$ vertices of $S_i$.
    Since $\mathcal{C}$ consists of at most $t-1$ classes contained in $N_{i+1}(S)$, its total size is at most the total size of the $t$ largest classes in $N_{i+1}(S)$, which is $\sum_{j\in[t]}|[v_j^{i+1}]|$ by the choice of $S_{i+1}$.
    Hence
    \begin{align*}
        \sum_{v\in S_i} |N_{i+1}(v)|
        &\le \sum_{v\in S_i} (|V_{i+1}\setminus N_{i+1}(S)| + |N_{i+1}(v)\cap N_{i+1}(S)|)\\ 
        &\le |S_i|(n - |N_{i+1}(S)|) + (|S_i|-1)|N_{i+1}(S)| + |[v_1^{i+1}]| + \cdots + |[v_t^{i+1}]|\\
        &= tn - |N_{i+1}(S)| + \sum_{j\in [t]} |[v_j^{i+1}]|.
    \end{align*}
    Since $G$ is locally symmetrized, the sum of degrees within $V_i$ for vertices in $S_i$ is
    \begin{align*}
        \sum_{v\in S_i} |N_{i}(v)| 
         = (n- |[v_1^{i}]|) + \cdots + (n-|[v_t^{i}]|)
         = tn - \sum_{j\in [t]}|[v_j^{i}]|.
    \end{align*}
    Consequently, the total number of edges between $S_{i}$ and $V_{i}\cup V_{i+1}$ is bounded by
    \begin{align*}
         \sum_{v\in S_i} (|N_{i}(v)| + |N_{i+1}(v)|) \le  2tn - |N_{i+1}(S)| + \sum_{j\in [t]} |[v_j^{i+1}]| - \sum_{j\in [t]}|[v_j^{i}]|.
    \end{align*}
    Including the trivial upper bound $|N_j(v)|\le n$ for the remaining $s-2$ indices $j \notin \{i, i+1\}$, we obtain
    \begin{align*}
        \sum_{v\in S_i} d_G(v) \le stn - |N_{i+1}(S)| + \sum_{j\in [t]} |[v_j^{i+1}]| - \sum_{j\in [t]}|[v_j^{i}]|.
    \end{align*}
    
    Summing this inequality over all $i \in [r]$, the terms involving the sizes of equivalence classes telescope and cancel out, yielding
    \begin{align*}
        \sum_{i\in [r]}\sum_{v\in S_i} d_G(v)\le r\cdot stn - \sum_{i\in [r]}|N_{i+1}(S)| = srtn - \sum_{i\in [r]}|N_{i}(S)|.
    \end{align*}
    This completes the proof.
\end{proof}
Using the trivial upper bound $d_G(v)\le sn$, we estimate the second sum as
\begin{align}\label{eq:degree-sum-part-two}
    e_{2}
    =\sum_{i\in [r+1,\bar{\tau}]}\sum_{v\in S_i} d_G(v)
    \le (\bar{\tau}-r)t \cdot sn.
\end{align}
For the third sum $e_3$, we observe that for $i\in [r]$,
\begin{align*}
    \sum_{v\in S} |N_{i}(v)| \le (|S| - 1)n + |N_{i}(S)|.
\end{align*}
This yields
\begin{align}\label{eq:degree-sum-part-three}
    e_{3}
    =\sum_{v\in S} d_G(v) 
    = \sum_{v\in S}\sum_{j\in [r]} |N_j(v)|+\sum_{v\in S}\sum_{j\in [r+1,s]} |N_j(v)|
    \le s\tau t n + \sum_{i\in [r]}\bigl(|N_{i}(S)| - n\bigr).
\end{align}
Together with the original sets $S_i=S\cap V_i$ for $i\in[\bar{\tau}+1,s]$, the sets $S_i$ are now defined for every $i\in[s]$, and each has size $t$.
Combining Claim~\ref{CLAIM:degree-sum-part-one}, \eqref{eq:degree-sum-part-two}, and \eqref{eq:degree-sum-part-three}, the total degree sum over all $S_i$ is
\begin{align*}
    \sum_{i\in [s]}\sum_{v\in S_i}d_G(v) 
    &= e_1+e_2+e_3\\
    &\le \left(srtn - \sum_{i\in [r]}|N_{i}(S)|\right) + (\bar{\tau}-r)tsn + \left(s\tau t n + \sum_{i\in [r]}\bigl(|N_{i}(S)| - n\bigr)\right)\\
    &\le s^2tn - rn.
\end{align*}
By averaging, there exists a tuple $(v_1,\ldots,v_s)\in S_1\times \cdots\times S_s$ such that
\begin{align*}
    \sum_{i\in [s]}d_G(v_i) 
    &\le \frac{s^2tn - rn}{t} = s^{2}n-\frac{rn}{t}.
\end{align*}
Since $r\ge 2$, this is at most $s^{2}n-\frac{2n}{t}$, contradicting Claim~\ref{CLAIM:degree-sum-bound}.
\end{proof}
\subsection{Proof of the \texorpdfstring{$\ell_2$}{l2}-Norm Bound}\label{SUBSEC:Proof-L2-Norm-Bound}

For a graph $G$, let
\begin{align*}
    \norm{G}_{2}\coloneqq \sum_{v\in V(G)} d_G(v)^2,
    \qquad
    s_G(v)\coloneqq \norm{G}_{2}-\norm{G-v}_{2}.
\end{align*}
Then
\begin{align}\label{equ:def-graph-2norm-degree}
    s_G(v)
    = d_G(v)^2+\sum_{u\in N_G(v)}(2d_G(u)-1).
\end{align}
Summing~\eqref{equ:def-graph-2norm-degree} over \(v\) gives
\begin{align*}
    \sum_{v\in V(G)} s_G(v)
    = 3\norm{G}_{2}-2|G|.
\end{align*}
Indeed, the first term contributes \(\norm{G}_{2}\), while each vertex \(u\) contributes \(d_G(u)(2d_G(u)-1)\) to the second term. Hence
\begin{align*}
    \sum_{v\in V(G)} \sum_{u\in N_G(v)} (2d_G(u)-1)
    = 2\norm{G}_{2}-\sum_{u\in V(G)} d_G(u)
    = 2\norm{G}_{2}-2|G|,
\end{align*}
which gives the desired identity.
If $S=\{v_1,\ldots,v_s\}\subseteq V(G)$ and $G_i\coloneqq G-\{v_1,\ldots,v_i\}$ with $G_0\coloneqq G$, then
\begin{align*}
    \norm{G}_{2}-\norm{G-S}_{2}
    = \sum_{i\in[s]}\bigl(\norm{G_{i-1}}_{2}-\norm{G_i}_{2}\bigr)
    = \sum_{i\in[s]} s_{G_{i-1}}(v_i)
    \le \sum_{v\in S} s_G(v),
\end{align*}
since \(s_{G'}(v)\le s_G(v)\) whenever \(G'\subseteq G\).

We also record the corresponding identity for \(3\)-graphs, which will be used later.
For a $3$-graph \(\mathcal{H}\) and a vertex \(v\in V(\mathcal H)\), let
\begin{align*}
    s_{\mathcal{H}}(v)\coloneqq \norm{\mathcal{H}}_{2} - \norm{\mathcal{H} - v}_{2},
    \qquad
    s(\mathcal{H})\coloneqq \frac{1}{|V(\mathcal{H})|}\sum_{v\in V(\mathcal{H})} s_{\mathcal{H}}(v).
\end{align*}
Then, as in~\cite[Lemma~3.1]{CILLP24},
\begin{align}\label{equ:def-2norm-degree-b}
    s_{\mathcal{H}}(v)
    & = \norm{L_{\mathcal{H}}(v)}_{2} + \sum_{e\in L_{\mathcal{H}}(v)} (2d_{\mathcal{H}}(e) - 1). 
\end{align}
Indeed, let \(\mathcal{P}_v \coloneqq \bigl\{\{v,u\}\colon u\in V(\mathcal{H})\setminus\{v\}\bigr\}\). Then
\begin{align*}
    s_{\mathcal{H}}(v)
    &= \sum_{e\in \mathcal{P}_v} d_{\mathcal{H}}(e)^2
    + \sum_{e\in \binom{V(\mathcal{H})\setminus\{v\}}{2}}
    \left(d_{\mathcal{H}}(e)^2-d_{\mathcal{H}-v}(e)^2\right).
\end{align*}
If \(e\in \binom{V(\mathcal{H})\setminus\{v\}}{2}\), then \(d_{\mathcal{H}-v}(e)=d_{\mathcal{H}}(e)-1\) when \(e\in L_{\mathcal{H}}(v)\), and \(d_{\mathcal{H}-v}(e)=d_{\mathcal{H}}(e)\) otherwise. Hence
\begin{align*}
    s_{\mathcal{H}}(v)
    &= \sum_{u\in V(\mathcal{H})\setminus\{v\}} d_{\mathcal{H}}(vu)^2
    + \sum_{e\in L_{\mathcal{H}}(v)} (2d_{\mathcal{H}}(e)-1) \\
    &= \norm{L_{\mathcal{H}}(v)}_{2}
    + \sum_{e\in L_{\mathcal{H}}(v)} (2d_{\mathcal{H}}(e)-1),
\end{align*}
since \(d_{L_{\mathcal{H}}(v)}(u)=d_{\mathcal{H}}(vu)\) for every \(u\in V(\mathcal{H})\setminus\{v\}\).

Summing~\eqref{equ:def-2norm-degree-b} over \(v\) gives
\begin{align}\label{equ:def-2norm-degree-average}
    \sum_{v\in V(\mathcal H)} s_{\mathcal H}(v)
    = 4\norm{\mathcal H}_{2} - 3|\mathcal H|.
\end{align}
Indeed, \(\sum_{v\in V(\mathcal H)} \norm{L_{\mathcal H}(v)}_{2} = 2\norm{\mathcal H}_{2}\), and each pair \(e\in \binom{V(\mathcal H)}{2}\) contributes \(d_{\mathcal H}(e)\bigl(2d_{\mathcal H}(e)-1\bigr)\) to the second term. Hence
\begin{align*}
    \sum_{v\in V(\mathcal H)} \sum_{e\in L_{\mathcal H}(v)} (2d_{\mathcal H}(e)-1)
    = 2\norm{\mathcal H}_{2} - \sum_{e\in \binom{V(\mathcal H)}{2}} d_{\mathcal H}(e)
    = 2\norm{\mathcal H}_{2} - 3|\mathcal H|,
\end{align*}
which gives~\eqref{equ:def-2norm-degree-average}.
We now prove the $\ell_2$-norm bound for balanced $K_{2s}$-free graphs.
\begin{proof}[Proof of Theorem~\ref{THM:s-colored-K2s-L2-norm-reduced}]
    We proceed by induction on $n$.
    The induction deletes one vertex from each part. If a low-degree vertex exists, the loss of the $\ell_2$-norm is small. Otherwise the edge bound controls the total degree defect.
    For $n=2$,
    \begin{align*}
        \norm{G}_{2}\le |V(G)|^3=(2s)^3=2s^3n^2,
    \end{align*}
    so the statement holds.

    Now let $n\ge 3$, assume that the statement holds for $n-1$, and let $G$ be a balanced $K_{2s}$-free graph with parts $V_1,\ldots,V_s$, each of size $n$.
    By Theorem~\ref{THM:s-colored-Kst-L1-reduced},
    \begin{align*}
        |G|\le \frac{(s^2-1)n^2}{2}+s^2n.
    \end{align*}
    Hence
    \begin{align}\label{eq:degree-weight-sum}
        \sum_{x \in V(G)} d_G(x) = 2|G| \le (s^{2}-1)n^{2} + 2s^{2} n.
    \end{align}
    Suppose first that there is a vertex $v_1\in V(G)$ with $d_G(v_1)<(s-1)n$. By symmetry, we may assume that $v_1\in V_1$, and choose arbitrary vertices \((v_2,\ldots,v_s)\in V_2\times\cdots\times V_s\).
    By~\eqref{equ:def-graph-2norm-degree},
    \begin{align*}
        s_G(v_1)
        \le d_G(v_1)^2+\sum_{u\in N_G(v_1)}2d_G(u) 
        \le d_G(v_1)^2+2sn\,d_G(v_1) 
        \le (s-1)^2n^2+2s(s-1)n^2.
    \end{align*}
    For each $i\in [2,s]$, we similarly have
    \begin{align*}
        s_G(v_i)
        \le d_G(v_i)^2+\sum_{u\in V(G)}2d_G(u) 
        \le s^2n^2+2(s^2-1)n^2+4s^2n 
        = (3s^2-2)n^2+4s^2n,
    \end{align*}
    where we used $d_G(v_i)\le sn$ and~\eqref{eq:degree-weight-sum}. Therefore,
    \begin{align*}
        \norm{G}_{2} - \norm{G -\{v_1,\ldots,v_s\}}_{2}
        &\le \sum_{i\in [s]} s_G(v_i) \\
        &\le (s-1)^2n^2+2s(s-1)n^2+(s-1)\bigl((3s^2-2)n^2+4s^2n\bigr) \\
        &\le (3s^3-6s+3)n^2+4s^3n.
    \end{align*}
    Writing $G'\coloneqq G-\{v_1,\ldots,v_s\}$, we note that $G'$ remains balanced $K_{2s}$-free with parts $V_i\setminus\{v_i\}$, each of size $n-1$.
    Applying the induction hypothesis to $G'$, we obtain
    \begin{align*}
        \norm{G}_{2}
        &\le \norm{G'}_{2}+(3s^3-6s+3)n^2+4s^3n \\
        &\le \bigl(s^2(s-1)+(s-1)^2\bigr)(n-1)^3+4s^3(n-1)^2+(3s^3-6s+3)n^2+4s^3n \\
        &\le \bigl(s^2(s-1)+(s-1)^2\bigr)n^3+4s^3n^2.
    \end{align*}
    The last inequality follows by expanding. The difference between the final right-hand side and the preceding expression is
    \begin{align*}
        (s^3+6s-3)n-(3s^3+2s-1)\ge 0
    \end{align*}
    for $s\ge 2$ and $n\ge 3$.
    Thus the theorem holds in this case.

    We may therefore assume that $(s-1)n\le d_G(v)\le sn$ for every $v\in V(G)$. Since
    \begin{align*}
        \norm{G}_2=\sum_{v\in V(G)} d_G(v)^2,
    \end{align*}
    it remains to bound the degree square sum under these constraints.

    If $n\le 2s^2$, then the trivial bound $d_G(v)\le sn$ gives
    \begin{align*}
        \norm{G}_2=\sum_{v\in V(G)}d_G(v)^2\le sn\cdot (sn)^2=s^3n^3\le (s^2(s-1)+(s-1)^2)n^3+4s^3n^2,
    \end{align*}
    where the last inequality follows from $n\le 2s^2$, since it is equivalent to $(2s-1)n\le 4s^3$.
    Thus we may assume that $n>2s^2$.

    For each vertex $v$, let $\delta_v\coloneqq sn-d_G(v)$. Then $0\le \delta_v\le n$, and by~\eqref{eq:degree-weight-sum},
    \begin{align*}
        \sum_{v}\delta_v
        = s^2n^2-\sum_v d_G(v) 
        \ge s^2n^2-\bigl((s^2-1)n^2+2s^2n\bigr) 
        = n(n-2s^2),
    \end{align*}
    Since $\sum_v\delta_v^2\le n\sum_v\delta_v$, we have
    \begin{align*}
        \sum_v d_G(v)^2
        = \sum_v (sn-\delta_v)^2 
        = s^3n^3-2sn\sum_v\delta_v+\sum_v\delta_v^2 
        \le s^3n^3-(2s-1)n\sum_v\delta_v.
    \end{align*}
    Therefore
    \begin{align*}
        \norm{G}_{2}
        = \sum_v d_G(v)^2
        &\le s^3n^3-(2s-1)n\sum_v\delta_v\\
        &\le s^3n^3-(2s-1)n^2(n-2s^2)\\
        &= (s^2(s-1) + (s-1)^2) n^3 + (4s^3 - 2s^2)n^2.
    \end{align*}
    This completes the proof of Theorem~\ref{THM:s-colored-K2s-L2-norm-reduced}.
\end{proof}
\subsection{Proof of the Crossing-Triangle Bound}\label{SUBSEC:Proof-Crossing-Triangle-Bound}
For a locally symmetrized graph $G$, let
\begin{align*}
    V_1 = A_1 \cup \cdots \cup A_p
    \qquad\text{and}\qquad
    V_2 = B_1 \cup \cdots \cup B_q
\end{align*}
be the decompositions into equivalence classes, and write
\begin{align*}
    a_i \coloneqq |A_i|,
    \qquad
    b_j \coloneqq |B_j|.
\end{align*}
We define an auxiliary bipartite graph $H$ with vertex classes $[p]$ and $[q]$, regarded as disjoint copies, by declaring that $ij\in H$ if and only if $G[A_i,B_j]$ is complete.

\begin{lemma}\label{LEM:crossing-triangle-class-graph}
    The auxiliary graph $H$ is $K_{2,2}$-free.
\end{lemma}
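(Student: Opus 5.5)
Suppose, for a contradiction, that $H$ contains a copy of $K_{2,2}$. Then there are distinct indices $i,i'\in[p]$ and distinct indices $j,j'\in[q]$ such that $ij,ij',i'j,i'j'\in H$. We will find a balanced $K_4$ in $G$, contradicting the hypothesis that $G$ is balanced $K_4$-free (here $s=t=2$).

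Choose one vertex from each of the four classes: $x\in A_i$, $x'\in A_{i'}$, $y\in B_j$, and $y'\in B_{j'}$. The claim is that $\{x,x',y,y'\}$ spans a $K_4$ in $G$.

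\emph{Edges inside the parts.} The vertices $x$ and $x'$ lie in distinct equivalence classes of $V_1$. Since $G$ is locally symmetrized, Fact~\ref{FACT:symmetrized-G}\ref{FACT:symmetrized-G-b} says that the bipartite graph between $A_i$ and $A_{i'}$ is complete, so $xx'\in G$. The same argument gives $yy'\in G$.

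\emph{Crossing edges.} By the definition of $H$, each edge $ij\in H$ means that $G[A_i,B_j]$ is complete. Applying this to the four edges $ij,ij',i'j,i'j'$ gives $xy,xy',x'y,x'y'\in G$.

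Hence $\{x,x',y,y'\}$ is a clique with exactly two vertices in $V_1$ and two in $V_2$, that is, a balanced $K_4$. This contradicts the assumption on $G$, so $H$ is $K_{2,2}$-free.

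The only point needing care is that the two vertices chosen in the same part must come from \emph{distinct} classes. This holds because the two vertices of $K_{2,2}$ on each side of $H$ are distinct indices. It matters because vertices in the same class are nonadjacent by Fact~\ref{FACT:symmetrized-G}\ref{FACT:symmetrized-G-a}. Apart from this, there is no real obstacle: the lemma is a direct unpacking of the definitions together with Fact~\ref{FACT:symmetrized-G}.
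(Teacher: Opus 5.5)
Your proof is correct and matches the paper's argument: choose one vertex from each of the four classes and observe that they span a balanced $K_4$. You also spell out the step the paper leaves implicit, namely that the two same-side edges come from Fact~\ref{FACT:symmetrized-G}\ref{FACT:symmetrized-G-b}, because the chosen vertices lie in distinct classes.
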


\begin{proof}[Proof of Lemma~\ref{LEM:crossing-triangle-class-graph}]
    Suppose that $i_1,i_2\in [p]$ and $j_1,j_2\in [q]$ span a copy of $K_{2,2}$ in $H$.
    Choosing arbitrary vertices $u_t\in A_{i_t}$ and $v_t\in B_{j_t}$ for $t\in [2]$, we obtain a balanced $K_4$ in $G$, a contradiction.
\end{proof}

\begin{proof}[Proof of Theorem~\ref{THM:2-colored-K4-triangle-reduced}]
    By Proposition~\ref{PROP:Zykov-symmetry-triangle-ineq}, we may assume that $G$ is locally symmetrized.
    The auxiliary graph converts balanced $K_4$-freeness into a $K_{2,2}$-free condition, which lets us count crossing triangles through common neighborhoods in $H$.
    Let $T_{2,1}$ and $T_{1,2}$ denote the numbers of crossing triangles with two vertices in $V_1$ and in $V_2$, respectively.
    Thus
    \begin{align*}
        \mathcal{N}_{\mathrm{cr}}(K_{3}, G)=T_{2,1}+T_{1,2}.
    \end{align*}

    For a fixed class $B_j$, every crossing triangle counted by $T_{2,1}$ that uses a vertex of $B_j$ arises from a pair of distinct classes $A_i,A_k$ with $i,k\in N_H(j)$.
    Conversely, every such choice gives crossing triangles, since $G[A_i,A_k]$ is complete by Fact~\ref{FACT:symmetrized-G}, while $G[A_i,B_j]$ and $G[A_k,B_j]$ are complete by the definition of $H$.
    Hence
    \begin{align*}
        T_{2,1}
        = \sum_{j\in [q]} b_j \sum_{\{i,k\}\subseteq N_H(j)} a_i a_k.
    \end{align*}
    Let $a \coloneqq \max_{i\in [p]} a_i$ and $b \coloneqq \max_{j\in [q]} b_j$.
    Since $H$ is $K_{2,2}$-free by Lemma~\ref{LEM:crossing-triangle-class-graph}, we have
    \begin{align*}
        |N_H(i)\cap N_H(k)|\le 1
        \qquad\text{for all }1\le i<k\le p.
    \end{align*}
    It follows that
    \begin{align*}
        T_{2,1}
        &\le b \sum_{j\in [q]} \sum_{\{i,k\}\subseteq N_H(j)} a_i a_k \\
        &= b \sum_{1\le i<k\le p} a_i a_k \cdot |N_H(i)\cap N_H(k)| \\
        &\le b \sum_{1\le i<k\le p} a_i a_k 
        = b\left(\binom{n}{2}-\sum_{i\in [p]}\binom{a_i}{2}\right)
        \le b\left(\binom{n}{2}-\binom{a}{2}\right).
    \end{align*}
    By symmetry,
    \begin{align*}
        T_{1,2}
        \le a\left(\binom{n}{2}-\binom{b}{2}\right).
    \end{align*}
    Since $n\binom{n}{2}$ is the number of crossing triangles in each of the split graphs $\Lambda[V_1;V_2]$ and $\Lambda[V_2;V_1]$, a direct calculation gives
    \begin{align*}
        n\binom{n}{2}-(T_{1,2}+T_{2,1})
        &\ge n\binom{n}{2}
        - \left(b\left(\binom{n}{2}-\binom{a}{2}\right)+a\left(\binom{n}{2}-\binom{b}{2}\right)\right) \\
        &= \frac{n(a+b-n)(a+b-n-1)}{2} + \frac{(n-a)(n-b)(a+b-2)}{2} \ge 0.
    \end{align*}
    The last inequality holds because \(a+b-n\) is an integer and \(1\le a,b\le n\), so both terms in the final expression are nonnegative.
    Therefore
    \begin{align*}
        \mathcal{N}_{\mathrm{cr}}(K_{3}, G)\le T_{1,2}+T_{2,1}\le n\binom{n}{2}.
    \end{align*}
\end{proof}

\begin{proof}[Proof of Proposition~\ref{PROP:2-colored-K4-triangle-stability-sym}]
    The proof first shows that near-extremality forces one color class to contain a large independent set and the other color class to be almost complete.
    Let $T_{2,1}$ and $T_{1,2}$ denote the numbers of crossing triangles in $G$ with two vertices in $V_1$ and in $V_2$, respectively.
    Let \(\alpha_1\coloneqq \alpha(G[V_1])\) and \(\alpha_2\coloneqq \alpha(G[V_2])\), where \(\alpha(F)\) denotes the independence number of a graph \(F\).
    For every edge $e\in G[V_1]$, the set $N_G(e)\cap V_2$ is independent in $G[V_2]$, since otherwise $e$ together with an edge of $G[V_2]$ inside this common neighborhood would span a balanced copy of $K_4$ in $G$.
    Therefore
    \begin{align*}
        T_{2,1}\le \alpha_2 |G[V_1]|
        \qquad\text{and}\qquad
        T_{1,2}\le \alpha_1 |G[V_2]|.
    \end{align*}
    Also,
    \begin{align*}
        |G[V_i]|\le \binom{n}{2}-\binom{\alpha_i}{2}
        \qquad\text{for }i\in[2].
    \end{align*}
    Set \(x\coloneqq \alpha_1/n\) and \(y\coloneqq \alpha_2/n\). Then the hypothesis gives
    \begin{align*}
        n\binom{n}{2}-\varepsilon_{\ref{PROP:2-colored-K4-triangle-stability-sym}}n^3
        &\le \mathcal{N}_{\mathrm{cr}}(K_3,G) 
        = T_{2,1}+T_{1,2} \\
        &\le \alpha_2\left(\binom{n}{2}-\binom{\alpha_1}{2}\right)
            + \alpha_1\left(\binom{n}{2}-\binom{\alpha_2}{2}\right) \\
        &\le \frac{n^3}{2}\bigl(y(1-x^2)+x(1-y^2)\bigr) 
        = \frac{n^3}{2}(x+y)(1-xy).
    \end{align*}
    We shall choose \(N_{\ref{PROP:2-colored-K4-triangle-stability-sym}}\) at the end so that \(1/n\le \varepsilon_{\ref{PROP:2-colored-K4-triangle-stability-sym}}\). Since
    \begin{align*}
        n\binom{n}{2}=\left(\frac{1}{2}-\frac{1}{2n}\right)n^3,
    \end{align*}
    it follows that \((x+y)(1-xy)\ge 1-3\varepsilon_{\ref{PROP:2-colored-K4-triangle-stability-sym}}\).

    Write \(S\coloneqq x+y\) and \(P\coloneqq xy\). Since
    \begin{align*}
        1-3\varepsilon_{\ref{PROP:2-colored-K4-triangle-stability-sym}}
        \le S(1-P)\le S,
    \end{align*}
    we have \(S\ge 1-3\varepsilon_{\ref{PROP:2-colored-K4-triangle-stability-sym}}\). If \(S\le 1\), then \(|S-1|\le 3\varepsilon_{\ref{PROP:2-colored-K4-triangle-stability-sym}}\). If \(S\ge 1\), then \((1-x)(1-y)\ge 0\) gives \(P\ge S-1\), and hence
    \begin{align*}
        1-3\varepsilon_{\ref{PROP:2-colored-K4-triangle-stability-sym}}
        \le S(1-P)\le S(2-S)=1-(S-1)^2.
    \end{align*}
    Thus \(S\le 1+2\varepsilon_{\ref{PROP:2-colored-K4-triangle-stability-sym}}^{1/2}\). Returning to \(1-3\varepsilon_{\ref{PROP:2-colored-K4-triangle-stability-sym}}\le S(1-P)\), we obtain
    \begin{align*}
        1-P\ge \frac{1-3\varepsilon_{\ref{PROP:2-colored-K4-triangle-stability-sym}}}{S}
        \ge \frac{1-3\varepsilon_{\ref{PROP:2-colored-K4-triangle-stability-sym}}}{1+2\varepsilon_{\ref{PROP:2-colored-K4-triangle-stability-sym}}^{1/2}},
    \end{align*}
    and therefore \(P\le C\varepsilon_{\ref{PROP:2-colored-K4-triangle-stability-sym}}^{1/2}\) for some absolute constant \(C\).
    In the rest of the proof, \(C\) denotes a sufficiently large absolute constant, which may be enlarged finitely many times.

    By symmetry, it suffices to treat the case $x\ge y$. If \(\varepsilon_{\ref{PROP:2-colored-K4-triangle-stability-sym}}\le 1/9\), then \(x\ge S/2\ge (1-3\varepsilon_{\ref{PROP:2-colored-K4-triangle-stability-sym}})/2\ge 1/3\), so
    \begin{align*}
        y=\frac{P}{x}\le 3C\varepsilon_{\ref{PROP:2-colored-K4-triangle-stability-sym}}^{1/2}
        \qquad\text{and}\qquad
        x=S-y\ge 1-C\varepsilon_{\ref{PROP:2-colored-K4-triangle-stability-sym}}^{1/2},
    \end{align*}
    after enlarging $C$ if necessary.

    Let $r\coloneqq n-\alpha_1$. Then \(r\le C\varepsilon_{\ref{PROP:2-colored-K4-triangle-stability-sym}}^{1/2}n\), and hence
    \begin{align*}
        |G[V_1]|\le \binom{n}{2}-\binom{\alpha_1}{2}\le rn\le C\varepsilon_{\ref{PROP:2-colored-K4-triangle-stability-sym}}^{1/2}n^2.
    \end{align*}
    Consequently, \(T_{2,1}\le \alpha_2 |G[V_1]|\le C\varepsilon_{\ref{PROP:2-colored-K4-triangle-stability-sym}}n^3\), and therefore
    \begin{align}
        T_{1,2}
        = \mathcal{N}_{\mathrm{cr}}(K_3,G)-T_{2,1}
        \ge n\binom{n}{2}-C\varepsilon_{\ref{PROP:2-colored-K4-triangle-stability-sym}}n^3. \label{eq:prop26-T12-lower}
    \end{align}
    Since every edge of $G[V_2]$ lies in at most $n$ crossing triangles of type $V_1V_2V_2$, we obtain
    \begin{align*}
        |G[V_2]|
        \ge \frac{T_{1,2}}{n}
        \ge \binom{n}{2}-C\varepsilon_{\ref{PROP:2-colored-K4-triangle-stability-sym}}n^2.
    \end{align*}
    Let \(\overline{G}\) denote the complement of \(G\) on \(V_1\cup V_2\).
    Thus
    \begin{align*}
        |\overline{G}[V_2]|
        = \binom{n}{2}-|G[V_2]|
        \le C\varepsilon_{\ref{PROP:2-colored-K4-triangle-stability-sym}}n^2.
    \end{align*}
    It remains to control the missing cross edges.
    For each \(b\in V_2\), set
    \begin{align*}
        x_b\coloneqq \bigl|\{a\in V_1\colon ab\notin G\}\bigr|,
    \end{align*}
    and thus
    \begin{align*}
        |\overline{G}[V_1,V_2]|=\sum_{b\in V_2}x_b.
    \end{align*}
    An absent triangle of type \(V_1V_2V_2\) is a triple \(\{a,b,b'\}\) with \(a\in V_1\) and \(bb'\in G[V_2]\) that is not a triangle of \(G\), which happens exactly when at least one of \(ab\) and \(ab'\) is missing.
    Thus the number of absent triangles of type $V_1V_2V_2$ is \(n|G[V_2]|-T_{1,2}\).
    Charging each absent triangle to its missing cross edges, each absent triangle is charged at most twice. Hence, by~\eqref{eq:prop26-T12-lower},
    \begin{align}
        \sum_{b\in V_2} x_b d_{G[V_2]}(b)
        \le 2\bigl(n|G[V_2]|-T_{1,2}\bigr)
        \le 2C\varepsilon_{\ref{PROP:2-colored-K4-triangle-stability-sym}}n^3. \label{eq:prop26-weighted-missing}
    \end{align}
    Let
    \begin{align*}
        V_2'\coloneqq \bigl\{b\in V_2\colon d_{G[V_2]}(b)\ge n-1-C\varepsilon_{\ref{PROP:2-colored-K4-triangle-stability-sym}}^{1/2}n\bigr\}.
    \end{align*}
    Then for any $b\in V_2\setminus V_2'$, we have $n-1-d_{G[V_2]}(b)>C\varepsilon_{\ref{PROP:2-colored-K4-triangle-stability-sym}}^{1/2}n$. 
    Since
    \begin{align*}
        \sum_{b\in V_2}(n-1-d_{G[V_2]}(b))
        = 2|\overline{G}[V_2]|
        \le 2C\varepsilon_{\ref{PROP:2-colored-K4-triangle-stability-sym}}n^2,
    \end{align*}
    and every vertex \(b\in V_2\setminus V_2'\) contributes more than \(C\varepsilon_{\ref{PROP:2-colored-K4-triangle-stability-sym}}^{1/2}n\) to this sum, we obtain
    \begin{align*}
        |V_2\setminus V_2'|
        \le C\varepsilon_{\ref{PROP:2-colored-K4-triangle-stability-sym}}^{1/2}n,
    \end{align*}
    after enlarging \(C\) if necessary.
    Hence the number of missing cross edges incident to \(V_2\setminus V_2'\) is at most \(n|V_2\setminus V_2'|\le C\varepsilon_{\ref{PROP:2-colored-K4-triangle-stability-sym}}^{1/2}n^2\).
    For each $b\in V_2'$, if \(C\varepsilon_{\ref{PROP:2-colored-K4-triangle-stability-sym}}^{1/2}\le 1/4\) and \(n\ge 4\), then
    \begin{align*}
        d_{G[V_2]}(b)\ge n-1-\frac{n}{4}\ge \frac{n}{2}.
    \end{align*}
    Hence, by~\eqref{eq:prop26-weighted-missing},
    \begin{align*}
        \frac{n}{2}\sum_{b\in V_2'} x_b
        \le \sum_{b\in V_2'} x_b d_{G[V_2]}(b)
        \le 2C\varepsilon_{\ref{PROP:2-colored-K4-triangle-stability-sym}}n^3.
    \end{align*}
    Therefore
    \begin{align*}
        \sum_{b\in V_2'}x_b\le C\varepsilon_{\ref{PROP:2-colored-K4-triangle-stability-sym}}n^2,
    \end{align*}
    after enlarging $C$ if necessary, and hence
    \begin{align*}
        |\overline{G}[V_1,V_2]|
        = \sum_{b\in V_2}x_b
        \le C\varepsilon_{\ref{PROP:2-colored-K4-triangle-stability-sym}}^{1/2}n^2.
    \end{align*}

    Thus $G$ can be transformed into $\Lambda[V_1;V_2]$ by deleting all edges of $G[V_1]$, adding all missing edges inside $V_2$, and adding all missing cross edges between $V_1$ and $V_2$. The total number of changes is at most
    \begin{align*}
        |G[V_1]| + |\overline{G}[V_2]| + |\overline{G}[V_1,V_2]|
        \le C\varepsilon_{\ref{PROP:2-colored-K4-triangle-stability-sym}}^{1/2}n^2.
    \end{align*}
    If the original graph has \(y>x\), the same argument with the roles of \(V_1\) and \(V_2\) exchanged shows that $G$ can be transformed into $\Lambda[V_2;V_1]$ by changing at most \(C\varepsilon_{\ref{PROP:2-colored-K4-triangle-stability-sym}}^{1/2}n^2\) edges.
    We now choose the parameters so that all smallness assumptions used above and the required edit-distance bound hold.
    At this point \(C\) is fixed. Choose \(\varepsilon_{\ref{PROP:2-colored-K4-triangle-stability-sym}}>0\) such that
    \begin{align*}
        \varepsilon_{\ref{PROP:2-colored-K4-triangle-stability-sym}}
        \le \min\left\{\frac{1}{9},\frac{1}{16C^2},\frac{\eta^2}{C^2}\right\}.
    \end{align*}
    Then \(C\varepsilon_{\ref{PROP:2-colored-K4-triangle-stability-sym}}^{1/2}\le \min\{\eta,1/4\}\), and the condition \(\varepsilon_{\ref{PROP:2-colored-K4-triangle-stability-sym}}\le 1/9\) used above holds.
    Finally, choose \(N_{\ref{PROP:2-colored-K4-triangle-stability-sym}}\) such that
    \begin{align*}
        N_{\ref{PROP:2-colored-K4-triangle-stability-sym}}
        \ge \max\left\{4,\left\lceil \frac{1}{\varepsilon_{\ref{PROP:2-colored-K4-triangle-stability-sym}}}\right\rceil\right\}.
    \end{align*}
    Then \(1/n\le \varepsilon_{\ref{PROP:2-colored-K4-triangle-stability-sym}}\) and \(n\ge 4\) for all \(n\ge N_{\ref{PROP:2-colored-K4-triangle-stability-sym}}\).
    This completes the proof of Proposition~\ref{PROP:2-colored-K4-triangle-stability-sym}.
\end{proof}
\section{Local structure near the bipartite construction}\label{SEC:Common-Structure}
In this section, we collect the common structural facts for $3$-graphs close to the bipartite construction. These facts do not depend on the objective function. They will be used in both the $\ell_2$-norm problem and the clique-counting problem.

We first introduce the notation used in this section.
Let $\mathcal H$ be an $n$-vertex $3$-graph. Given a bipartition $V_1\cup V_2=V(\mathcal H)$, let $\mathbb B[V_1,V_2]$ denote the complete bipartite $3$-graph with parts $V_1,V_2$, whose edges are the triples of type $V_1V_1V_2$ or $V_1V_2V_2$.
We write
\begin{align*}
    \mathcal H[V_1,V_2]\coloneqq \mathcal H\cap \mathbb B[V_1,V_2].
\end{align*}
We say that $V_1\cup V_2$ is a maximum partition of $\mathcal H$ if it maximizes $|\mathcal H[V_1,V_2]|$ over all bipartitions of $V(\mathcal H)$.
With respect to the same partition, define
\begin{align*}
    \mathcal B_{\mathcal H}[V_1,V_2]\coloneqq \mathcal H\setminus \mathbb B[V_1,V_2]
    \qquad\text{and}\qquad
    \mathcal M_{\mathcal H}[V_1,V_2]\coloneqq \mathbb B[V_1,V_2]\setminus \mathcal H.
\end{align*}
We call the edges in $\mathcal B_{\mathcal H}[V_1,V_2]$ bad edges, and the edges in $\mathcal M_{\mathcal H}[V_1,V_2]$ missing edges.
When there is no ambiguity, we write simply $\mathcal B$ and $\mathcal M$.

For pairs, we use
\begin{align*}
    K[V_1,V_2]\coloneqq \bigl\{\{x,y\}\colon x\in V_1,\ y\in V_2\bigr\}
    \qquad\text{and}\qquad
    \overline K[V_1,V_2]\coloneqq \binom{V_1}{2}\cup \binom{V_2}{2},
\end{align*}
so $K[V_1,V_2]$ is the set of cross pairs and $\overline K[V_1,V_2]$ is the set of same-side pairs.

For a family $\mathcal F$ of triples and a set $S\subseteq V(\mathcal H)$, let
\begin{align*}
    \mathcal F(S)\coloneqq \{F\in\mathcal F\colon S\subseteq F\}
    \qquad\text{and}\qquad
    d_{\mathcal F}(S)\coloneqq |\mathcal F(S)|.
\end{align*}
If $e$ is a pair, we also write
\begin{align*}
    N_{\mathcal F}(e)\coloneqq \{v\in V(\mathcal H)\colon e\cup\{v\}\in\mathcal F\},
\end{align*}
so that $d_{\mathcal F}(e)=|N_{\mathcal F}(e)|$. The maximum degree of $\mathcal F$ is
\begin{align*}
    \Delta(\mathcal F)\coloneqq \max_{v\in V(\mathcal H)}d_{\mathcal F}(v).
\end{align*}
Let $\partial\mathcal H$ denote the shadow of $\mathcal H$. For a vertex $v\in V(\mathcal H)$, its link graph is
\begin{align*}
    L_{\mathcal H}(v)\coloneqq \{xy\in \partial\mathcal H\colon vxy\in\mathcal H\}.
\end{align*}
For convenience, we view $L_{\mathcal H}(v)$ as a graph on $V(\mathcal H)$.
If $L$ is a graph and $X,Y$ are disjoint vertex sets, then $L[X]$ denotes the subgraph of $L$ induced by $X$, and $L[X,Y]$ denotes the bipartite subgraph of $L$ induced between $X$ and $Y$.

Let $\mathbb B_n$ denote the balanced complete bipartite $3$-graph on $n$ vertices. A simple calculation gives
\begin{align*}
    |\mathbb B_n|=\frac{n-2}{2}\left\lfloor\frac{n^2}{4}\right\rfloor.
\end{align*}

Let $\delta>0$. We say that $\mathcal H$ is $\delta$-close to $\mathbb B_n$ if there exists a balanced partition $U_1\cup U_2=V(\mathcal H)$, that is, $\bigl||U_1|-|U_2|\bigr|\le 1$, such that
\begin{align*}
    |\mathcal H\triangle \mathbb B[U_1,U_2]|
    \le \delta n^3.
\end{align*}

By comparing the crossing triples containing a vertex before and after moving it to the other part in a maximum partition, we obtain the following fact.

\begin{fact}\label{FACT:common-maximum-partition-link}
    Let $\mathcal H$ be a $3$-graph, and let $V_1\cup V_2=V(\mathcal H)$ be a maximum partition. Fix $i\in[2]$ and $v\in V_i$, and let $L\coloneqq L_{\mathcal H}(v)$. Then
    \begin{align*}
        |L[V_{3-i}]|\ge |L[V_i]|.
    \end{align*}
\end{fact}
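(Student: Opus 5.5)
The plan is to compare the number of crossing edges of $\mathcal H$ with respect to $V_1\cup V_2$ against the number with respect to the partition obtained by moving $v$ to the other side, and then use the maximality of $V_1\cup V_2$.

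Let $V_1'\coloneqq V_i\setminus\{v\}$ and $V_2'\coloneqq V_{3-i}\cup\{v\}$; we record the new partition with $v$ placed on the side opposite to $V_i$. Only triples containing $v$ can change their status (crossing or not), so it suffices to examine the edges of $\mathcal H$ through $v$. Such an edge is $vxy$ with $xy\in L$, and we sort it by where $x,y$ lie:
\begin{itemize}
\item If $x,y\in V_i$, the triple lies entirely in $V_i$, so it is not crossing before the move. After the move it meets both sides ($v$ on one, $x,y$ on the other), so it becomes crossing.
\item If $x,y\in V_{3-i}$, the triple is crossing before the move. After the move $v,x,y$ are all on the same side, so it stops being crossing.
\item If $x$ and $y$ lie in different parts, the triple contains points of both sides whichever side $v$ is on, so it is crossing both before and after.
\end{itemize}
Triples not containing $v$ are unaffected. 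Therefore
\begin{align*}
    |\mathcal H[V_1',V_2']|-|\mathcal H[V_1,V_2]| = |L[V_i]|-|L[V_{3-i}]|.
\end{align*}

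Since $V_1\cup V_2$ maximizes $|\mathcal H[\cdot,\cdot]|$ over all bipartitions, the left-hand side is at most $0$, which gives $|L[V_{3-i}]|\ge |L[V_i]|$.

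The one point needing care is the degenerate case $V_i=\{v\}$, where the new partition has an empty part. Here $L[V_i]$ has no edges, so the inequality holds trivially. Alternatively, if bipartitions with an empty part are allowed, the computation above goes through unchanged. No step here is a real obstacle; the whole proof is this bookkeeping.
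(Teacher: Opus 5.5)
Your proposal is correct and is essentially the paper's argument: the paper justifies the fact in one line by comparing the crossing triples containing $v$ before and after moving $v$ to the other part of a maximum partition. Your case analysis makes this explicit and gives the exact change $|L[V_i]|-|L[V_{3-i}]|$, and your treatment of the degenerate case $V_i=\{v\}$ is also fine.
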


The next lemma records the elementary consequences of being close to $\mathbb B_n$ with respect to a maximum partition.

\begin{lemma}\label{LEMMA:common-near-bipartite-setup}
    Let $\delta>0$ be sufficiently small, and let $n$ be sufficiently large.
    Suppose that $\mathcal H$ is an $n$-vertex $3$-graph which is $\delta$-close to $\mathbb B_n$.
    Let $V_1\cup V_2=V(\mathcal H)$ be a maximum partition of $\mathcal H$, and let $\mathcal B$ and $\mathcal M$ be the bad and missing edges with respect to this partition.
    Then we have
    \begin{enumerate}[label=(\roman*), ref=(\roman*)]
        \item\label{LEMMA:common-near-bipartite-setup-bm} $\max\{|\mathcal B|,|\mathcal M|\}\le 2\delta n^3$,
        \item\label{LEMMA:common-near-bipartite-setup-parts} $\bigl||V_i|-n/2\bigr|\le 10\delta^{1/2}n$ for each $i\in[2]$,
        \item\label{LEMMA:common-near-bipartite-setup-size} $\bigl||\mathcal H|-|\mathbb B_n|\bigr|\le \delta n^3$.
    \end{enumerate}
\end{lemma}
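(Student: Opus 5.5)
The three parts all follow from the same chain of edge-counting inequalities, using only the maximality of the partition $V_1\cup V_2$ and the explicit formula for $|\mathbb B[V_1,V_2]|$; no structural input from Section~\ref{SEC:Colored-Turan} is needed. Let $U_1\cup U_2$ be the balanced partition witnessing $\delta$-closeness, so that $\mathbb B[U_1,U_2]\cong\mathbb B_n$ and $|\mathcal H\triangle\mathbb B[U_1,U_2]|\le\delta n^3$.

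\emph{Part (iii).} This is immediate: $|\mathcal H|$ and $|\mathbb B[U_1,U_2]|=|\mathbb B_n|$ differ by at most the size of the symmetric difference.

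\emph{Lower bound on crossing edges.} The same hypothesis gives $|\mathcal H[U_1,U_2]|\ge |\mathbb B_n|-\delta n^3$, since every edge of $\mathbb B[U_1,U_2]$ missing from $\mathcal H$ lies in the symmetric difference. By maximality of $V_1\cup V_2$,
\begin{align*}
    |\mathcal H[V_1,V_2]|\ \ge\ |\mathcal H[U_1,U_2]|\ \ge\ |\mathbb B_n|-\delta n^3 .
\end{align*}

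\emph{Part (i).} Combining this with (iii) gives
\begin{align*}
    |\mathcal B|=|\mathcal H|-|\mathcal H[V_1,V_2]|\le \bigl(|\mathbb B_n|+\delta n^3\bigr)-\bigl(|\mathbb B_n|-\delta n^3\bigr)=2\delta n^3 .
\end{align*}
For the missing edges, I use
\begin{align*}
    |\mathbb B[V_1,V_2]|=\binom{n}{3}-\binom{|V_1|}{3}-\binom{|V_2|}{3}.
\end{align*}
By convexity of $x\mapsto\binom{x}{3}$ on the integers, this is maximized by a balanced partition, so $|\mathbb B[V_1,V_2]|\le|\mathbb B_n|$. Hence
\begin{align*}
    |\mathcal M|=|\mathbb B[V_1,V_2]|-|\mathcal H[V_1,V_2]|\le \delta n^3 .
\end{align*}

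\emph{Part (ii).} This is the only step with an actual computation. From the displays above,
\begin{align*}
    |\mathbb B[V_1,V_2]|\ \ge\ |\mathcal H[V_1,V_2]|\ \ge\ |\mathbb B_n|-\delta n^3 .
\end{align*}
Write $|V_1|=n/2+x$ and $|V_2|=n/2-x$, and expand
\begin{align*}
    \binom{n/2+x}{3}+\binom{n/2-x}{3}-2\binom{n/2}{3}=\Bigl(\frac n2-1\Bigr)x^2 .
\end{align*}
This is exact as polynomials: the odd powers of $x$ cancel, the cubic term of $\binom{a}{3}$ contributes $\tfrac{n}{2}x^2$, and the quadratic term contributes $-x^2$. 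Replacing $2\binom{n/2}{3}$ by the actual integer value for $\mathbb B_n$ (relevant when $n$ is odd) changes things by only $O(n)$. Therefore
\begin{align*}
    \Bigl(\frac n2-1\Bigr)x^2\le \delta n^3+O(n),
\end{align*}
which gives $x^2\le 3\delta n^2$ for $n$ large (in terms of $\delta$). Thus $|x|\le 2\delta^{1/2}n\le 10\delta^{1/2}n$, with room to spare.

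The only mildly delicate point is this floor/parity bookkeeping in (ii), which is absorbed by the generous constant $10$ and the assumption that $n$ is sufficiently large.
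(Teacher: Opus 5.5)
Your proof is correct and follows the paper's argument essentially step by step: the same chain $|\mathcal H[V_1,V_2]|\ge|\mathcal H\cap\mathbb B[U_1,U_2]|\ge|\mathbb B_n|-\delta n^3$, combined with the upper bound on $|\mathcal H|$, gives all three parts. The only cosmetic difference is that you write $|\mathbb B[V_1,V_2]|=\binom n3-\binom{|V_1|}3-\binom{|V_2|}3$ and use convexity, whereas the paper uses $|\mathbb B[V_1,V_2]|=|V_1||V_2|(n-2)/2$. With that formula your identity $(\frac n2-1)x^2$ is just $\frac{n-2}{2}\bigl(\frac{n^2}{4}-|V_1||V_2|\bigr)$, and the balanced maximum is immediate.
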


\begin{proof}[Proof of Lemma~\ref{LEMMA:common-near-bipartite-setup}]
    Since $\mathcal H$ is $\delta$-close to $\mathbb B_n$, there is a balanced partition $U_1\cup U_2=V(\mathcal H)$ such that
    \begin{align*}
        |\mathcal H\triangle \mathbb B[U_1,U_2]|
        \le \delta n^3.
    \end{align*}
    Let $\mathbb B^\ast\coloneqq \mathbb B[U_1,U_2]$.
    Then $\mathbb B^\ast\cong \mathbb B_n$, and hence $|\mathbb B^\ast|=|\mathbb B_n|$.
    
    The symmetric-difference bound gives
    \begin{align*}
        |\mathcal H\cap \mathbb B^\ast|
        \ge |\mathbb B^\ast|-|\mathcal H\triangle \mathbb B^\ast|
        \ge |\mathbb B_n|-\delta n^3.
    \end{align*}
    Since $V_1\cup V_2$ is a maximum partition, it follows that
    \begin{align}
        |\mathcal H[V_1,V_2]|\ge |\mathcal H\cap \mathbb B^\ast|\ge |\mathbb B_n|-\delta n^3. \label{equ:common-near-bipartite-crossing-lower}
    \end{align}
    
    Since $|V_1|+|V_2|=n$, we have
    \begin{align*}
        |\mathbb B[V_1,V_2]|=\frac{|V_1||V_2|(n-2)}{2}.
    \end{align*}
    This is maximized when the partition is balanced. Hence, we have $|\mathbb B[V_1,V_2]|\le |\mathbb B_n|$. 
    It follows that
    \begin{align*}
        |\mathcal M|
        =
        |\mathbb B[V_1,V_2]|-|\mathcal H[V_1,V_2]|
        \le |\mathbb B_n|-\bigl(|\mathbb B_n|-\delta n^3\bigr)
        =
        \delta n^3.
    \end{align*}
    Also,
    \begin{align*}
        |\mathcal H|
        \le |\mathbb B^\ast|+|\mathcal H\setminus \mathbb B^\ast|
        \le |\mathbb B_n|+|\mathcal H\triangle \mathbb B^\ast|
        \le |\mathbb B_n|+\delta n^3.
    \end{align*}
    It follows that
    \begin{align*}
        |\mathcal B|
        =
        |\mathcal H|-|\mathcal H[V_1,V_2]|
        \le \bigl(|\mathbb B_n|+\delta n^3\bigr)-\bigl(|\mathbb B_n|-\delta n^3\bigr)
        =
        2\delta n^3.
    \end{align*}
    This proves part~\ref{LEMMA:common-near-bipartite-setup-bm}.
    
    It remains to bound the two parts.
    Since $|\mathbb B_n|=\frac{n-2}{2}\lfloor n^2/4\rfloor$ and $\mathcal H[V_1,V_2]\subseteq \mathbb B[V_1,V_2]$, the lower bound in~\eqref{equ:common-near-bipartite-crossing-lower} gives
    \begin{align*}
        \frac{|V_1||V_2|(n-2)}{2}
        \ge \frac{n-2}{2}\left\lfloor\frac{n^2}{4}\right\rfloor-\delta n^3.
    \end{align*}
    Equivalently,
    \begin{align*}
        \left\lfloor\frac{n^2}{4}\right\rfloor-|V_1||V_2|\le \frac{2\delta n^3}{n-2}.
    \end{align*}
    Hence
    \begin{align*}
        \frac{n^2}{4}-|V_1||V_2|
        =\left(\frac{n^2}{4}-\left\lfloor\frac{n^2}{4}\right\rfloor\right)
        +\left(\left\lfloor\frac{n^2}{4}\right\rfloor-|V_1||V_2|\right)
        \le \frac14+\frac{2\delta n^3}{n-2}.
    \end{align*}
    Since $|V_2|=n-|V_1|$, this implies
    \begin{align*}
        \left(|V_1|-\frac n2\right)^2
        =
        \frac{n^2}{4}-|V_1|(n-|V_1|)
        =
        \frac{n^2}{4}-|V_1||V_2|
        \le \frac14+\frac{2\delta n^3}{n-2}
        \le 100\delta n^2.
    \end{align*}
    The last inequality holds because $\delta$ is sufficiently small and $n$ is sufficiently large. Therefore $\bigl||V_1|-n/2\bigr|\le 10\delta^{1/2}n$. The same estimate holds for $V_2$, since $|V_2|=n-|V_1|$. Hence part~\ref{LEMMA:common-near-bipartite-setup-parts} follows.
    
    Finally,
    \begin{align*}
        \bigl||\mathcal H|-|\mathbb B_n|\bigr|
        =
        \bigl||\mathcal H|-|\mathbb B^\ast|\bigr|
        \le |\mathcal H\triangle \mathbb B^\ast|
        \le \delta n^3.
    \end{align*}
    This proves part~\ref{LEMMA:common-near-bipartite-setup-size}, and completes the proof.
\end{proof}

We next record the colored removal lemma in the form needed below.

\begin{lemma}[Colored Graph Removal Lemma~\cite{Fox11}]\label{LEM:common-colored-removal}
    For every $\varepsilon>0$ and every pair of positive integers $f,k$, there exists $\delta_{\ref{LEM:common-colored-removal}}\coloneqq\delta_{\ref{LEM:common-colored-removal}}(\varepsilon,f,k)>0$ such that the following holds.
    Let $\phi:E(F)\to[k]$ be a $k$-edge-coloring of a graph $F$ on $f$ vertices, and let $\psi:E(G)\to[k]$ be a $k$-edge-coloring of a graph $G$ on $n$ vertices.
    If the number of copies of $F$ with coloring $\phi$ in the coloring $\psi$ of $G$ is at most $\delta_{\ref{LEM:common-colored-removal}}n^f$, then all such copies can be removed by deleting at most $\varepsilon n^2$ edges of $G$.
\end{lemma}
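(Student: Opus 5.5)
The plan is the standard regularity-lemma argument, run simultaneously on all $k$ color classes. Write $G_c\coloneqq\psi^{-1}(c)$ for $c\in[k]$, so $G=G_1\cup\cdots\cup G_k$ is an edge-disjoint decomposition. Fix $\varepsilon'\ll\varepsilon$, chosen small in terms of $\varepsilon$, $f$ and $k$. Apply the multicolor version of Szemerédi's regularity lemma to $(G_1,\ldots,G_k)$ with parameter $\varepsilon'$. This gives an equipartition $V(G)=W_0\cup W_1\cup\cdots\cup W_M$ with $|W_0|\le\varepsilon' n$ and $1/\varepsilon'\le M\le M_0(\varepsilon',k)$, such that all but $\varepsilon' M^2$ pairs $(W_a,W_b)$ are $\varepsilon'$-regular in every color $G_c$ simultaneously.

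Next I clean up $G$ by deleting edges in four classes:
\begin{enumerate}[label=(\alph*)]
    \item edges incident to $W_0$;
    \item edges inside a single part $W_a$;
    \item edges of any color lying in a pair that is irregular in some color;
    \item edges of color $c$ in a pair $(W_a,W_b)$ whose $G_c$-density is below $\varepsilon/(4k)$.
\end{enumerate}
The standard estimates bound these deletions by $\varepsilon' n^2$, $n^2/M$, $k\varepsilon' n^2$ and $\varepsilon n^2/4$, respectively. The total is at most $\varepsilon n^2$ once $\varepsilon'$ is small enough.

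Suppose now that the cleaned graph $G'$ still contains a copy of $F$ whose coloring under $\psi$ agrees with $\phi$. Map each vertex $x\in V(F)$ to the index of the part containing its image. Because of step (b), adjacent vertices of $F$ go to distinct parts. Because of steps (c) and (d), every edge $xy$ of $F$ is sent to a pair that is $\varepsilon'$-regular with $G_{\phi(xy)}$-density at least $\varepsilon/(4k)$. Nonadjacent vertices of $F$ may go to the same part; this is harmless.

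Apply the graph counting lemma for each edge $xy$ of $F$ to the relevant color class $G_{\phi(xy)}$ between the assigned parts. This gives at least $\bigl(\tfrac{\varepsilon}{4k}\bigr)^{e(F)}\tfrac12\bigl(\tfrac{n}{2M_0}\bigr)^{f}$ homomorphic images of $F$ with the prescribed coloring. Here I take vertex $x$ from its assigned part and require the edge $xy$ to receive color $\phi(xy)$. Non-injective maps account for only $O(n^{f-1})$ of these. Hence for large $n$ there are at least $c(\varepsilon,f,k)n^f$ genuine $\phi$-colored copies of $F$ in $G'\subseteq G$. For small $n$, we may take $\delta<n^{-f}$, which forces zero copies. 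Choosing $\delta_{\ref{LEM:common-colored-removal}}<c(\varepsilon,f,k)$ then contradicts the hypothesis, so $G'$ contains no $\phi$-colored copy of $F$.

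The main obstacle is the counting step. The counting lemma must be applied with different color classes on different edges of $F$, and with parts possibly repeated. Its error term is about $e(F)\,\varepsilon'$ times a product of parts. This must be beaten by the main term $(\varepsilon/4k)^{e(F)}$, which forces $\varepsilon'\ll(\varepsilon/4k)^{e(F)}$. I would state the counting lemma in its general homomorphism form, with a separate bipartite graph for each edge of $F$, to sidestep both issues. Alternatively, since the statement is exactly Fox's colored removal lemma~\cite{Fox11}, one may simply invoke it.
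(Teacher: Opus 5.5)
Your argument is correct. Note that the paper gives no proof of this lemma: it simply imports it from Fox~\cite{Fox11}, which is the option you mention at the end. Fox's proof takes a genuinely different route. It avoids Szemer\'edi's regularity lemma altogether, using a more economical energy (mean-square density) increment, and bounds $\delta^{-1}$ by a tower whose height is only logarithmic in $1/\varepsilon$.

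Your route is the classical one: multicolor regularity, the four cleaning steps, and the counting lemma in its general homomorphism form. In that form each edge of $F$ carries its own bipartite graph, and non-adjacent vertices may share a part. This yields $\delta^{-1}$ of tower type with height polynomial in $1/\varepsilon$. The paper uses only the qualitative existence of $\delta$ (with $f=4$, $k=3$, in Lemma~\ref{LEMMA:common-balanced-k4-missing}), so your version is fully sufficient and is self-contained from standard tools.

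Only some bookkeeping needs tidying:
\begin{itemize}
\item For small $n$, take $\delta<n_0^{-f}$, where $n_0=n_0(\varepsilon,f,k)$ is the threshold beyond which the $O(n^{f-1})$ non-injective maps are negligible. As written, $\delta<n^{-f}$ would make $\delta$ depend on $n$.
\item Isolated vertices of $F$ whose images land in $W_0$ should be reassigned to an arbitrary part $W_a$ before counting.
\item Passing from injective color-preserving homomorphisms to copies costs a factor of at most $f!$. Absorb this into $c(\varepsilon,f,k)$, and take the minimum over the finitely many pairs $(F,\phi)$ on $f$ vertices, so that $\delta$ depends only on $(\varepsilon,f,k)$.
\end{itemize}
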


Let $G$ be a graph, and let $X\cup Y$ be a bipartition of $V(G)$. A copy of $K_4$ in $G$ is balanced with respect to the partition $X\cup Y$ if it has two vertices in $X$ and two vertices in $Y$.
The following lemma explains why balanced $K_4$'s in the link graphs are controlled by missing edges.

\begin{lemma}\label{LEMMA:common-balanced-k4-missing}
    For every $\varepsilon>0$, there exist $\delta_{\ref{LEMMA:common-balanced-k4-missing}}\coloneqq\delta_{\ref{LEMMA:common-balanced-k4-missing}}(\varepsilon)>0$ and $N_{\ref{LEMMA:common-balanced-k4-missing}}\coloneqq N_{\ref{LEMMA:common-balanced-k4-missing}}(\varepsilon)$ such that the following holds for all $n\ge N_{\ref{LEMMA:common-balanced-k4-missing}}$.
    Suppose that $\mathcal H$ is an $n$-vertex $K_5^3$-free $3$-graph which is $\delta_{\ref{LEMMA:common-balanced-k4-missing}}$-close to $\mathbb B_n$.
    Let $V_1\cup V_2=V(\mathcal H)$ be a maximum partition of $\mathcal H$, and let $\mathcal M$ be the missing edges with respect to this partition.
    For $i\in[2]$ and $v\in V_i$, let $L_v\coloneqq L_{\mathcal H}(v)$, and let $\mathcal T_v$ be the family of all balanced copies of $K_4$ in $L_v$ with respect to the partition $V_1\cup V_2$.
    Then
    \begin{align*}
        |\mathcal T_v|\le |\mathcal M|n\le 2\delta_{\ref{LEMMA:common-balanced-k4-missing}}n^4.
    \end{align*}
    Moreover, by deleting at most $\varepsilon n^2$ edges from $L_v$, we obtain a graph which is balanced $K_4$-free with respect to the partition $V_1 \cup V_2$.
\end{lemma}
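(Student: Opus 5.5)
The plan is to show that every balanced $K_4$ in $L_v$ produces a missing triple through a fixed-size combinatorial map, and then to apply Lemma~\ref{LEM:common-colored-removal}.

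Fix $i\in[2]$, $v\in V_i$, and a balanced copy $K\in\mathcal T_v$ with vertex set $\{x_1,x_2,y_1,y_2\}$, where $x_1,x_2\in V_1$ and $y_1,y_2\in V_2$. Every pair inside $\{x_1,x_2,y_1,y_2\}$ is an edge of $L_v$, so all six triples $v\cup e$ with $e\in\binom{V(K)}{2}$ belong to $\mathcal H$. Because $\mathcal H$ is $K_5^3$-free, the set $\{v\}\cup V(K)$ cannot span a $K_5^3$. Hence at least one of the four triples inside $V(K)$ is missing from $\mathcal H$. Each of these triples has two vertices on one side and one on the other, so it lies in $\mathbb B[V_1,V_2]$. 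A triple of $\mathbb B[V_1,V_2]$ not in $\mathcal H$ is by definition in $\mathcal M$.

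This gives a map from $\mathcal T_v$ to $\mathcal M$, sending $K$ to some missing triple $T\subseteq V(K)$. The fiber over any $T\in\mathcal M$ is determined by the fourth vertex of $V(K)\setminus T$, so it has size at most $n$. Therefore $|\mathcal T_v|\le |\mathcal M|\,n$. Lemma~\ref{LEMMA:common-near-bipartite-setup}\ref{LEMMA:common-near-bipartite-setup-bm} gives $|\mathcal M|\le 2\delta n^3$, with $\delta=\delta_{\ref{LEMMA:common-balanced-k4-missing}}$ taken small enough for that lemma to apply. This yields the bound $2\delta n^4$.

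For the removal statement, encode the partition by edge colors. Give each edge of $L_v$ color $1$ if it lies inside $V_1$, color $2$ if inside $V_2$, and color $3$ if it crosses. Let $F=K_4$ on vertices $a_1,a_2,b_1,b_2$, colored by $\phi(a_1a_2)=1$, $\phi(b_1b_2)=2$, and color $3$ on the four cross edges. Every copy of $(F,\phi)$ in the colored $L_v$ is a balanced $K_4$ with respect to $V_1\cup V_2$. Conversely, each balanced $K_4$ gives a bounded number ($\le 4$) of labeled copies. The colored copies therefore number at most $4|\mathcal T_v|\le 8\delta n^4$.

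Apply Lemma~\ref{LEM:common-colored-removal} with $f=4$ and $k=3$. One caveat: $L_v$ is a graph on $V(\mathcal H)$, which has $n$ vertices, so the lemma's $n$ matches ours. Choose $\delta_{\ref{LEMMA:common-balanced-k4-missing}}\le \delta_{\ref{LEM:common-colored-removal}}(\varepsilon,4,3)/8$, and also small enough for Lemma~\ref{LEMMA:common-near-bipartite-setup}. Choose $N_{\ref{LEMMA:common-balanced-k4-missing}}$ large enough for Lemma~\ref{LEMMA:common-near-bipartite-setup} as well.

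The removal lemma then deletes at most $\varepsilon n^2$ edges of $L_v$ and destroys all colored copies. After the deletion no balanced $K_4$ remains, since any remaining one would be a colored copy of $(F,\phi)$.

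No step is a genuine obstacle. The only care needed is to check that the triples inside a balanced $K_4$ all lie in $\mathbb B[V_1,V_2]$, which holds because each has vertices on both sides. One must also keep the constants consistent with Lemma~\ref{LEMMA:common-near-bipartite-setup}.
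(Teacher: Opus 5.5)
Your proposal is correct and follows the paper's proof essentially step for step. Each balanced $K_4$ in $L_v$ contains a triple of $\mathcal M$; this implicitly uses $v\notin V(K)$, which holds because $v$ is isolated in its own link graph, and the paper states this explicitly. Charging by the fourth vertex then gives $|\mathcal T_v|\le|\mathcal M|n\le 2\delta n^4$, and the three-colored removal lemma with $f=4$, $k=3$ finishes the argument. Your count of at most $4$ labeled colored copies per balanced $K_4$ is sharper than the paper's crude $4!=24$, which is why your threshold $\delta\le\delta_{\ref{LEM:common-colored-removal}}(\varepsilon,4,3)/8$ works where the paper needs $48\delta\le\delta_{\ref{LEM:common-colored-removal}}(\varepsilon,4,3)$.
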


\begin{proof}[Proof of Lemma~\ref{LEMMA:common-balanced-k4-missing}]
    Fix $\varepsilon>0$, and set $\eta\coloneqq\delta_{\ref{LEM:common-colored-removal}}(\varepsilon,4,3)$.
    Choose $\delta_{\ref{LEMMA:common-balanced-k4-missing}}>0$ sufficiently small so that Lemma~\ref{LEMMA:common-near-bipartite-setup} applies and $48\delta_{\ref{LEMMA:common-balanced-k4-missing}}\le\eta$.
    Let $N_{\ref{LEMMA:common-balanced-k4-missing}}$ be sufficiently large.
    Write $\delta\coloneqq\delta_{\ref{LEMMA:common-balanced-k4-missing}}$.
    By Lemma~\ref{LEMMA:common-near-bipartite-setup}, we have
    \begin{align*}
        |\mathcal M|\le 2\delta n^3.
    \end{align*}
    
    Fix $v\in V(\mathcal H)$. By symmetry, we may assume that $v\in V_1$, and write $L\coloneqq L_{\mathcal H}(v)$.
    Let $\mathcal T$ be the family of all balanced copies of $K_4$ in $L$ with respect to the partition $V_1\cup V_2$.
    
    We first show that every $Q\in\mathcal T$ contains a missing triple.
    Fix $Q\in\mathcal T$.
    Since $v$ is isolated in $L$, the copy $Q$ does not contain $v$.
    Since $Q$ is a clique in $L$, every pair $xy\in\binom{Q}{2}$ satisfies $vxy\in\mathcal H$.
    Hence all six triples containing $v$ and two vertices of $Q$ are edges of $\mathcal H$.
    Since $Q$ has two vertices in $V_1$ and two vertices in $V_2$, we also have
    \begin{align*}
        \binom{Q}{3}\subseteq \mathbb B[V_1,V_2].
    \end{align*}
    If $\binom{Q}{3}\cap\mathcal M=\emptyset$, then $\binom{Q}{3}\subseteq \mathcal H$. 
    Thus all triples on $Q\cup\{v\}$ belong to $\mathcal H$, which gives a copy of $K_5^3$, a contradiction.
    Therefore
    \begin{align*}
        \binom{Q}{3}\cap\mathcal M\ne\emptyset
        \qquad\text{for every }Q\in\mathcal T.
    \end{align*}
    
    For each $Q\in\mathcal T$, choose one triple $F_Q\in\binom{Q}{3}\cap\mathcal M$, which exists by the previous paragraph.
    Fix $F\in\mathcal M$.
    If $v\in F$, then no such $Q$ is charged to $F$.
    Otherwise, since $F\subseteq\mathbb B[V_1,V_2]$, the triple $F$ has type $V_1V_1V_2$ or $V_1V_2V_2$.
    In either case, $F$ can be extended to a balanced $4$-set in at most $n$ ways.
    Therefore,
    \begin{align}
        |\mathcal T|\le |\mathcal M|n\le 2\delta n^4. \label{equ:common-balanced-k4-count}
    \end{align}
    
    Color the edges of $L$ with three colors according as they lie inside $V_1$, inside $V_2$, or between $V_1$ and $V_2$.
    A balanced copy of $K_4$ is exactly an underlying copy of the colored $K_4$ with one edge inside $V_1$, one edge inside $V_2$, and four cross edges.
    Each balanced copy gives at most $4!$ such colored copies under labelled conventions.
    It follows from~\eqref{equ:common-balanced-k4-count} and the choice of $\delta_{\ref{LEMMA:common-balanced-k4-missing}}$ that the number of copies of the colored $K_4$ described above in $L$ is at most
    \begin{align*}
        24|\mathcal T|\le 48\delta n^4\le \eta n^4.
    \end{align*}
    Applying Lemma~\ref{LEM:common-colored-removal}, all these colored copies can be removed by deleting at most $\varepsilon n^2$ edges of $L$.
    After these deletions, no balanced copy of $K_4$ remains with respect to the partition $V_1\cup V_2$.
\end{proof}

We finish this section with the common local control of bad and missing edges.
This lemma will be applied after the two objective-specific arguments give a local degree bound for the missing and bad edges.

\begin{lemma}\label{LEMMA:common-local-bad-missing-control}
    Let $\eta>0$ be sufficiently small. There exist $\delta_{\ref{LEMMA:common-local-bad-missing-control}}\coloneqq\delta_{\ref{LEMMA:common-local-bad-missing-control}}(\eta)>0$ and $N_{\ref{LEMMA:common-local-bad-missing-control}}\coloneqq N_{\ref{LEMMA:common-local-bad-missing-control}}(\eta)$ such that the following holds for all $n\ge N_{\ref{LEMMA:common-local-bad-missing-control}}$.
    Suppose that $\mathcal H$ is an $n$-vertex $K_5^3$-free $3$-graph which is $\delta$-close to $\mathbb B_n$ for some $0<\delta\le\delta_{\ref{LEMMA:common-local-bad-missing-control}}$.
    Let $V_1\cup V_2=V(\mathcal H)$ be a maximum partition of $\mathcal H$, and let $\mathcal B$ and $\mathcal M$ be the bad and missing edges with respect to this partition.
    Suppose further that
    \begin{align*}
        \max\{\Delta(\mathcal B),\Delta(\mathcal M)\}\le 2\eta n^2.
    \end{align*}
    Then the following statements hold.
    \begin{enumerate}[label=(\roman*), ref=(\roman*)]
        \item\label{LEMMA:common-local-bad-missing-control-heavy} every bad edge $E\in\mathcal B$ contains a pair $e\subseteq E$ such that $d_{\mathcal M}(e)\ge n/10$,
        \item\label{LEMMA:common-local-bad-missing-control-same-side} for every $e\in \overline K[V_1,V_2]$, we have
        \begin{align*}
            d_{\mathcal M}(e)\ge d_{\mathcal B}(e)-7\eta^{1/2}n.
        \end{align*}
    \end{enumerate}
\end{lemma}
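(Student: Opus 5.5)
The plan is to use $K_5^3$-freeness directly on $5$-sets that contain a bad edge (or a bad edge through $e$) together with two vertices on the other side. All nine remaining triples of such a $5$-set lie in $\mathbb B[V_1,V_2]$, so one of them must be missing. The degree hypothesis $\max\{\Delta(\mathcal B),\Delta(\mathcal M)\}\le 2\eta n^2$ makes the missing triples of the form $\{u,w_1,w_2\}$ (one vertex $u$ of the bad edge, two outside vertices) negligible. Throughout, Lemma~\ref{LEMMA:common-near-bipartite-setup}\ref{LEMMA:common-near-bipartite-setup-parts} gives $\bigl||V_i|-n/2\bigr|\le 10\delta^{1/2}n$, and we take $\delta\le\delta_{\ref{LEMMA:common-local-bad-missing-control}}$ tiny compared with $\eta$.

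\emph{Part~\ref{LEMMA:common-local-bad-missing-control-heavy}.} Let $E=\{x,y,z\}\in\mathcal B$, and say $E\subseteq V_1$. Suppose, for contradiction, that $d_{\mathcal M}(xy),d_{\mathcal M}(xz),d_{\mathcal M}(yz)<n/10$. Let $W\subseteq V_2$ consist of those $w$ for which $xyw,xzw,yzw\in\mathcal H$. Then
\begin{align*}
|W|\ge |V_2|-\tfrac{3n}{10}\ge \tfrac{n}{5}-10\delta^{1/2}n\ge \tfrac{n}{6}.
\end{align*}
A pair $\{w_1,w_2\}\subseteq W$ is \emph{blocked} if some triple $uw_1w_2$ with $u\in E$ is missing. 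Each $u\in E$ lies in at most $d_{\mathcal M}(u)\le 2\eta n^2$ missing triples, so the number of blocked pairs is at most $3\cdot 2\eta n^2=6\eta n^2$. This is smaller than $\binom{|W|}{2}\ge n^2/80$ once $\eta$ is small. Hence some pair $\{w_1,w_2\}\subseteq W$ is unblocked. Together with $xyz\in\mathcal H$, all ten triples of $\{x,y,z,w_1,w_2\}$ are then edges of $\mathcal H$, which is a copy of $K_5^3$, a contradiction.

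\emph{Part~\ref{LEMMA:common-local-bad-missing-control-same-side}.} Let $e=xy\subseteq V_1$, and let $W\coloneqq N_{\mathcal B}(e)\subseteq V_1$, so that $|W|=d_{\mathcal B}(e)$. Set $U\coloneqq V_2\setminus N_{\mathcal M}(e)$, so that $d_{\mathcal M}(e)=|V_2|-|U|$. Fix $w\in W$, and let
\begin{align*}
U_w\coloneqq U\setminus\bigl(N_{\mathcal M}(xw)\cup N_{\mathcal M}(yw)\bigr).
\end{align*}
Running the argument of part~\ref{LEMMA:common-local-bad-missing-control-heavy} for the bad edge $xyw$ inside $U_w$ shows that $\binom{|U_w|}{2}\le 6\eta n^2$, that is, $|U_w|\le 4\eta^{1/2}n$. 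Therefore
\begin{align*}
d_{\mathcal M}(xw)+d_{\mathcal M}(yw)\ge |U|-4\eta^{1/2}n
\end{align*}
for every $w\in W$. Summing over $w\in W$: each missing triple $xwu$ with $u\in V_2$ is counted for only one $w$, and similarly for $y$. Hence
\begin{align*}
|W|\bigl(|U|-4\eta^{1/2}n\bigr)\le d_{\mathcal M}(x)+d_{\mathcal M}(y)\le 4\eta n^2.
\end{align*}
We now split into two cases.
\begin{itemize}
\item If $|U|\ge 6\eta^{1/2}n$, then $|W|\le 2\eta^{1/2}n$, so $d_{\mathcal B}(e)-7\eta^{1/2}n<0\le d_{\mathcal M}(e)$.
\item Otherwise,
\begin{align*}
d_{\mathcal M}(e)\ge |V_2|-6\eta^{1/2}n\ge |V_1|-20\delta^{1/2}n-6\eta^{1/2}n\ge d_{\mathcal B}(e)-7\eta^{1/2}n,
\end{align*}
using $d_{\mathcal B}(e)\le|V_1|$ and $20\delta^{1/2}\le\eta^{1/2}$.
\end{itemize}
The case $e\subseteq V_2$ is symmetric.

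The main obstacle is part~\ref{LEMMA:common-local-bad-missing-control-same-side}. A naive count gives no relation between $d_{\mathcal M}(e)$ and $d_{\mathcal B}(e)$. The key is to sum the per-$w$ lower bound on $d_{\mathcal M}(xw)+d_{\mathcal M}(yw)$ against the global degree bound $\Delta(\mathcal M)\le 2\eta n^2$. The case $|U|$ small then has to be closed using the near-balance of the parts, which is why $\delta$ must be chosen small relative to $\eta$.
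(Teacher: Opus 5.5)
Your proof is correct and is essentially the paper's argument. Part~(i) is the same blocking count. In part~(ii) you use the same $K_5^3$ on $e\cup\{w,u,u'\}$ and the same bound $\sum_{w\in N_{\mathcal B}(e)}\bigl(d_{\mathcal M}(xw)+d_{\mathcal M}(yw)\bigr)\le d_{\mathcal M}(x)+d_{\mathcal M}(y)$, but you apply the blocking count at every $w$ and sum, which gives the explicit trade-off $d_{\mathcal B}(e)\bigl(|U|-4\eta^{1/2}n\bigr)\le 4\eta n^2$; the paper instead first averages to pick one good $w$ and argues by contradiction.

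One harmless technicality: apply Lemma~\ref{LEMMA:common-near-bipartite-setup} with parameter $\delta_{\ref{LEMMA:common-local-bad-missing-control}}$, since $\mathcal H$ is also $\delta_{\ref{LEMMA:common-local-bad-missing-control}}$-close, rather than with the given $\delta$. That lemma needs $n$ large in terms of its parameter, while your $N$ may depend only on $\eta$.
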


\begin{proof}[Proof of Lemma~\ref{LEMMA:common-local-bad-missing-control}]
    Choose $\delta_{\ref{LEMMA:common-local-bad-missing-control}}>0$ sufficiently small so that Lemma~\ref{LEMMA:common-near-bipartite-setup} applies and
    \begin{align*}
        20\delta_{\ref{LEMMA:common-local-bad-missing-control}}^{1/2}\le \eta^{1/2}\qquad\text{and}\qquad 10\delta_{\ref{LEMMA:common-local-bad-missing-control}}^{1/2}\le \frac1{20}.
    \end{align*}
    Since $\eta$ is sufficiently small, we take $N_{\ref{LEMMA:common-local-bad-missing-control}}$ sufficiently large so that, for all $n\ge N_{\ref{LEMMA:common-local-bad-missing-control}}$ and every integer $t$ with $t>n/10$ or $t>5\eta^{1/2}n$, we have
    \begin{align*}
        \binom{t}{2}>6\eta n^2.
    \end{align*}
    Since $\mathcal H$ is $\delta$-close to $\mathbb B_n$ and $\delta\le \delta_{\ref{LEMMA:common-local-bad-missing-control}}$, it is also $\delta_{\ref{LEMMA:common-local-bad-missing-control}}$-close to $\mathbb B_n$.
    By Lemma~\ref{LEMMA:common-near-bipartite-setup} with parameter $\delta_{\ref{LEMMA:common-local-bad-missing-control}}$, for each $i\in[2]$ we have
    \begin{align}
        \left||V_i|-\frac n2\right|\le 10\delta_{\ref{LEMMA:common-local-bad-missing-control}}^{1/2}n\qquad\text{and}\qquad \bigl||V_1|-|V_2|\bigr|
        \le 20\delta_{\ref{LEMMA:common-local-bad-missing-control}}^{1/2}n 
        \le \eta^{1/2}n, \label{equ:common-local-part-balance}
    \end{align}
    where $\bigl||V_1|-|V_2|\bigr|\le 20\delta_{\ref{LEMMA:common-local-bad-missing-control}}^{1/2}n$ follows from the first bound, and the final inequality is by the choice of $\delta_{\ref{LEMMA:common-local-bad-missing-control}}$.

    We first prove~\ref{LEMMA:common-local-bad-missing-control-heavy}.
    Fix a bad edge $E=\{x,y,z\}\in\mathcal B$.
    Since $E\notin \mathbb B[V_1,V_2]$, the edge $E$ is contained in one part.
    By symmetry, we may assume that $E\subseteq V_1$.
    Suppose, for a contradiction, that every pair $e\subseteq E$ satisfies $d_{\mathcal M}(e)<n/10$.
    Let
    \begin{align*}
        W\coloneqq \{w\in V_2\colon xyw,xzw,yzw\notin\mathcal M\}.
    \end{align*}
    For every $w\in V_2$, the triples $xyw,xzw,yzw$ all belong to $\mathbb B[V_1,V_2]$.
    Thus each of these triples that is not in $\mathcal M$ belongs to $\mathcal H$.
    By~\eqref{equ:common-local-part-balance} and the definition of $W$, we obtain
    \begin{align*}
        |W|
        \ge |V_2|-d_{\mathcal M}(xy)-d_{\mathcal M}(xz)-d_{\mathcal M}(yz)
        > \frac n2-10\delta_{\ref{LEMMA:common-local-bad-missing-control}}^{1/2}n-\frac{3n}{10}
        > \frac n{10}.
    \end{align*}
    Fix distinct vertices $w,w'\in W$.
    By the definition of $W$, all triples among $x,y,z,w,w'$ which contain two vertices from $\{x,y,z\}$ and one vertex from $\{w,w'\}$ are edges of $\mathcal H$.
    Also $xyz\in\mathcal H$, since $E\in\mathcal B$.
    If none of $xww',yww',zww'$ lies in $\mathcal M$, then these three triples also belong to $\mathcal H$ because they have type $V_1V_2V_2$.
    Hence $\{x,y,z,w,w'\}$ spans a copy of $K_5^3$, a contradiction.
    Therefore,
    \begin{align*}
        \{xww',yww',zww'\}\cap\mathcal M\ne\emptyset
        \qquad\text{for every }\{w,w'\}\in\binom W2.
    \end{align*}
    Choosing one such missing triple for each pair $\{w,w'\}$ gives an injection into the set of missing triples containing at least one of $x,y,z$, since the chosen triple determines the pair $\{w,w'\}$.
    Thus, by the choice of $N_{\ref{LEMMA:common-local-bad-missing-control}}$,
    \begin{align*}
        6\eta n^2
        <\binom{|W|}{2}
        \le d_{\mathcal M}(x)+d_{\mathcal M}(y)+d_{\mathcal M}(z) 
        \le 3\Delta(\mathcal M)
        \le 6\eta n^2,
    \end{align*}
    a contradiction.
    This proves~\ref{LEMMA:common-local-bad-missing-control-heavy}.

    We now prove~\ref{LEMMA:common-local-bad-missing-control-same-side}.
    Let $e\in\overline K[V_1,V_2]$.
    By symmetry, write $e=\{u_1,u_2\}\subseteq V_1$.
    Set
    \begin{align*}
        N_1\coloneqq N_{\mathcal H}(e)\cap V_1\qquad\text{and}\qquad N_2\coloneqq N_{\mathcal H}(e)\cap V_2.
    \end{align*}
    Then
    \begin{align*}
        d_{\mathcal B}(e)=|N_1|\qquad\text{and}\qquad d_{\mathcal M}(e)=|V_2|-|N_2|.
    \end{align*}
    If $|N_1|\le 6\eta^{1/2}n$, then $d_{\mathcal M}(e)\ge 0\ge |N_1|-7\eta^{1/2}n$, and the desired inequality follows.
    If $|N_2|\le 6\eta^{1/2}n$, then~\eqref{equ:common-local-part-balance} gives
    \begin{align*}
        d_{\mathcal M}(e)
        =|V_2|-|N_2|
        \ge |V_1|-\eta^{1/2}n-6\eta^{1/2}n
        \ge |N_1|-7\eta^{1/2}n,
    \end{align*}
    and again the desired inequality follows.
    Therefore, we may assume that
    \begin{align*}
        |N_1|>6\eta^{1/2}n\qquad\text{and}\qquad |N_2|>6\eta^{1/2}n.
    \end{align*}

    We claim that there exists $y\in N_1$ such that
    \begin{align*}
        d_{\mathcal M}(u_1y)+d_{\mathcal M}(u_2y)\le \eta^{1/2}n.
    \end{align*}
    Indeed, otherwise
    \begin{align*}
        \sum_{y\in N_1}\bigl(d_{\mathcal M}(u_1y)+d_{\mathcal M}(u_2y)\bigr)
        > |N_1|\eta^{1/2}n
        > 6\eta n^2.
    \end{align*}
    On the other hand, for each $j\in[2]$, every missing triple counted by $\sum_{y\in N_1}d_{\mathcal M}(u_jy)$ is of the form $u_jyw$ with $y\in N_1$ and $w\in V_2$. Hence it is counted for at most one vertex $y\in N_1$.
    It follows that
    \begin{align*}
        \sum_{y\in N_1}\bigl(d_{\mathcal M}(u_1y)+d_{\mathcal M}(u_2y)\bigr)
        \le d_{\mathcal M}(u_1)+d_{\mathcal M}(u_2)
        \le 2\Delta(\mathcal M)
        \le 4\eta n^2,
    \end{align*}
    a contradiction.
    This proves the claim.

    Fix such a vertex $y\in N_1$, and define
    \begin{align*}
        W\coloneqq N_2\cap N_{\mathcal H}(u_1y)\cap N_{\mathcal H}(u_2y).
    \end{align*}
    For $w\in N_2$, if $u_1yw\notin\mathcal H$, then $u_1yw\in\mathcal M$ because $u_1,y\in V_1$ and $w\in V_2$.
    The same holds for $u_2yw$.
    It follows that
    \begin{align*}
        |W|
        \ge |N_2|-d_{\mathcal M}(u_1y)-d_{\mathcal M}(u_2y)
        >5\eta^{1/2}n.
    \end{align*}
    For every $w\in W$, we have
    \begin{align*}
        u_1u_2w,\quad u_1yw,\quad u_2yw\in\mathcal H.
    \end{align*}
    Since $y\in N_1$, we also have $u_1u_2y\in\mathcal H$.
    Fix distinct vertices $w,w'\in W$.
    If none of $u_1ww',u_2ww',yww'$ lies in $\mathcal M$, then all these triples belong to $\mathcal H$ because they have type $V_1V_2V_2$.
    Hence all triples on $\{u_1,u_2,y,w,w'\}$ belong to $\mathcal H$, giving a copy of $K_5^3$, a contradiction.
    Thus,
    \begin{align*}
        \{u_1ww',u_2ww',yww'\}\cap\mathcal M\ne\emptyset
        \qquad\text{for every }\{w,w'\}\in\binom W2.
    \end{align*}
    Choosing one such missing triple for each pair $\{w,w'\}$ gives an injection into the set of missing triples containing at least one of $u_1,u_2,y$, since the chosen triple determines the pair $\{w,w'\}$.
    Hence, by the choice of $N_{\ref{LEMMA:common-local-bad-missing-control}}$,
    \begin{align*}
        6\eta n^2
        <\binom{|W|}{2}
        \le d_{\mathcal M}(u_1)+d_{\mathcal M}(u_2)+d_{\mathcal M}(y)
        \le 3\Delta(\mathcal M)
        \le 6\eta n^2,
    \end{align*}
    a contradiction.
    Therefore $d_{\mathcal M}(e)\ge d_{\mathcal B}(e)-7\eta^{1/2}n$ for every $e\in\overline K[V_1,V_2]$.
    This proves~\ref{LEMMA:common-local-bad-missing-control-same-side}, and completes the proof of Lemma~\ref{LEMMA:common-local-bad-missing-control}.
\end{proof}

%
\section{The \texorpdfstring{$\ell_2$}{l2}-norm Tur\'{a}n number of \texorpdfstring{$K_5^3$}{K53}}\label{SEC:K53-L2norm}
In this section, we prove Theorem~\ref{THM:L2-exact-K53}. Starting from the common structure near $\mathbb B_n$ obtained in Section~\ref{SEC:Common-Structure}, we first derive the link estimates needed for the $\ell_2$-argument and then use them to obtain local control on bad and missing edges. Finally, we apply a local modification procedure to show that the $\ell_2$-objective increases under suitable local changes whenever a bad edge remains. This forces every extremal example to be the balanced bipartite construction.

\subsection{Colored graph estimates for links}\label{SUBSEC:K53-L2-link-estimates}

This subsection records the two colored graph estimates which will be applied to link graphs. Recall that, once a bipartition is fixed, balanced copies of $K_4$ are taken with respect to that bipartition, and $\norm{G}_2$ denotes the sum of squared degrees of a graph $G$.

\begin{lemma}\label{LEMMA:K53-L2-link-bound}
    For every $\varepsilon>0$, there exist $\delta_{\ref{LEMMA:K53-L2-link-bound}}\coloneqq \delta_{\ref{LEMMA:K53-L2-link-bound}}(\varepsilon)>0$ and $N_{\ref{LEMMA:K53-L2-link-bound}}\coloneqq N_{\ref{LEMMA:K53-L2-link-bound}}(\varepsilon)$ such that the following holds for all $n\ge N_{\ref{LEMMA:K53-L2-link-bound}}$.
    Let $G$ be a graph with a bipartition $V_1\cup V_2=V(G)$ and $|V(G)|=n$.
    Suppose that
    \begin{align*}
        \max_{i\in[2]}\left||V_i|-\frac n2\right|\le \delta_{\ref{LEMMA:K53-L2-link-bound}} n
    \end{align*}
    and that by deleting at most $\delta_{\ref{LEMMA:K53-L2-link-bound}}n^2$ edges from $G$, we obtain a graph which is balanced $K_4$-free with respect to the partition $V_1\cup V_2$.
    Then
    \begin{align*}
        |G|\le \left(\frac38+\varepsilon\right)n^2\qquad\text{and}\qquad \norm{G}_{2}\le \left(\frac58+\varepsilon\right)n^3.
    \end{align*}
\end{lemma}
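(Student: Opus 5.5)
The plan is to reduce to the equal-part setting of Corollary~\ref{COR:2-colored-K4-L1-reduced} and Corollary~\ref{COR:2-colored-K4-L2-norm-reduced}, and to absorb the errors into $\varepsilon$.

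First we clean the graph. Let $G'\subseteq G$ be the balanced $K_4$-free graph obtained by deleting at most $\delta n^2$ edges, where $\delta\coloneqq\delta_{\ref{LEMMA:K53-L2-link-bound}}$. Then $|G|\le |G'|+\delta n^2$. For the $\ell_2$-norm, each deleted edge $xy$ lowers $d(x)^2+d(y)^2$ by at most $2(2n)$. Summing over the deleted edges (this telescopes edge by edge) gives
\begin{align*}
    \norm{G}_2\le \norm{G'}_2+4\delta n^3.
\end{align*}
So it suffices to prove the two bounds for $G'$ with $\varepsilon/2$ in place of $\varepsilon$, provided $\delta$ is small.

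Next we equalize the parts. Let $m\coloneqq \max\{|V_1|,|V_2|\}\le n/2+\delta n$. Add $m-|V_i|\le 2\delta n$ new isolated vertices to the smaller part. Isolated vertices create no balanced $K_4$ and change neither the edge count nor the degree-square sum. The resulting graph $G''$ is balanced $K_4$-free with two parts of size exactly $m$. Corollary~\ref{COR:2-colored-K4-L1-reduced} then gives
\begin{align*}
    |G''|\le \tfrac32 m^2+4m\le \tfrac32\left(\tfrac12+\delta\right)^2 n^2+4n=\left(\tfrac38+O(\delta)\right)n^2+4n,
\end{align*}
and Corollary~\ref{COR:2-colored-K4-L2-norm-reduced} gives
\begin{align*}
    \norm{G''}_2\le 5m^3+32m^2\le 5\left(\tfrac12+\delta\right)^3n^3+32n^2=\left(\tfrac58+O(\delta)\right)n^3+32n^2.
\end{align*}
Choosing $\delta$ small in terms of $\varepsilon$ and $N$ large, so that $4n\le \varepsilon n^2/4$ and $32n^2\le\varepsilon n^3/4$, yields both claimed bounds.

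I expect no real obstacle. The only point needing care is the $\ell_2$ comparison between $G$ and $G'$. The cleanest way is to write $d_G(v)=d_{G'}(v)+r(v)$, where $\sum_v r(v)\le 2\delta n^2$ and $d\le n$, so that
\begin{align*}
    d_G(v)^2-d_{G'}(v)^2\le 2n\,r(v),
\end{align*}
and summing over $v$ gives an error of at most $4\delta n^3$. Everything else is direct substitution into the corollaries.
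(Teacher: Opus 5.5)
Your proposal is correct and is essentially the paper's own proof. You delete the at most $\delta n^2$ edges, pad the smaller part with isolated vertices up to size $m\le(\frac12+\delta)n$, apply Corollaries~\ref{COR:2-colored-K4-L1-reduced} and~\ref{COR:2-colored-K4-L2-norm-reduced}, and absorb the deleted edges using that each edge changes the degree-square sum by $O(n)$; the only difference is that your per-edge bound of $4n$ is sharper than the paper's $8n$, which is immaterial.
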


\begin{proof}[Proof of Lemma~\ref{LEMMA:K53-L2-link-bound}]
    Fix $\varepsilon>0$.
    Choose $\delta_{\ref{LEMMA:K53-L2-link-bound}}>0$ sufficiently small and $N_{\ref{LEMMA:K53-L2-link-bound}}$ sufficiently large so that, for all $n\ge N_{\ref{LEMMA:K53-L2-link-bound}}$,
    \begin{align*}
        \frac32\left(\frac12+\delta_{\ref{LEMMA:K53-L2-link-bound}}\right)^2+\delta_{\ref{LEMMA:K53-L2-link-bound}}+\frac{4\left(\frac12+\delta_{\ref{LEMMA:K53-L2-link-bound}}\right)}{n}
        &\le \frac38+\varepsilon,\\
        5\left(\frac12+\delta_{\ref{LEMMA:K53-L2-link-bound}}\right)^3+8\delta_{\ref{LEMMA:K53-L2-link-bound}}+\frac{32\left(\frac12+\delta_{\ref{LEMMA:K53-L2-link-bound}}\right)^2}{n}
        &\le \frac58+\varepsilon.
    \end{align*}
    
    Let $D$ be a set of at most $\delta_{\ref{LEMMA:K53-L2-link-bound}}n^2$ edges of $G$ such that $G_0\coloneqq G-D$ is balanced $K_4$-free with respect to the partition $V_1\cup V_2$.
    By symmetry, assume that $|V_1|\ge |V_2|$.
    Adding isolated vertices to $V_2$ in $G_0$ gives a balanced $K_4$-free graph with two parts of size $|V_1|$. This operation changes neither the number of edges nor the $\ell_2$-norm. The assumption gives
    \begin{align*}
        |V_1|\le \left(\frac12+\delta_{\ref{LEMMA:K53-L2-link-bound}}\right)n.
    \end{align*}
    Corollary~\ref{COR:2-colored-K4-L1-reduced} applied to this enlarged graph with part size $|V_1|$ gives
    \begin{align*}
        |G|
        \le |G_0|+\delta_{\ref{LEMMA:K53-L2-link-bound}} n^2
        \le \frac32 |V_1|^2+4|V_1|+\delta_{\ref{LEMMA:K53-L2-link-bound}} n^2
        \le \left(\frac32\left(\frac12+\delta_{\ref{LEMMA:K53-L2-link-bound}}\right)^2+\delta_{\ref{LEMMA:K53-L2-link-bound}}+\frac{4\left(\frac12+\delta_{\ref{LEMMA:K53-L2-link-bound}}\right)}{n}\right)n^2
        \le \left(\frac38+\varepsilon\right)n^2,
    \end{align*}
    where the last inequality follows from the choice of $\delta_{\ref{LEMMA:K53-L2-link-bound}}$ and $N_{\ref{LEMMA:K53-L2-link-bound}}$.
    
    It remains to bound the $\ell_2$-norm. Adding one edge to a graph on $n$ vertices changes the sum of squared degrees by at most $8n$. Hence adding back the deleted edges changes the $\ell_2$-norm by at most $8\delta_{\ref{LEMMA:K53-L2-link-bound}}n^3$. Applying Corollary~\ref{COR:2-colored-K4-L2-norm-reduced} to this enlarged graph with part size $|V_1|$ gives
    \begin{align*}
        \norm{G}_{2}
        &\le \norm{G_0}_{2}+8\delta_{\ref{LEMMA:K53-L2-link-bound}} n^3
        \le 5|V_1|^3+32|V_1|^2+8\delta_{\ref{LEMMA:K53-L2-link-bound}}n^3\\
        &\le \left(5\left(\frac12+\delta_{\ref{LEMMA:K53-L2-link-bound}}\right)^3+8\delta_{\ref{LEMMA:K53-L2-link-bound}}+\frac{32\left(\frac12+\delta_{\ref{LEMMA:K53-L2-link-bound}}\right)^2}{n}\right)n^3
        \le \left(\frac58+\varepsilon\right)n^3,
    \end{align*}
    where the last inequality follows from the choice of $\delta_{\ref{LEMMA:K53-L2-link-bound}}$ and $N_{\ref{LEMMA:K53-L2-link-bound}}$.
    This proves the lemma.
\end{proof}

\begin{lemma}\label{LEMMA:K53-L2-link-stability}
    For every $\varepsilon>0$, there exist $\delta_{\ref{LEMMA:K53-L2-link-stability}}\coloneqq \delta_{\ref{LEMMA:K53-L2-link-stability}}(\varepsilon)>0$ and $N_{\ref{LEMMA:K53-L2-link-stability}}\coloneqq N_{\ref{LEMMA:K53-L2-link-stability}}(\varepsilon)$ such that the following holds for all $n\ge N_{\ref{LEMMA:K53-L2-link-stability}}$.
    Let $G$ be a graph with a bipartition $V_1\cup V_2=V(G)$ and $|V(G)|=n$.
    Suppose that
    \begin{align*}
        \max_{i\in[2]}\left||V_i|-\frac n2\right|\le \delta_{\ref{LEMMA:K53-L2-link-stability}} n
    \end{align*}
    and that by deleting at most $\delta_{\ref{LEMMA:K53-L2-link-stability}}n^2$ edges from $G$, we obtain a graph which is balanced $K_4$-free with respect to the partition $V_1\cup V_2$.
    If
    \begin{align*}
        |G|+|G[V_1,V_2]|\ge \left(\frac58-\delta_{\ref{LEMMA:K53-L2-link-stability}}\right)n^2,
    \end{align*}
    then
    \begin{align*}
        |G[V_1]|\le \varepsilon n^2\qquad\text{or}\qquad |G[V_2]|\le \varepsilon n^2.
    \end{align*}
\end{lemma}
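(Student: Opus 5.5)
The plan is to pass to the balanced $K_4$-free graph $G_0$, use the edge bound to force the cross part to be almost complete, and then exhibit a balanced $K_4$ in $G_0$ if both sides are dense.

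\textbf{Setup.} Let $D$ be a set of at most $\delta n^2$ edges with $G_0\coloneqq G-D$ balanced $K_4$-free, where $\delta\coloneqq\delta_{\ref{LEMMA:K53-L2-link-stability}}$ will be chosen tiny in terms of $\varepsilon$. Deleting $D$ changes $|G|+|G[V_1,V_2]|$ by at most $2\delta n^2$. It also changes each $|G[V_i]|$ by at most $\delta n^2$. So it suffices to prove the statement for $G_0$ with slightly worse constants: assume $|G_0|+|G_0[V_1,V_2]|\ge(\tfrac58-3\delta)n^2$, and aim to show that some $|G_0[V_i]|\le \varepsilon n^2/2$.

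\textbf{Step 1: the cross part is almost complete.} Apply Lemma~\ref{LEMMA:K53-L2-link-bound} to $G_0$ with a small parameter $\varepsilon'$. This is legitimate since $G_0$ itself is balanced $K_4$-free, so zero deletions are needed, and the part sizes are balanced up to $\delta n$. It gives $|G_0|\le(\tfrac38+\varepsilon')n^2$. Hence
\begin{align*}
    |G_0[V_1,V_2]|\ge \Bigl(\frac14-3\delta-\varepsilon'\Bigr)n^2 .
\end{align*}
Since $|V_1||V_2|\le n^2/4$, the number $m$ of missing cross pairs satisfies $m\le(3\delta+\varepsilon')n^2\eqqcolon\gamma n^2$.

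\textbf{Step 2: counting balanced $K_4$'s.} Suppose for contradiction that $|G_0[V_1]|>\varepsilon n^2/2$ and $|G_0[V_2]|>\varepsilon n^2/2$. Consider pairs $(e,f)$ with $e\in G_0[V_1]$ and $f\in G_0[V_2]$; there are more than $\varepsilon^2n^4/4$ of them. Such a pair spans a balanced $K_4$ in $G_0$ unless one of the four cross pairs between $e$ and $f$ is missing. A fixed missing cross pair $ab$ with $a\in V_1$, $b\in V_2$ lies in at most $n\cdot n$ pairs $(e,f)$ with $a\in e$ and $b\in f$. So at most $m n^2\le\gamma n^4$ pairs $(e,f)$ fail. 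Choosing $\delta,\varepsilon'$ with $\gamma<\varepsilon^2/4$ leaves at least one pair $(e,f)$ with all cross edges present. This gives a balanced $K_4$ in $G_0$, a contradiction. Thus some $|G_0[V_i]|\le\varepsilon n^2/2$, and hence $|G[V_i]|\le \varepsilon n^2/2+\delta n^2\le\varepsilon n^2$.

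\textbf{Main obstacle.} The only real input is Step 1, namely the $\frac38$ edge bound from Lemma~\ref{LEMMA:K53-L2-link-bound} (ultimately Corollary~\ref{COR:2-colored-K4-L1-reduced}). The point is that the weighted quantity $|G|+|G[V_1,V_2]|$ can approach $\frac58 n^2$ only when the cross edges are nearly all present. A direct independence-number argument is not enough here, since it admits spurious near-optimal configurations with both sides half-dense. Once the cross part is almost complete, the double count in Step 2 is routine. It remains only to fix the parameters in the order $\varepsilon\to\varepsilon'\to\delta\to N$.
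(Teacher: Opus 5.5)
Your proposal is correct and follows essentially the same route as the paper: Lemma~\ref{LEMMA:K53-L2-link-bound} forces the cross part to be almost complete, and a double count over pairs of edges $(e,f)$ with $e$ inside $V_1$ and $f$ inside $V_2$ then produces a balanced $K_4$. The only cosmetic difference is that you delete $D$ first and argue entirely in $G_0$, whereas the paper keeps $G$ and charges each balanced $K_4$ to an edge of $D$, alongside the charge to missing cross pairs.
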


\begin{proof}[Proof of Lemma~\ref{LEMMA:K53-L2-link-stability}]
    Fix $\varepsilon>0$, and choose $\alpha>0$ with $0<\alpha<\varepsilon^2/4$.
    Choose $\delta_{\ref{LEMMA:K53-L2-link-stability}}>0$ sufficiently small so that
    \begin{align*}
        \delta_{\ref{LEMMA:K53-L2-link-stability}}\le \delta_{\ref{LEMMA:K53-L2-link-bound}}(\alpha)
        \qquad\text{and}\qquad
        2\delta_{\ref{LEMMA:K53-L2-link-stability}}+\alpha<\varepsilon^2.
    \end{align*}
    Choose $N_{\ref{LEMMA:K53-L2-link-stability}}\ge N_{\ref{LEMMA:K53-L2-link-bound}}(\alpha)$.
    Let $n\ge N_{\ref{LEMMA:K53-L2-link-stability}}$.
    Since $\delta_{\ref{LEMMA:K53-L2-link-stability}}\le \delta_{\ref{LEMMA:K53-L2-link-bound}}(\alpha)$, Lemma~\ref{LEMMA:K53-L2-link-bound} applied with parameter $\alpha$ gives
    \begin{align*}
        |G|\le \left(\frac38+\alpha\right)n^2.
    \end{align*}
    It follows that
    \begin{align*}
        |G[V_1,V_2]|\ge 
        \left(\frac58-\delta_{\ref{LEMMA:K53-L2-link-stability}}\right)n^2 - \left(\frac38+\alpha\right)n^2
        =\left(\frac14-\delta_{\ref{LEMMA:K53-L2-link-stability}}-\alpha\right)n^2.
    \end{align*}
    Let
    \begin{align*}
        M_{12}\coloneqq \{xy:x\in V_1,\ y\in V_2,\ xy\notin G\}.
    \end{align*}
    Since $|V_1||V_2|\le n^2/4$, it follows that
    \begin{align}
        |M_{12}|\le \bigl(\delta_{\ref{LEMMA:K53-L2-link-stability}}+\alpha\bigr)n^2. \label{equ:K53-L2-link-stability-cross-missing}
    \end{align}
    
    Let $D$ be a set of at most $\delta_{\ref{LEMMA:K53-L2-link-stability}}n^2$ edges of $G$ such that $G-D$ is balanced $K_4$-free with respect to the partition $V_1\cup V_2$.
    Suppose, for a contradiction, that $|G[V_1]|>\varepsilon n^2$ and $|G[V_2]|>\varepsilon n^2$.
    Let
    \begin{align*}
        \mathcal T\coloneqq \{\{x,x',y,y'\}:x,x'\in V_1,\ y,y'\in V_2,\ xx'\in G[V_1],\ yy'\in G[V_2]\}.
    \end{align*}
    Then
    \begin{align*}
        |\mathcal T|=|G[V_1]|\,|G[V_2]|>\varepsilon^2n^4.
    \end{align*}
    Every member of $\mathcal T$ either spans a balanced copy of $K_4$ in $G$, or contains a missing cross pair from $M_{12}$. Since $G-D$ is balanced $K_4$-free, each member of $\mathcal T$ which spans a balanced copy of $K_4$ contains an edge of $D$. Each edge of $D$ and each missing cross pair from $M_{12}$ belongs to at most $n^2$ members of $\mathcal T$. Indeed, after fixing such a pair, there are at most $n^2$ choices for the remaining vertices. It follows from~\eqref{equ:K53-L2-link-stability-cross-missing} and the choice of $\delta_{\ref{LEMMA:K53-L2-link-stability}}$ that
    \begin{align*}
        |\mathcal T|\le \bigl(|D|+|M_{12}|\bigr)n^2\le \bigl(2\delta_{\ref{LEMMA:K53-L2-link-stability}}+\alpha\bigr)n^4<\varepsilon^2n^4,
    \end{align*}
    contradicting $|\mathcal T|>\varepsilon^2n^4$. Hence at least one of $G[V_1]$ and $G[V_2]$ has at most $\varepsilon n^2$ edges, completing the proof of Lemma~\ref{LEMMA:K53-L2-link-stability}. 
\end{proof}

\subsection{Local degree control near \texorpdfstring{$\mathbb B_n$}{B n}}\label{SUBSEC:K53-L2-local-degree-control}

We next record the two $\ell_2$-specific inputs and then prove the local degree control. Since the shadow of $K_5^3$ is complete, duplicating a vertex in a $K_5^3$-free $3$-graph does not create a copy of $K_5^3$. Thus the standard deletion--duplication argument, see for example~\cite[Lemma~2.6]{HLZ25Fano}, gives the following near-regularity lemma for the $\ell_2$-degree.

\begin{lemma}\label{LEMMA:K53-L2-degree-regularity}
    Suppose that $\mathcal H$ is a $K_5^3$-free $n$-vertex $3$-graph with $\norm{\mathcal H}_{2}=\mathrm{ex}_{\ell_2}(n,K_5^3)$.
    Then, for every pair of vertices $u,v\in V(\mathcal H)$, we have $\left|s_{\mathcal H}(u)-s_{\mathcal H}(v)\right|\le 60n^2$.
    In particular, for every vertex $v\in V(\mathcal H)$, we have $\left|s_{\mathcal H}(v)-s(\mathcal H)\right|\le 60n^2$.
\end{lemma}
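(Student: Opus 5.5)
We argue by deletion and duplication. Fix $u,v\in V(\mathcal H)$ and suppose $s_{\mathcal H}(u)<s_{\mathcal H}(v)$. Let $\mathcal H'$ be obtained from $\mathcal H$ by deleting $u$ and then adding a new vertex $u'$ that is a clone of $v$. That is, for every pair $xy$ with $x,y\neq u$ and $vxy\in\mathcal H$ we add $u'xy$; we do not add any triple containing both $u'$ and $v$. Then $\mathcal H'$ has $n$ vertices. It is $K_5^3$-free: a copy of $K_5^3$ in $\mathcal H'$ must use $u'$, but it cannot also use $v$, since $u'v$ is not in the shadow and the shadow of $K_5^3$ is complete. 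Replacing $u'$ by $v$ would then give a copy of $K_5^3$ in $\mathcal H$. By maximality, $\norm{\mathcal H'}_2\le \norm{\mathcal H}_2$. It therefore suffices to show
\begin{align*}
    \norm{\mathcal H'}_2 \ge \norm{\mathcal H}_2 - s_{\mathcal H}(u) + s_{\mathcal H}(v) - 60n^2,
\end{align*}
since this gives $s_{\mathcal H}(v)-s_{\mathcal H}(u)\le 60n^2$. The roles of $u$ and $v$ are symmetric, and the ``in particular'' statement follows because $s(\mathcal H)$ is an average of the values $s_{\mathcal H}(w)$.

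To compare norms, write $\mathcal H_0\coloneqq \mathcal H-u$, so that $\norm{\mathcal H_0}_2=\norm{\mathcal H}_2-s_{\mathcal H}(u)$ by definition. Then $\norm{\mathcal H'}_2-\norm{\mathcal H_0}_2=s_{\mathcal H'}(u')$, and by~\eqref{equ:def-2norm-degree-b},
\begin{align*}
    s_{\mathcal H'}(u')=\norm{L_{\mathcal H'}(u')}_2+\sum_{e\in L_{\mathcal H'}(u')}\bigl(2d_{\mathcal H'}(e)-1\bigr).
\end{align*}
We compare this term by term with the same expression for $s_{\mathcal H}(v)$.

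The link $L_{\mathcal H'}(u')$ equals $L_{\mathcal H}(v)$ with the edges through $u$ removed; there are at most $n$ of these. Every codegree changes by at most $2$: removing $u$ costs at most $1$, and adding $u'$ gains at most $1$ (and for $e\in L_{\mathcal H'}(u')$ the net effect is at least $-1+1$). Each link degree changes by at most $1$.
\begin{itemize}
    \item \emph{Link norm term.} $\norm{L}_2$ loses at most $O(n)$ per affected vertex, about $2n$ per vertex over $n$ vertices, so $O(n^2)$ in total, together with the term for the vertex $u$ itself, which is at most $n^2$.
    \item \emph{Codegree sum.} Each of the at most $\binom n2$ pairs contributes an error of $O(1)$, and the at most $n$ removed pairs contribute at most $2n$ each.
\end{itemize}
Tallying gives an error of at most a small constant times $n^2$; careful bookkeeping should keep it below $60n^2$.

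The only delicate step is this constant bookkeeping, in particular checking which codegrees shift by $-1$ (from deleting $u$) versus $+1$ (from adding $u'$). Since we only need a lower bound on $\norm{\mathcal H'}_2$, we can discard every nonnegative gain and bound each loss crudely, for instance by $|d^2-(d-1)^2|\le 2n$. This easily fits within $60n^2$.
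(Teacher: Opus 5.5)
Your proposal is correct and is the standard deletion--duplication argument the paper invokes for this lemma (it gives no proof of its own and cites it as standard). The bookkeeping is also easier than you suggest: codegrees of pairs in $L_{\mathcal H'}(u')$ do not decrease. So the only losses are at most $3n^2$ in the link-norm term and at most $2n^2$ from the at most $n$ pairs through $u$, for a total of at most $5n^2\le 60n^2$.
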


We will also use the following stability result of Balogh, Clemen, and Lidick\'{y}~\cite{BCL22b}.

\begin{theorem}[{\cite[Theorem~1.6]{BCL22b}}]\label{THM:K53-L2-BCL-stability}
    For every $\varepsilon>0$, there exist $\delta_{\ref{THM:K53-L2-BCL-stability}}\coloneqq\delta_{\ref{THM:K53-L2-BCL-stability}}(\varepsilon)>0$ and $N_{\ref{THM:K53-L2-BCL-stability}}\coloneqq N_{\ref{THM:K53-L2-BCL-stability}}(\varepsilon)$ such that the following holds for all $n\ge N_{\ref{THM:K53-L2-BCL-stability}}$.
    If $\mathcal H$ is an $n$-vertex $K_5^3$-free $3$-graph with
    \begin{align*}
        \norm{\mathcal H}_{2}\ge \left(\frac58-\delta_{\ref{THM:K53-L2-BCL-stability}}\right)\frac{n^4}{2}, 
    \end{align*}
    then $\mathcal H$ is $\varepsilon$-close to $\mathbb B_n$.
\end{theorem}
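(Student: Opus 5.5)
The plan is to combine the flag-algebra certificate behind Theorem~\ref{THM:K53-L2-density} with an induced hypergraph removal lemma and a structural characterization of the bipartite construction. The key observation is that the only induced $4$-vertex configurations occurring in $\mathbb{B}[V_1,V_2]$ are three. A $4$-set split $2+2$ spans $K_4^3$. A $4$-set split $3+1$ spans $K_4^{3-}$ (three edges). A $4$-set inside one part spans no edge. Call a $3$-graph \emph{bipartite-like} if all its $4$-sets induce one of these three configurations.

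\textbf{Step 1 (certificate slack).} The first step is to extract the slack from the certificate. The flag-algebra proof of $\pi_{\ell_2}(K_5^3)\le 5/8$ writes $\norm{\mathcal H}_2/(n^4/2)$, up to $o(1)$, as $5/8$ minus a sum of sums-of-squares terms, minus $\sum_F c_F\, p(F,\mathcal H)$ over small $K_5^3$-free types $F$, with $c_F\ge 0$. I would check, or adjust the SDP so, that $c_F>0$ for every $F$ that is not bipartite-like. This is the complementary-slackness requirement. Then the hypothesis $\norm{\mathcal H}_2\ge (5/8-\delta)n^4/2$ forces $p(F,\mathcal H)\le C\delta$ for every such $F$.

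\textbf{Step 2 (removal).} The second step is removal. By the induced hypergraph removal lemma, changing $o(n^3)$ triples turns $\mathcal H$ into a $3$-graph $\mathcal H'$ that is bipartite-like. Since $\norm{\cdot}_2$ changes by $O(n)$ per triple changed, this edit changes $\norm{\mathcal H}_2$ by only $o(n^4)$, so $\norm{\mathcal H'}_2\ge(5/8-2\delta)n^4/2$.

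\textbf{Step 3 (structure).} The third step shows that a bipartite-like $3$-graph equals $\mathbb B[V_1,V_2]$ for some partition. I would argue as follows. If $\mathcal H'$ is empty, take $V_1 = V(\mathcal H')$. Otherwise fix an edge $xyz$. Every other vertex $w$ makes $\{x,y,z,w\}$ induce either $K_4^3$ or $K_4^{3-}$. The missing triple in the $K_4^{3-}$ case, or its absence in the $K_4^3$ case, determines a side for $w$ relative to $x,y,z$. Consistency of this assignment on $5$-sets should then show that the edges are exactly the non-monochromatic triples. This is a finite check of $5$-vertex configurations.

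\textbf{Step 4 (optimize partition).} Finally, $\norm{\mathbb B[V_1,V_2]}_2$ is an explicit function of $|V_1|=\alpha n$. It is strictly concave near $\alpha=1/2$ and maximized uniquely there. Near-optimality therefore forces $|\alpha-1/2|=O(\sqrt\delta)$. Rebalancing costs $O(\sqrt\delta\, n^3)$ further triples, so $\mathcal H$ is $\varepsilon$-close to $\mathbb B_n$ once $\delta$ is small and $n$ is large.

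The main obstacle is Step 1. It requires knowing that the certificate has strictly positive slack on every non-bipartite-like configuration, which is a computer-verified property of the specific SDP solution rather than something derivable from the earlier sections. If slack fails for some configuration $F$, I would first add the density of $F$ to the objective and re-solve the SDP to restore strict slack.
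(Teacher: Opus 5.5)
The paper does not prove this theorem; it imports it from \cite{BCL22b}. Your pipeline (certificate slack, induced removal, exact structure, rebalancing) is the one the paper uses for the analogous Theorem~\ref{THM:K53-count-edge-stability}. However, your Step~3 is false, and it fails exactly where that proof has to do extra work.

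First, the $\mathcal H'$ produced in Step~2 need not be $K_5^3$-free unless you put $K_5^3$ into the removal family. This costs nothing, since $\mathcal H$ contains no $K_5^3$. Without it, even $K_n^3$ is bipartite-like in your sense.

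More seriously, $K_5^3$-free bipartite-like $3$-graphs need not be of the form $\mathbb B[V_1,V_2]$. Let $V=W_1\cup W_2$ with $W_1\cap W_2=\{z\}$ and $|W_1|,|W_2|\approx n/2$, and set $\mathcal H\coloneqq\binom{V}{3}\setminus\bigl(\binom{W_1}{3}\cup\binom{W_2}{3}\bigr)$.
\begin{itemize}
\item Two non-edges in a common $4$-set share a pair, so they lie in the same $W_i$; hence the whole $4$-set is empty. So every $4$-set spans $0$, $3$ or $4$ edges.
\item Every $5$-set has three vertices in $W_1$ or in $W_2$, so $\mathcal H$ is $K_5^3$-free.
\item $\mathcal H$ differs from $\mathbb B[W_1,W_2\setminus\{z\}]$ in $O(n^2)$ triples, so it satisfies the hypothesis of the theorem for large $n$.
\end{itemize}
Steps~1 and~2 leave this $\mathcal H$ untouched. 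Yet for $a,a'\in W_1\setminus\{z\}$ and $b,b'\in W_2\setminus\{z\}$, the set $\{z,a,a',b,b'\}$ induces $K_5^{3=}$, which never occurs as an induced subgraph of any $\mathbb B[V_1,V_2]$. In fact the $K_5^3$-free $5$-vertex $3$-graphs with all $4$-sets allowed are exactly $E_5,J_4,K_5^{3-},K_5^{3=}$. So your ``finite check of $5$-vertex configurations'' cannot close.

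In general, bipartite-likeness says exactly that the non-edges form $\bigcup_i\binom{W_i}{3}$ for sets $W_i$ pairwise meeting in at most one vertex. $K_5^3$-freeness then says every $5$-set has three vertices in a common $W_i$. There are two ways to repair Step~3.

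\emph{Route (a).} Weaken Step~3 to ``$\mathcal H'$ is within $O(n^2)$ triples of some $\mathbb B[V_1,V_2]$.'' This needs an argument, which you have not given, that for large $n$ the covering condition leaves essentially two sets meeting in at most one vertex. That means excluding three large $W_i$, vertices outside two large $W_i$, and large families of small $W_i$.

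\emph{Route (b).} As in Lemmas~\ref{LEMMA:K53-count-five-types} and~\ref{LEMMA:K53-count-exact-five-type-structure}, forbid every $5$-vertex configuration other than $E_5,J_4,K_5^{3-}$; the exact structure $\mathbb B[A,B]$ then follows. This route needs $K_5^{3=}$ to have vanishing density. Your slack condition in Step~1 does not provide that, because $K_5^{3=}$ contains no non-bipartite-like $4$-set. The paper gets the analogous bound from $n\,\mathrm N(K_4^3,\mathcal H)\ge 3\mathrm N_{\mathrm{ind}}(K_5^{3-},\mathcal H)+\mathrm N_{\mathrm{ind}}(K_5^{3=},\mathcal H)$, together with precise flag-algebra counts of $K_4^3$ and $K_5^{3-}$. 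You would need corresponding certificate information for the $\ell_2$ objective.

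A minor, harmless point: the removal lemma's edit bound is $\gamma(\delta)n^3$ with $\gamma\to0$, not $\delta n^3$, so the $\ell_2$ loss in Step~2 is $O(\gamma)n^4$ rather than $\delta n^4/2$.
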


The next lemma is the $\ell_2$-specific step which will be combined with the common local control in Lemma~\ref{LEMMA:common-local-bad-missing-control}.

\begin{lemma}\label{LEMMA:K53-L2-local-degree-control}
    Let $\eta>0$ be sufficiently small. There exist $\delta_{\ref{LEMMA:K53-L2-local-degree-control}}\coloneqq\delta_{\ref{LEMMA:K53-L2-local-degree-control}}(\eta)>0$ and $N_{\ref{LEMMA:K53-L2-local-degree-control}}\coloneqq N_{\ref{LEMMA:K53-L2-local-degree-control}}(\eta)$ such that the following holds for all $n\ge N_{\ref{LEMMA:K53-L2-local-degree-control}}$.
    Suppose that $\mathcal H$ is an $n$-vertex $K_5^3$-free $3$-graph with $\norm{\mathcal H}_{2}=\mathrm{ex}_{\ell_2}(n,K_5^3)$, and that $\mathcal H$ is $\delta$-close to $\mathbb B_n$ for some $0<\delta\le \delta_{\ref{LEMMA:K53-L2-local-degree-control}}$.
    Let $V_1\cup V_2=V(\mathcal H)$ be a maximum partition of $\mathcal H$, and let $\mathcal B$ and $\mathcal M$ be the bad and missing edges with respect to this partition.
    Then $\max\{\Delta(\mathcal B),\Delta(\mathcal M)\}\le 2\eta n^2$.
\end{lemma}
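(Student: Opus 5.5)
The plan is to combine the near-regularity of $s_{\mathcal H}$ from Lemma~\ref{LEMMA:K53-L2-degree-regularity} with the link estimates of Lemmas~\ref{LEMMA:K53-L2-link-bound} and~\ref{LEMMA:K53-L2-link-stability}. The idea is that every vertex has $s_{\mathcal H}(v)$ close to the value it has in $\mathbb B_n$. A vertex with many bad or missing edges would have a link that is forced to be far from the extremal link structure, and this would lower $s_{\mathcal H}(v)$ by order $\eta n^3$.

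\textbf{Step 1: global normalisation.} Since $\norm{\mathcal H}_2\ge\norm{\mathbb B_n}_2=(\frac58+o(1))\frac{n^4}{2}$, the identity~\eqref{equ:def-2norm-degree-average} gives $s(\mathcal H)\ge(\frac54-o(1))n^3$. Lemma~\ref{LEMMA:K53-L2-degree-regularity} then yields $s_{\mathcal H}(v)\ge(\frac54-o(1))n^3$ for every $v$. By Lemma~\ref{LEMMA:common-near-bipartite-setup}, all but $O(\delta^{1/2}n^2)$ pairs $e$ have $|d_{\mathcal H}(e)-d_{\mathbb B}(e)|\le\delta^{1/4}n$. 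Here $d_{\mathbb B}(e)$ is $n/2$ for same-side pairs and $n-2$ for cross pairs, up to $o(n)$.

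Using~\eqref{equ:def-2norm-degree-b}, write $s_{\mathcal H}(v)=\norm{L_v}_2+\sum_{e\in L_v}(2d_{\mathcal H}(e)-1)$. Up to an error $O(\delta^{1/4})n^3$, the second sum equals $2\bigl(n|L_v[V_1,V_2]|+\frac n2|L_v[V_1]|+\frac n2|L_v[V_2]|\bigr)=n\bigl(|L_v|+|L_v[V_1,V_2]|\bigr)$. By Lemma~\ref{LEMMA:common-balanced-k4-missing}, $L_v$ is $\varepsilon n^2$-close to balanced $K_4$-free. Lemma~\ref{LEMMA:K53-L2-link-bound} then gives $\norm{L_v}_2\le(\frac58+\varepsilon)n^3$. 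Combining this with the lower bound on $s_{\mathcal H}(v)$, we get $|L_v|+|L_v[V_1,V_2]|\ge(\frac58-\varepsilon')n^2$.

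\textbf{Step 2: link structure.} Lemma~\ref{LEMMA:K53-L2-link-stability} now says that one of $L_v[V_1]$, $L_v[V_2]$ has at most $\varepsilon n^2$ edges. Let $v\in V_i$. By Fact~\ref{FACT:common-maximum-partition-link}, $|L_v[V_{3-i}]|\ge|L_v[V_i]|$, so $|L_v[V_i]|\le\varepsilon n^2$. Since bad edges at $v$ are exactly the edges of $L_v[V_i]$, this gives $d_{\mathcal B}(v)\le\varepsilon n^2\le\eta n^2$.

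\textbf{Step 3: missing edges.} Here $d_{\mathcal M}(v)=\binom{|V_{3-i}|}{2}-|L_v[V_{3-i}]|+|V_1||V_2|-|L_v[V_1,V_2]|$ (up to $O(n)$). The constraint $|L_v|+|L_v[V_1,V_2]|\ge(\frac58-\varepsilon')n^2$, with $|L_v[V_i]|\le\varepsilon n^2$, $|L_v[V_{3-i}]|\le\frac{n^2}{8}+o(n^2)$ and $|L_v[V_1,V_2]|\le\frac{n^2}{4}+o(n^2)$, forces $|L_v[V_{3-i}]|+2|L_v[V_1,V_2]|\ge(\frac58-2\varepsilon')n^2$. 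Each of the two terms must then be within $O(\varepsilon')n^2$ of its maximum. Hence $d_{\mathcal M}(v)=O(\varepsilon')n^2\le 2\eta n^2$, once $\varepsilon,\delta$ are chosen small relative to $\eta$.

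\textbf{Main obstacle.} The delicate step is the error control in Step 1: replacing $d_{\mathcal H}(e)$ by $d_{\mathbb B}(e)$ inside the sum over $L_v$, uniformly for every $v$ and not just on average. I would handle it by bounding the sum of $|d_{\mathcal H}(e)-d_{\mathbb B}(e)|$ over all pairs by $O(\delta n^3)$, via $|\mathcal B|+|\mathcal M|$. Any single link contains at most all pairs, so the error is $\sum_e|d_{\mathcal H}(e)-d_{\mathbb B}(e)|\le 3(|\mathcal B|+|\mathcal M|)=O(\delta n^3)$, and this suffices with no per-vertex averaging needed. The constants then need only be chained in the order $\eta\to\varepsilon\to\delta$.
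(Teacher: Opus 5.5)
Your proposal is correct and is essentially the paper's proof: regularity of $s_{\mathcal H}$ gives $s_{\mathcal H}(v)\ge(\frac54-o(1))n^3$, and summing codegree errors over all pairs gives $s_{\mathcal H}(v)\le\norm{L_v}_2+n\bigl(|L_v|+|L_v[V_1,V_2]|\bigr)+O(\delta^{1/2})n^3$ (the paper uses only the one-sided bound $d_{\mathcal H}(e)\le d_{\mathbb B[V_1,V_2]}(e)+d_{\mathcal B}(e)$, costing $3|\mathcal B|$); Lemmas~\ref{LEMMA:common-balanced-k4-missing}, \ref{LEMMA:K53-L2-link-bound}, \ref{LEMMA:K53-L2-link-stability}, Fact~\ref{FACT:common-maximum-partition-link} and the same arithmetic for $d_{\mathcal M}(v)$ then finish it. The only thing to tidy is that your $\varepsilon$ plays two incompatible roles, and the fix changes the shape of the Step~3 bound: apply Lemma~\ref{LEMMA:K53-L2-link-stability} with output parameter $\eta$, and Lemma~\ref{LEMMA:K53-L2-link-bound} with a parameter below $\delta_{\ref{LEMMA:K53-L2-link-stability}}(\eta)$; Step~3 then yields $d_{\mathcal M}(v)\le(\eta+\varepsilon'+O(\delta^{1/2}))n^2\le 2\eta n^2$ rather than $O(\varepsilon')n^2$, which is exactly why the lemma states $2\eta n^2$.
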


\begin{proof}[Proof of Lemma~\ref{LEMMA:K53-L2-local-degree-control}]
    Choose $\alpha>0$ sufficiently small so that $3\alpha\le \delta_{\ref{LEMMA:K53-L2-link-stability}}(\eta)/2$ and $4\alpha\le \eta$.
    Set $\rho\coloneqq \min\{\delta_{\ref{LEMMA:K53-L2-link-bound}}(\alpha),\delta_{\ref{LEMMA:K53-L2-link-stability}}(\eta)\}$. This will be the deletion tolerance supplied by Lemma~\ref{LEMMA:common-balanced-k4-missing}, so that the link graph satisfies the deletion hypotheses of both Lemma~\ref{LEMMA:K53-L2-link-bound} and Lemma~\ref{LEMMA:K53-L2-link-stability}.
    Choose $0<\delta_{\ref{LEMMA:K53-L2-local-degree-control}}<1$ sufficiently small so that Lemma~\ref{LEMMA:common-near-bipartite-setup} is applicable with parameter $\delta_{\ref{LEMMA:K53-L2-local-degree-control}}$ and
    \begin{align}
        \delta_{\ref{LEMMA:K53-L2-local-degree-control}}\le \delta_{\ref{LEMMA:common-balanced-k4-missing}}(\rho),\quad 10\delta_{\ref{LEMMA:K53-L2-local-degree-control}}^{1/2}\le \rho,\quad 42\delta_{\ref{LEMMA:K53-L2-local-degree-control}}^{1/2}\le \alpha,\quad 15\delta_{\ref{LEMMA:K53-L2-local-degree-control}}^{1/2}+150\delta_{\ref{LEMMA:K53-L2-local-degree-control}}\le \eta-3\alpha. \label{equ:K53-L2-local-degree-delta-choice}
    \end{align}
    Finally, choose $N_{\ref{LEMMA:K53-L2-local-degree-control}}$ sufficiently large so that all lemmas used below are applicable with the parameters fixed above.
    
    Since $\mathcal H$ is $\delta$-close to $\mathbb B_n$ and $\delta\le\delta_{\ref{LEMMA:K53-L2-local-degree-control}}$, it is also $\delta_{\ref{LEMMA:K53-L2-local-degree-control}}$-close to $\mathbb B_n$. Lemma~\ref{LEMMA:common-near-bipartite-setup} applied with parameter $\delta_{\ref{LEMMA:K53-L2-local-degree-control}}$ gives
    \begin{align}
        \max\{|\mathcal B|,|\mathcal M|\}\le 2\delta_{\ref{LEMMA:K53-L2-local-degree-control}} n^3,\qquad \left||V_i|-n/2\right|\le 10\delta_{\ref{LEMMA:K53-L2-local-degree-control}}^{1/2}n\text{ for each }i\in[2],\qquad |\mathcal H|\le |\mathbb B_n|+\delta_{\ref{LEMMA:K53-L2-local-degree-control}} n^3. \label{equ:K53-L2-local-degree-setup}
    \end{align}
    
    Fix a vertex $v\in V(\mathcal H)$. By symmetry, assume that $v\in V_1$, and write $L\coloneqq L_{\mathcal H}(v)$. Also set $L^{\mathrm{cr}}\coloneqq L[V_1,V_2]$.
    By Lemma~\ref{LEMMA:K53-L2-degree-regularity}, \eqref{equ:def-2norm-degree-average}, the extremality of $\mathcal H$, the upper bound on $|\mathcal H|$ in~\eqref{equ:K53-L2-local-degree-setup}, and the choice of $N_{\ref{LEMMA:K53-L2-local-degree-control}}$, we have
    \begin{align}
        s_{\mathcal H}(v)\ge s(\mathcal H)-60n^2=\frac{4\norm{\mathcal H}_{2}-3|\mathcal H|}{n}-60n^2\ge \frac{4\norm{\mathbb B_n}_{2}-3\bigl(|\mathbb B_n|+\delta_{\ref{LEMMA:K53-L2-local-degree-control}} n^3\bigr)}{n}-60n^2\ge \frac54n^3-\alpha n^3. \label{equ:K53-L2-local-degree-s-lower}
    \end{align}
    The last inequality follows from the exact formulae for $|\mathbb B_n|$ and $\norm{\mathbb B_n}_{2}$, by taking $N_{\ref{LEMMA:K53-L2-local-degree-control}}$ sufficiently large.
    For every pair $e$, we have $d_{\mathcal H}(e)\le d_{\mathbb B[V_1,V_2]}(e)+d_{\mathcal B}(e)$. Using~\eqref{equ:K53-L2-local-degree-setup} and also $|L|+|L^{\mathrm{cr}}|\le n^2$, we obtain
    \begin{align}
        \sum_{e\in L}d_{\mathcal H}(e)
        \le \left(\frac n2+10\delta_{\ref{LEMMA:K53-L2-local-degree-control}}^{1/2}n\right)|L\setminus L^{\mathrm{cr}}|+(n-2)|L^{\mathrm{cr}}|+3|\mathcal B| 
        \le \frac n2\bigl(|L|+|L^{\mathrm{cr}}|\bigr)+16\delta_{\ref{LEMMA:K53-L2-local-degree-control}}^{1/2}n^3. \label{equ:K53-L2-local-degree-codegree-upper}
    \end{align}
    By~\eqref{equ:def-2norm-degree-b} and~\eqref{equ:K53-L2-local-degree-codegree-upper},
    \begin{align}
        s_{\mathcal H}(v)\le \norm{L}_{2}+n\bigl(|L|+|L^{\mathrm{cr}}|\bigr)+32\delta_{\ref{LEMMA:K53-L2-local-degree-control}}^{1/2}n^3. \label{equ:K53-L2-local-degree-s-upper}
    \end{align}
    Combining~\eqref{equ:K53-L2-local-degree-s-lower} and~\eqref{equ:K53-L2-local-degree-s-upper}, we get
    \begin{align}
        \norm{L}_{2}+n\bigl(|L|+|L^{\mathrm{cr}}|\bigr)\ge \frac54n^3-\alpha n^3-32\delta_{\ref{LEMMA:K53-L2-local-degree-control}}^{1/2}n^3\ge \frac54n^3-2\alpha n^3, \label{equ:K53-L2-local-degree-link-energy-lower}
    \end{align}
    where the last inequality follows from~\eqref{equ:K53-L2-local-degree-delta-choice}.
    
    Since $\delta\le\delta_{\ref{LEMMA:K53-L2-local-degree-control}}\le\delta_{\ref{LEMMA:common-balanced-k4-missing}}(\rho)$, the graph $\mathcal H$ is also $\delta_{\ref{LEMMA:common-balanced-k4-missing}}(\rho)$-close to $\mathbb B_n$. Thus Lemma~\ref{LEMMA:common-balanced-k4-missing} applies with parameter $\rho$. By its moreover part, after deleting at most $\rho n^2$ edges from $L$, we obtain a graph which is balanced $K_4$-free with respect to the partition $V_1\cup V_2$.
    Since $\rho\le\delta_{\ref{LEMMA:K53-L2-link-bound}}(\alpha)$, the deletion set has size at most $\delta_{\ref{LEMMA:K53-L2-link-bound}}(\alpha)n^2$. By~\eqref{equ:K53-L2-local-degree-delta-choice} and~\eqref{equ:K53-L2-local-degree-setup}, the partition satisfies $\bigl||V_i|-n/2\bigr|\le \rho n$ for each $i\in[2]$. Lemma~\ref{LEMMA:K53-L2-link-bound} gives $\norm{L}_{2}\le \left(\frac58+\alpha\right)n^3$.
    Together with~\eqref{equ:K53-L2-local-degree-link-energy-lower}, this gives
    \begin{align}
        |L|+|L^{\mathrm{cr}}|\ge \left(\frac58-3\alpha\right)n^2\ge \left(\frac58-\delta_{\ref{LEMMA:K53-L2-link-stability}}(\eta)\right)n^2. \label{equ:K53-L2-local-degree-link-size-lower}
    \end{align}
    Since $\rho\le\delta_{\ref{LEMMA:K53-L2-link-stability}}(\eta)$, the partition satisfies $\bigl||V_i|-n/2\bigr|\le \delta_{\ref{LEMMA:K53-L2-link-stability}}(\eta)n$ for each $i\in[2]$, and the same deletion set has size at most $\delta_{\ref{LEMMA:K53-L2-link-stability}}(\eta)n^2$. Thus Lemma~\ref{LEMMA:K53-L2-link-stability} applied to $L$ gives $|L[V_1]|\le \eta n^2$ or $|L[V_2]|\le \eta n^2$. Since $V_1\cup V_2$ is a maximum partition, Fact~\ref{FACT:common-maximum-partition-link} gives $|L[V_2]|\ge |L[V_1]|$. It follows that $d_{\mathcal B}(v)=|L[V_1]|\le \eta n^2$. As $v$ was arbitrary, $\Delta(\mathcal B)\le \eta n^2$.
    
    It remains to bound the missing degree. By~\eqref{equ:K53-L2-local-degree-link-size-lower} and $|L^{\mathrm{cr}}|\le |V_1||V_2|\le n^2/4$, we have $|L|\ge \left(\frac38-3\alpha\right)n^2$.
    For $v\in V_1$, the number of triples of $\mathbb B[V_1,V_2]$ containing $v$ is
    \begin{align}
        d_{\mathbb B[V_1,V_2]}(v)=\binom{|V_2|}{2}+(|V_1|-1)|V_2|\le \frac32\left(\frac n2+10\delta_{\ref{LEMMA:K53-L2-local-degree-control}}^{1/2}n\right)^2. \label{equ:K53-L2-local-degree-template-degree}
    \end{align}
    Therefore, using $d_{\mathcal B}(v)\le \eta n^2$, the lower bound on $|L|$, and~\eqref{equ:K53-L2-local-degree-template-degree}, we obtain
    \begin{align*}
        d_{\mathcal M}(v)
        &=d_{\mathbb B[V_1,V_2]}(v)-\bigl(|L^{\mathrm{cr}}|+|L[V_2]|\bigr)\\
        &\le d_{\mathbb B[V_1,V_2]}(v)-\bigl(|L|-|L[V_1]|\bigr)\\
        &\le \frac32\left(\frac n2+10\delta_{\ref{LEMMA:K53-L2-local-degree-control}}^{1/2}n\right)^2-\left(\frac38-3\alpha\right)n^2+\eta n^2\\
        &\le \bigl(15\delta_{\ref{LEMMA:K53-L2-local-degree-control}}^{1/2}+150\delta_{\ref{LEMMA:K53-L2-local-degree-control}}+3\alpha+\eta\bigr)n^2
        \le 2\eta n^2,
    \end{align*}
    where the last inequality follows from the choice of $\delta_{\ref{LEMMA:K53-L2-local-degree-control}}$.
    The same argument applies when $v\in V_2$, and hence $\Delta(\mathcal M)\le 2\eta n^2$.
    Together with the bound on $\Delta(\mathcal B)$ above, this proves the lemma.
\end{proof}

\subsection{The local replacement step}\label{SUBSEC:K53-L2-local-modification}

We first record the bipartite calculation needed at the end of the exact argument. It says that among all $n$-vertex bipartite $3$-graphs, the complete balanced bipartite construction has maximum $\ell_2$-norm.

\begin{lemma}\label{LEMMA:K53-L2-bipartite-max}
    Let $n\ge4$, and let $\mathcal G$ be an $n$-vertex bipartite $3$-graph. Then
    \begin{align*}
        \norm{\mathcal G}_{2}\le \norm{\mathbb B_n}_{2},
    \end{align*}
    and equality holds if and only if $\mathcal G\cong\mathbb B_n$.
\end{lemma}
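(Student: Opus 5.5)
The proof will reduce to monotonicity plus a one-variable optimization. Here a bipartite $3$-graph means that there is a bipartition $V(\mathcal G)=X\cup Y$ with $\mathcal G\subseteq \mathbb B[X,Y]$. The first step is to compare $\mathcal G$ with the complete bipartite $3$-graph $\mathbb B[X,Y]$. Every codegree is monotone under adding edges, so $d_{\mathcal G}(e)\le d_{\mathbb B[X,Y]}(e)$ for every pair $e$. Hence $\norm{\mathcal G}_2\le \norm{\mathbb B[X,Y]}_2$. Equality forces $d_{\mathcal G}(e)=d_{\mathbb B[X,Y]}(e)$ for all pairs, and therefore $\mathcal G=\mathbb B[X,Y]$. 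Indeed, if some triple $T\in\mathbb B[X,Y]\setminus\mathcal G$ existed, each of its three pairs would have strictly smaller codegree, so the inequality would be strict.

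It then remains to maximize $f(a)\coloneqq \norm{\mathbb B[X,Y]}_2$ over $a\coloneqq |X|\in\{0,\dots,n\}$, with $b\coloneqq n-a$. Computing codegrees in $\mathbb B[X,Y]$:
\begin{itemize}
\item a cross pair has codegree $n-2$, and there are $ab$ such pairs;
\item a pair inside $X$ has codegree $b$, and there are $\binom a2$ such pairs;
\item a pair inside $Y$ has codegree $a$, and there are $\binom b2$ such pairs.
\end{itemize}
Hence
\begin{align*}
    f(a)=ab(n-2)^2+\binom a2 b^2+\binom b2 a^2 = ab(n-2)^2+\frac{ab}{2}\bigl(ab\cdot 2 - (a+b)\bigr)\cdot\frac{1}{1}\cdot\frac12\cdot 2,
\end{align*}
which simplifies to $f(a)=ab(n-2)^2+a^2b^2-\tfrac{n}{2}ab$. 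The key observation is that $f$ depends on $a$ only through $P\coloneqq ab$:
\begin{align*}
    f = g(P)\coloneqq P^2+P\Bigl((n-2)^2-\frac n2\Bigr).
\end{align*}
For $n\ge4$ we have $(n-2)^2-n/2\ge 2>0$, so $g$ is strictly increasing for $P\ge0$. Moreover $P=a(n-a)$ is uniquely maximized over integers exactly when $\{a,b\}=\{\lfloor n/2\rfloor,\lceil n/2\rceil\}$. Thus $f(a)\le \norm{\mathbb B_n}_2$, with equality iff the partition is balanced.

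Combining the two steps proves the inequality. For the equality case, $\norm{\mathcal G}_2=\norm{\mathbb B_n}_2$ forces both inequalities to be tight, so $\mathcal G=\mathbb B[X,Y]$ with $\{X,Y\}$ balanced, that is, $\mathcal G\cong\mathbb B_n$. The degenerate cases $a\in\{0,n\}$ give $P=0$ and are covered by the same monotonicity. There is no real obstacle here; the only care needed is the codegree bookkeeping in computing $f$, and checking positivity of the linear coefficient, which is where $n\ge4$ is used.
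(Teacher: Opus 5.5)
Your proof is correct and follows the paper's argument: monotonicity of codegrees reduces to $\mathbb B[X,Y]$, with strict inequality whenever a triple is missing. The identity $\norm{\mathbb B[X,Y]}_2 = m\bigl(m-\frac n2+(n-2)^2\bigr)$ with $m=|X||Y|$, which is strictly increasing in $m$ for $n\ge4$, then gives both the bound and the equality case. The only blemish is presentational: the intermediate display has stray factors $\frac11\cdot\frac12\cdot 2$ and should simply read $\binom a2 b^2+\binom b2 a^2=\frac{ab}{2}(2ab-n)$.
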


\begin{proof}[Proof of Lemma~\ref{LEMMA:K53-L2-bipartite-max}]
    Suppose that $\mathcal G$ is an $n$-vertex bipartite $3$-graph with bipartition $V_1\cup V_2=V(\mathcal G)$. Since adding edges cannot decrease the $\ell_2$-norm, we have
    \begin{align*}
        \norm{\mathcal G}_{2}\le \norm{\mathbb B[V_1,V_2]}_{2},
    \end{align*}
    and equality in this inequality is possible only when $\mathcal G$ is the complete bipartite $3$-graph with this bipartition, namely $\mathcal G=\mathbb B[V_1,V_2]$.

    Put $m\coloneqq |V_1||V_2|$. In $\mathbb B[V_1,V_2]$, pairs inside $V_1$ have codegree $|V_2|$, pairs inside $V_2$ have codegree $|V_1|$, and cross pairs have codegree $n-2$. Hence
    \begin{align*}
        \norm{\mathbb B[V_1,V_2]}_{2}=\binom{|V_1|}{2}|V_2|^2+\binom{|V_2|}{2}|V_1|^2+|V_1||V_2|(n-2)^2=m\left(m-\frac n2+(n-2)^2\right).
    \end{align*}
    For $n\ge4$, the expression on the right is strictly increasing in $m$. Since $m\le \lfloor n^2/4\rfloor$, with equality if and only if the bipartition is balanced, we have $\norm{\mathbb B[V_1,V_2]}_{2}\le\norm{\mathbb B_n}_{2}$. Equality holds if and only if $\mathcal G=\mathbb B[V_1,V_2]$ and the bipartition is balanced. This proves Lemma~\ref{LEMMA:K53-L2-bipartite-max}.
\end{proof}

We also need the local replacement estimate. It will be applied only to $3$-graphs obtained during the replacement procedure, so no forbidden-subgraph condition is needed in its statement.

\begin{lemma}\label{LEMMA:K53-L2-switching}
    There exists $\xi_{\ref{LEMMA:K53-L2-switching}}>0$ such that for every $0<\xi\le \xi_{\ref{LEMMA:K53-L2-switching}}$ there exists $N_{\ref{LEMMA:K53-L2-switching}}\coloneqq N_{\ref{LEMMA:K53-L2-switching}}(\xi)$ such that the following holds for all $n\ge N_{\ref{LEMMA:K53-L2-switching}}$.
    Let $\mathcal G$ be an $n$-vertex $3$-graph with a bipartition $V_1\cup V_2=V(\mathcal G)$, and let $\mathcal B$ and $\mathcal M$ be the bad and missing edges with respect to this partition.
    Let $e_{\ast}\in\partial\mathcal G$ be a pair in the shadow such that $e_{\ast}\in\overline K[V_1,V_2]$. Suppose that $e_\ast$ satisfies
    \begin{enumerate}[label=(\roman*), ref=(\roman*)]
        \item\label{LEMMA:K53-L2-switching-balance} $\bigl||V_i|-n/2\bigr|\le \xi n$ for $i\in[2]$,
        \item\label{LEMMA:K53-L2-switching-degree} $\max\{\Delta(\mathcal B),\Delta(\mathcal M)\}\le \xi n^2$,
        \item\label{LEMMA:K53-L2-switching-heavy} $d_{\mathcal M}(e_\ast)\ge 47\xi^{1/2}n$,
        \item\label{LEMMA:K53-L2-switching-comparison} $d_{\mathcal M}(e_\ast)\ge d_{\mathcal B}(e_\ast)-\xi n$.
    \end{enumerate}
    Let $\mathcal G^\ast\coloneqq(\mathcal G\setminus\mathcal B(e_\ast))\cup\mathcal M(e_\ast)$. Then
    \begin{align*}
        \norm{\mathcal G^\ast}_{2}>\norm{\mathcal G}_{2}.
    \end{align*}
\end{lemma}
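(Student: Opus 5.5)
The plan is to compute $\norm{\mathcal G^\ast}_2-\norm{\mathcal G}_2$ exactly pair by pair, and then bound each group of terms using hypotheses \ref{LEMMA:K53-L2-switching-balance}--\ref{LEMMA:K53-L2-switching-comparison}. By symmetry I take $e_\ast=\{u_1,u_2\}\subseteq V_1$. Write $b\coloneqq d_{\mathcal B}(e_\ast)$ and $m\coloneqq d_{\mathcal M}(e_\ast)$. The removed triples are $u_1u_2w$ with $w\in N_{\mathcal B}(e_\ast)\subseteq V_1$, and the added triples are $u_1u_2w$ with $w\in N_{\mathcal M}(e_\ast)\subseteq V_2$. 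Only the following pairs change codegree:
\begin{itemize}
    \item $e_\ast$ itself, whose codegree goes from $d\coloneqq d_{\mathcal G}(e_\ast)=|V_2|-m+b$ to $d-b+m=|V_2|$;
    \item the same-side pairs $u_iw$ with $w\in N_{\mathcal B}(e_\ast)$, each losing exactly $1$;
    \item the cross pairs $u_iw$ with $w\in N_{\mathcal M}(e_\ast)$, each gaining exactly $1$.
\end{itemize}
Each pair $u_iw$ is affected by only one triple, namely $u_1u_2w$. Hence
\begin{align*}
    \norm{\mathcal G^\ast}_2-\norm{\mathcal G}_2
    =(m-b)\bigl(2|V_2|-m+b\bigr)
    +\sum_{\substack{i\in[2]\\ w\in N_{\mathcal M}(e_\ast)}}\bigl(2d_{\mathcal G}(u_iw)+1\bigr)
    -\sum_{\substack{i\in[2]\\ w\in N_{\mathcal B}(e_\ast)}}\bigl(2d_{\mathcal G}(u_iw)-1\bigr).
\end{align*}

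Next I bound the three terms.

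\emph{Gain term.} For $w\in V_2$, the cross pair $u_iw$ has template codegree $n-2$, so $d_{\mathcal G}(u_iw)\ge n-2-d_{\mathcal M}(u_iw)$. Distinct $w$ give distinct missing triples through $u_i$, so $\sum_{w}d_{\mathcal M}(u_iw)\le d_{\mathcal M}(u_i)\le\xi n^2$ by \ref{LEMMA:K53-L2-switching-degree}. Hence the gain is at least $4m(n-2)-4\xi n^2$.

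\emph{Loss term.} For $w\in V_1$, the same-side pair $u_iw$ has codegree at most $|V_2|+d_{\mathcal B}(u_iw)\le(\tfrac12+\xi)n+d_{\mathcal B}(u_iw)$ by \ref{LEMMA:K53-L2-switching-balance}. Summing with $\sum_w d_{\mathcal B}(u_iw)\le\Delta(\mathcal B)\le\xi n^2$, the loss is at most $2b(1+2\xi)n+4\xi n^2$. Then \ref{LEMMA:K53-L2-switching-comparison} gives $b\le m+\xi n$, which turns this into at most $2m(1+2\xi)n+O(\xi n^2)$.

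\emph{The $e_\ast$ term.} The factor $2|V_2|-m+b$ lies between $0$ and $3n$. If $m\ge b$ the term is nonnegative. Otherwise $m-b\ge-\xi n$ by \ref{LEMMA:K53-L2-switching-comparison}, so the term is at least $-3\xi n^2$.

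Combining the three bounds, I expect
\begin{align*}
    \norm{\mathcal G^\ast}_2-\norm{\mathcal G}_2\ge 2mn-4\xi mn-8m-C\xi n^2\ge mn-C\xi n^2
\end{align*}
for an absolute constant $C$ (about $15$) and $n$ large. Hypothesis \ref{LEMMA:K53-L2-switching-heavy} gives $mn\ge47\xi^{1/2}n^2$, and $47\xi^{1/2}>C\xi$ once $\xi\le\xi_{\ref{LEMMA:K53-L2-switching}}$ is small. This yields the strict inequality.

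The main obstacle is the bookkeeping in the exact difference formula. One must make sure each affected pair $u_iw$ changes by exactly one, which holds because each such pair lies in only the single modified triple $u_1u_2w$. One must also check that the cumulative codegree errors are charged to $d_{\mathcal M}(u_i)$ and $d_{\mathcal B}(u_i)$ without double counting. The essential gain is the asymmetry that cross pairs have codegree about $n$ while same-side pairs have codegree about $n/2$. This gives roughly $4n$ per added triple against $2n$ per removed triple, and \ref{LEMMA:K53-L2-switching-comparison} keeps $b$ from exceeding $m$ by more than $\xi n$. I would keep the constants loose, since the $\xi^{1/2}$ threshold leaves ample room.
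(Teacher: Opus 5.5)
Your proposal is correct and is essentially the paper's argument: the same pair-by-pair difference formula, cross pairs gaining about $4n$ per added triple against about $2n$ lost per removed triple, the $e_\ast$ term handled by (iv), and (iii) closing. The paper bounds the deficit sums via a heavy/light split of $S_1,S_2$ (giving $O(\xi^{1/2}n^2)$ errors), whereas you sum $\sum_w d_{\mathcal M}(u_iw)$ directly against the vertex degree (giving $O(\xi n^2)$ errors); note one harmless slip there, since a missing triple $u_iww'$ with $w,w'\in N_{\mathcal M}(e_\ast)$ is counted for both $w$ and $w'$ (and likewise for bad triples), so the correct bounds are $2d_{\mathcal M}(u_i)\le 2\xi n^2$ and $2d_{\mathcal B}(u_i)\le 2\xi n^2$, which only enlarges your constant $C$.
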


\begin{proof}[Proof of Lemma~\ref{LEMMA:K53-L2-switching}]
    Choose $\xi_{\ref{LEMMA:K53-L2-switching}}>0$ sufficiently small. Fix $0<\xi\le \xi_{\ref{LEMMA:K53-L2-switching}}$, and choose $N_{\ref{LEMMA:K53-L2-switching}}$ sufficiently large so that $2\le 3\xi^{1/2}n$ for all $n\ge N_{\ref{LEMMA:K53-L2-switching}}$. By symmetry, assume that $e_\ast=\{u_1,u_2\}\subseteq V_1$. Let
    \begin{align*}
        S_1&\coloneqq \{u_1w:w\in N_{\mathcal M}(e_\ast)\}\cup\{u_2w:w\in N_{\mathcal M}(e_\ast)\},\\
        S_2&\coloneqq \{u_1w:w\in N_{\mathcal B}(e_\ast)\}\cup\{u_2w:w\in N_{\mathcal B}(e_\ast)\}.
    \end{align*}
    Then $|S_1|=2d_{\mathcal M}(e_\ast)$ and $|S_2|=2d_{\mathcal B}(e_\ast)$. Apart from $e_\ast$, the only affected pairs are those in $S_1\cup S_2$. Each pair in $S_1$ has its codegree increased by one, and each pair in $S_2$ has its codegree decreased by one. Hence
    \begin{align}
        \norm{\mathcal G^\ast}_{2}-\norm{\mathcal G}_{2}
        &\ge \bigl(d_{\mathcal M}(e_\ast)-d_{\mathcal B}(e_\ast)\bigr)\bigl(d_{\mathcal G^\ast}(e_\ast)+d_{\mathcal G}(e_\ast)\bigr) +2\sum_{e\in S_1}d_{\mathcal G}(e)-2\sum_{e\in S_2}d_{\mathcal G}(e) \notag\\
        &\ge -2\xi n^2+2\sum_{e\in S_1}d_{\mathcal G}(e)-2\sum_{e\in S_2}d_{\mathcal G}(e), \label{equ:K53-L2-switching-difference-start}
    \end{align}
    where the last inequality follows from assumption~\ref{LEMMA:K53-L2-switching-comparison} and $d_{\mathcal G^\ast}(e_\ast)+d_{\mathcal G}(e_\ast)\le2n$.

    Let $S_1^h\coloneqq\{e\in S_1:d_{\mathcal M}(e)\ge \xi^{1/2}n\}$. Every missing edge counted by $\sum_{e\in S_1^h}d_{\mathcal M}(e)$ contains $u_1$ or $u_2$, and is counted at most twice. Thus
    \begin{align*}
        d_{\mathcal M}(u_1)+d_{\mathcal M}(u_2)\ge \frac12\sum_{e\in S_1^h}d_{\mathcal M}(e)\ge \frac12|S_1^h|\xi^{1/2}n.
    \end{align*}
    By assumption~\ref{LEMMA:K53-L2-switching-degree}, this gives $|S_1^h|\le4\xi^{1/2}n$. For every $e\in S_1\setminus S_1^h$, the pair $e$ is a cross pair, and hence $d_{\mathcal G}(e)=d_{\mathbb B[V_1,V_2]}(e)-d_{\mathcal M}(e)\ge n-2-\xi^{1/2}n\ge n-4\xi^{1/2}n$. Since $|S_1|\le2n$, it follows that
    \begin{align}
        \sum_{e\in S_1}d_{\mathcal G}(e)\ge n|S_1|-12\xi^{1/2}n^2. \label{equ:K53-L2-switching-S1-lower}
    \end{align}

    Similarly, let $S_2^h\coloneqq\{e\in S_2:d_{\mathcal B}(e)\ge \xi^{1/2}n\}$. Assumption~\ref{LEMMA:K53-L2-switching-degree} gives $|S_2^h|\le4\xi^{1/2}n$. For every $e\in S_2\setminus S_2^h$, the pair $e$ is a same-side pair, and by assumption~\ref{LEMMA:K53-L2-switching-balance},
    \begin{align*}
        d_{\mathcal G}(e)\le d_{\mathbb B[V_1,V_2]}(e)+d_{\mathcal B}(e)\le \frac n2+\xi n+\xi^{1/2}n.
    \end{align*}
    Since $|S_2|\le2n$ and $d_{\mathcal G}(e)\le n$ for all pairs $e$, the choice of $\xi_{\ref{LEMMA:K53-L2-switching}}$ and the inequality $\xi\le\xi_{\ref{LEMMA:K53-L2-switching}}$ give
    \begin{align}
        \sum_{e\in S_2}d_{\mathcal G}(e)\le |S_2|\left(\frac n2+\xi n+\xi^{1/2}n\right)+|S_2^h|n<\frac n2|S_2|+10\xi^{1/2}n^2. \label{equ:K53-L2-switching-S2-upper}
    \end{align}
    Combining~\eqref{equ:K53-L2-switching-difference-start},~\eqref{equ:K53-L2-switching-S1-lower}, and~\eqref{equ:K53-L2-switching-S2-upper}, we obtain
    \begin{align*}
        \norm{\mathcal G^\ast}_{2}-\norm{\mathcal G}_{2}
        &> -2\xi n^2+2\bigl(n|S_1|-12\xi^{1/2}n^2\bigr)-2\left(\frac n2|S_2|+10\xi^{1/2}n^2\right)\\
        &=2n\bigl(2d_{\mathcal M}(e_\ast)-d_{\mathcal B}(e_\ast)\bigr)-(44\xi^{1/2}+2\xi)n^2\\
        &\ge 2n\bigl(d_{\mathcal M}(e_\ast)-\xi n\bigr)-(44\xi^{1/2}+2\xi)n^2\\
        &\ge (50\xi^{1/2}-4\xi)n^2>0,
    \end{align*}
    where the last two inequalities use assumptions~\ref{LEMMA:K53-L2-switching-heavy} and~\ref{LEMMA:K53-L2-switching-comparison}, and the final inequality follows from the choice of $\xi_{\ref{LEMMA:K53-L2-switching}}$.
    This completes the proof of Lemma~\ref{LEMMA:K53-L2-switching}.
\end{proof}

\subsection{Proof of Theorem~\ref{THM:L2-exact-K53}}\label{SUBSEC:K53-L2-proof-exact}

We now complete the proof of the exact $\ell_2$-norm Tur\'{a}n theorem for $K_5^3$.

\begin{proof}[Proof of Theorem~\ref{THM:L2-exact-K53}]
    Let $\xi_{\ref{LEMMA:K53-L2-switching}}$ be the constant from Lemma~\ref{LEMMA:K53-L2-switching}. Choose $\eta>0$ sufficiently small so that Lemma~\ref{LEMMA:K53-L2-local-degree-control} and Lemma~\ref{LEMMA:common-local-bad-missing-control} apply with parameter $\eta$. Set $\xi\coloneqq7\eta^{1/2}$, and assume, by decreasing $\eta$ if necessary, that
    \begin{align*}
        \xi<\xi_{\ref{LEMMA:K53-L2-switching}},\qquad 47\xi^{1/2}\le \frac1{10},\qquad 2\eta\le \xi.
    \end{align*}
    Choose $0<\delta_0<1$ sufficiently small so that Lemma~\ref{LEMMA:common-near-bipartite-setup} applies with parameter $\delta_0$, and
    \begin{align*}
        \delta_0\le \delta_{\ref{LEMMA:K53-L2-local-degree-control}}(\eta),\qquad \delta_0\le \delta_{\ref{LEMMA:common-local-bad-missing-control}}(\eta),\qquad 10\delta_0^{1/2}\le \xi.
    \end{align*}
    Choose $n_0$ sufficiently large so that all lemmas used below apply with the parameters fixed above.

    Let $n\ge n_0$, and let $\mathcal H$ be an $n$-vertex $K_5^3$-free $3$-graph with $\norm{\mathcal H}_{2}=\mathrm{ex}_{\ell_2}(n,K_5^3)$. Since $\mathbb B_n$ is $K_5^3$-free, and by the exact formula for $\norm{\mathbb B_n}_{2}$ and the choice of $n_0$, we have
    \begin{align*}
        \norm{\mathcal H}_{2}\ge\norm{\mathbb B_n}_{2}\ge \left(\frac58-\delta_{\ref{THM:K53-L2-BCL-stability}}(\delta_0)\right)\frac{n^4}{2}.
    \end{align*}
    By Theorem~\ref{THM:K53-L2-BCL-stability}, the graph $\mathcal H$ is $\delta_0$-close to $\mathbb B_n$.

    Let $V_1\cup V_2=V(\mathcal H)$ be a maximum partition of $\mathcal H$, and let $\mathcal B$ and $\mathcal M$ be the corresponding sets of bad and missing edges. Lemma~\ref{LEMMA:common-near-bipartite-setup}, together with the choice of $\delta_0$, gives
    \begin{align}
        \max_{i\in[2]}\left||V_i|-\frac n2\right|\le 10\delta_0^{1/2}n\le \xi n. \label{equ:K53-L2-proof-balance}
    \end{align}
    Lemma~\ref{LEMMA:K53-L2-local-degree-control} and the choice of $\eta$ give
    \begin{align}
        \max\{\Delta(\mathcal B),\Delta(\mathcal M)\}\le 2\eta n^2\le \xi n^2. \label{equ:K53-L2-proof-degree}
    \end{align}
    By Lemma~\ref{LEMMA:common-local-bad-missing-control}, every bad edge $E\in\mathcal B$ contains a pair $e\subseteq E$ with $d_{\mathcal M}(e)\ge n/10$. Moreover, for every $e\in\overline K[V_1,V_2]$,
    \begin{align}
        d_{\mathcal M}(e)\ge d_{\mathcal B}(e)-7\eta^{1/2}n=d_{\mathcal B}(e)-\xi n. \label{equ:K53-L2-proof-pair-comparison}
    \end{align}

    Suppose, for a contradiction, that $\mathcal B\ne\emptyset$. Define
    \begin{align*}
        \mathcal I\coloneqq\left\{e\in\overline K[V_1,V_2]:d_{\mathcal M}(e)\ge \frac n{10}\text{ and }\mathcal B(e)\ne\emptyset\right\}=\{e_1,\ldots,e_k\}.
    \end{align*}
    Since every bad edge lies inside one part, the preceding paragraph shows that every bad edge contains some pair from $\mathcal I$.

    Let $\mathcal H_0\coloneqq\mathcal H$, $\mathcal B_0\coloneqq\mathcal B$, and $\mathcal M_0\coloneqq\mathcal M$. For $j\in[k]$, once $\mathcal H_{j-1}$, $\mathcal B_{j-1}$, and $\mathcal M_{j-1}$ have been defined, we proceed as follows. If $\mathcal B_{j-1}(e_j)=\emptyset$, then set $\mathcal H_j\coloneqq\mathcal H_{j-1}$, $\mathcal B_j\coloneqq\mathcal B_{j-1}$, and $\mathcal M_j\coloneqq\mathcal M_{j-1}$. Otherwise set
    \begin{align*}
        \mathcal H_j\coloneqq(\mathcal H_{j-1}\setminus\mathcal B_{j-1}(e_j))\cup\mathcal M_{j-1}(e_j),\qquad \mathcal B_j\coloneqq\mathcal H_j\setminus\mathbb B[V_1,V_2],\qquad \mathcal M_j\coloneqq\mathbb B[V_1,V_2]\setminus\mathcal H_j.
    \end{align*}
    This procedure only deletes bad edges and only adds edges of $\mathbb B[V_1,V_2]$, so
    \begin{align*}
        \mathcal B=\mathcal B_0\supseteq\mathcal B_1\supseteq\cdots\supseteq\mathcal B_k\qquad\text{and}\qquad \mathcal M=\mathcal M_0\supseteq\mathcal M_1\supseteq\cdots\supseteq\mathcal M_k.
    \end{align*}
    Every edge of $\mathbb B[V_1,V_2]$ contains exactly one same-side pair. Hence, before step $j$, the additions made at pairs different from $e_j$ do not affect the missing edges containing $e_j$. Consider a step $j$ with $\mathcal B_{j-1}(e_j)\ne\emptyset$. Then $e_j\in\partial\mathcal H_{j-1}$, and
    \begin{align*}
        d_{\mathcal M_{j-1}}(e_j)=d_{\mathcal M}(e_j)\ge \frac n{10}\ge47\xi^{1/2}n,\qquad d_{\mathcal M_{j-1}}(e_j)\ge d_{\mathcal B_{j-1}}(e_j)-\xi n.
    \end{align*}
    The second inequality follows from the equality above,~\eqref{equ:K53-L2-proof-pair-comparison}, and the inclusion $\mathcal B_{j-1}\subseteq\mathcal B$. Together with~\eqref{equ:K53-L2-proof-balance} and~\eqref{equ:K53-L2-proof-degree}, the inclusions $\mathcal B_{j-1}\subseteq\mathcal B$ and $\mathcal M_{j-1}\subseteq\mathcal M$ show that Lemma~\ref{LEMMA:K53-L2-switching} applies to $\mathcal H_{j-1}$ with the fixed partition $V_1\cup V_2$ and the pair $e_j$. Hence $\norm{\mathcal H_j}_{2}>\norm{\mathcal H_{j-1}}_{2}$ for every step with $\mathcal B_{j-1}(e_j)\ne\emptyset$, while the other steps leave the graph unchanged.
    
    At least one step is nontrivial. Indeed, choose an original bad edge $E\in\mathcal B$ and a pair $e_j\subseteq E$ with $e_j\in\mathcal I$. If $E$ is still present before step $j$, then $\mathcal B_{j-1}(e_j)\ne\emptyset$. Otherwise, $E$ was removed in an earlier nontrivial step. Therefore
    \begin{align*}
        \norm{\mathcal H_k}_{2}>\norm{\mathcal H}_{2}.
    \end{align*}
    Since each original bad edge contains a pair from $\mathcal I$, it is removed when one such pair is processed, unless it was removed earlier. Therefore $\mathcal H_k$ has no bad edge, and so $\mathcal H_k\subseteq\mathbb B[V_1,V_2]$. By Lemma~\ref{LEMMA:K53-L2-bipartite-max},
    \begin{align*}
        \norm{\mathcal H}_{2}<\norm{\mathcal H_k}_{2}\le\norm{\mathbb B_n}_{2}\le\norm{\mathcal H}_{2},
    \end{align*}
    a contradiction. Hence $\mathcal B=\emptyset$.

    If $\mathcal M\ne\emptyset$, then adding any edge of $\mathcal M$ keeps the graph bipartite and hence $K_5^3$-free. It also strictly increases the $\ell_2$-norm, since the codegrees of the three pairs contained in this edge all increase by one. This contradicts the extremality of $\mathcal H$. Therefore $\mathcal M=\emptyset$, and so $\mathcal H=\mathbb B[V_1,V_2]$. Finally, Lemma~\ref{LEMMA:K53-L2-bipartite-max} gives $\norm{\mathcal H}_{2}\le\norm{\mathbb B_n}_{2}$, while extremality gives the reverse inequality. Hence equality holds in Lemma~\ref{LEMMA:K53-L2-bipartite-max}, and so $\mathcal H\cong\mathbb B_n$. This proves that every extremal graph is isomorphic to $\mathbb B_n$, and completes the proof of Theorem~\ref{THM:L2-exact-K53}.
\end{proof}

\section{Counting cliques in \texorpdfstring{$K_{5}^{3}$}{K53}-free \texorpdfstring{$3$}{3}-graphs}\label{SEC:K53-count-cliques}
For a fixed $3$-graph $F$, the number of induced copies of $F$ in $\mathcal H$ is
\begin{align*}
    \mathrm N_{\mathrm{ind}}(F,\mathcal H)
    \coloneqq \left|\left\{S\in\binom{V(\mathcal H)}{|V(F)|}\colon \mathcal H[S]\cong F\right\}\right|.
\end{align*}
For complete $3$-graphs, induced and non-induced copies coincide, and we write $\mathrm N(K_t^3,\mathcal H)\coloneqq \mathrm N_{\mathrm{ind}}(K_t^3,\mathcal H)$.
Since a $K_{5}^{3}$-free $3$-graph has no cliques of size at least $5$, we have
\begin{align}
    k(\mathcal H)
    = 1+n+\binom{n}{2}+|\mathcal H|+\mathrm N(K_4^3,\mathcal H). \label{equ:K53-count-clique-decomposition}
\end{align}
Thus, by~\eqref{equ:K53-count-clique-decomposition}, to prove Theorem~\ref{THM:cliques-exact-K53}, it is enough to maximize
\begin{align*}
    \Psi(\mathcal H)
    \coloneqq |\mathcal H|+\mathrm N(K_4^3,\mathcal H).
\end{align*}
In this section, we first establish a stability result for $K_5^3$-free $3$-graphs with many copies of $K_4^3$. We then apply a local modification argument as in Section~\ref{SEC:K53-L2norm} and finally prove Theorem~\ref{THM:cliques-exact-K53}.

\subsection{The edge-stability result}\label{SUBSEC:K53-count-edge-stability}

We first prove the stability statement that will be used in the exact argument.
Let $E_t$ denote the empty $3$-graph on $t$ vertices. Let $K_4^{3-}$ and $K_5^{3-}$ be obtained from the complete $3$-graphs $K_4^3$ and $K_5^3$, respectively, by deleting one edge. Let $K_5^{3=}$ be the $5$-vertex $3$-graph obtained from $K_5^3$ by deleting two edges whose intersection has size one. Finally, let $J_4$ be the $5$-vertex $3$-graph on $\{z\}\cup W$, where $|W|=4$, whose edges are exactly the six sets $\{z\}\cup e$ with $e\in\binom W2$.

The certificate in Bodn\'{a}r~\cite[Eq.~(3)]{Bod23} expresses the extremal deficit as a sum of positive semidefinite flag terms, with finite-order terms absorbed by taking $n$ sufficiently large. Hence, for every near-extremal sequence, the corresponding squared rooted statistics have vanishing mean. Translating the first, third, fourth, and fifth square terms of that certificate back to the original $3$-graph, and applying the standard extraction of stability information from flag algebra certificates as in~\cite{PST19}, gives the following proposition.

\begin{proposition}[see Bodn\'{a}r~\cite{Bod23}, Eq.~(3)]\label{PROP:K53-count-flag-local}
    For every $\varepsilon>0$, there exist $\delta_{\ref{PROP:K53-count-flag-local}}\coloneqq\delta_{\ref{PROP:K53-count-flag-local}}(\varepsilon)>0$ and $N_{\ref{PROP:K53-count-flag-local}}\coloneqq N_{\ref{PROP:K53-count-flag-local}}(\varepsilon)$ such that the following holds for all $n\ge N_{\ref{PROP:K53-count-flag-local}}$.
    Suppose that $\mathcal H$ is an $n$-vertex $K_5^3$-free $3$-graph with
    \begin{align*}
        \mathrm N(K_4^3,\mathcal H)\ge \left(\frac1{64}-\delta_{\ref{PROP:K53-count-flag-local}}\right)n^4.
    \end{align*}
    Then the following statements hold.
    \begin{enumerate}[label=(\roman*), ref=(\roman*)]
        \item\label{PROP:K53-count-flag-local-degree} All but at most $\varepsilon n$ vertices $v\in V(\mathcal H)$ satisfy
        \begin{align*}
            \left|d_{\mathcal H}(v)-\frac38 n^2\right|\le \varepsilon n^2.
        \end{align*}
        \item\label{PROP:K53-count-flag-local-K4} All but at most $\varepsilon n^3$ edges $E\in\mathcal H$ satisfy
        \begin{align*}
            \left|\left|\left\{x\in V(\mathcal H)\setminus E\colon \mathcal H[E\cup\{x\}]\cong K_4^3\right\}\right|-\frac12 n\right|\le \varepsilon n.
        \end{align*}
        \item\label{PROP:K53-count-flag-local-K4minus} All but at most $\varepsilon n^3$ edges $E\in\mathcal H$ satisfy
        \begin{align*}
            \left|\left|\left\{x\in V(\mathcal H)\setminus E\colon \mathcal H[E\cup\{x\}]\cong K_4^{3-}\right\}\right|-\frac12 n\right|\le \varepsilon n.
        \end{align*}
        \item\label{PROP:K53-count-flag-local-K5minus} For all but at most $\varepsilon n^4$ four-vertex sets $Q$ inducing $K_4^{3-}$,
        \begin{align*}
            \left|\left|\left\{x\in V(\mathcal H)\setminus Q\colon \mathcal H[Q\cup\{x\}]\cong K_5^{3-}\right\}\right|-\frac12 n\right|\le \varepsilon n.
        \end{align*}
    \end{enumerate}
\end{proposition}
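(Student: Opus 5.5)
The plan is to extract the four local statements from Bodn\'{a}r's flag algebra certificate in the standard way of~\cite{PST19}. The certificate \cite[Eq.~(3)]{Bod23} has the form
\begin{align*}
\frac{1}{64}-\mathrm N(K_4^3,\cdot)/n^4 = \sum_k \bigl[\!\bigl[ Q_k \bigr]\!\bigr]_{\sigma_k} + \sum_j c_j F_j + o(1),
\end{align*}
where each $Q_k$ is a positive semidefinite quadratic form in $\sigma_k$-flags, $\bigl[\!\bigl[\cdot\bigr]\!\bigr]_{\sigma_k}$ is the averaging (unlabeling) operator, and $c_j\ge 0$ multiplies densities $F_j$ of $5$-vertex $3$-graphs. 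Under the hypothesis $\mathrm N(K_4^3,\mathcal H)\ge (1/64-\delta)n^4$, every term on the right is at most $\delta+o(1)$. I would then treat the first, third, fourth and fifth square terms separately. For each, I would write the flag vector $f_k$ with $Q_k=f_k^{\top}P_kf_k$, and use $P_k\succeq \lambda v v^{\top}$ for the relevant eigendirection $v$. This gives
\begin{align*}
\mathbb E_{\theta}\bigl[(v\cdot f_k(\theta))^2\bigr]\le \delta/\lambda+o(1),
\end{align*}
where the expectation is over uniformly random injective embeddings $\theta$ of the type $\sigma_k$ into $\mathcal H$.

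The key step is to identify, term by term, the linear combination $v\cdot f_k(\theta)$ with the normalized deviations in (i)--(iv).

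\begin{itemize}
\item For the first term the type is a single vertex. The combination should equal $d_{\mathcal H}(v)/\binom{n}{2}-3/4$ up to $o(1)$, i.e.\ the deviation of $d_{\mathcal H}(v)$ from $\frac38n^2$.
\item For the third and fourth terms the type is an edge (a labelled triple forming an edge). The combinations should be $|\{x:\mathcal H[E\cup\{x\}]\cong K_4^3\}|/n-\tfrac12$ and the analogous $K_4^{3-}$ count.
\item For the fifth term the type is $K_4^{3-}$, and the combination should be the normalized count of extensions to $K_5^{3-}$, minus $\tfrac12$.
\end{itemize}

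Once these identifications are in place, Markov's inequality turns a mean-square bound of $O(\delta)$ into the statements. For example, the proportion of vertices with deviation more than $\varepsilon$ is at most $O(\delta)/\varepsilon^2$, which gives (i) with $\delta\le c\varepsilon^3$. For (ii)--(iii) the average is taken over labelled edges. Since there are at most $n^3$ edges, the exceptional edges number at most $O(\delta/\varepsilon^2)n^3\le\varepsilon n^3$. Item (iv) is identical, over labelled copies of $K_4^{3-}$, giving at most $\varepsilon n^4$ exceptions. Finally, the $o(1)$ errors from finite-$n$ flag densities versus limits, and from labelled versus unlabelled counting, are absorbed by taking $N$ large.

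I expect the main obstacle to be the bookkeeping that confirms the chosen eigen-direction of each $P_k$ really corresponds to the claimed statistic. Concretely, one must check that the constant vector evaluates to the right target ($\tfrac38$ or $\tfrac12$) in $\mathbb B_n$, and that the relevant flags sum to exactly the extension counts. I would first verify this by evaluating every flag on the limit object $\mathbb B_n$ with all types embedded: the target vector should be an eigenvector of $P_k$ with positive eigenvalue, orthogonal to the kernel that $\mathbb B_n$ forces. If a term's square instead controls a combination like $(\text{count}-\tfrac12)$ mixed with other flags, I would fall back to bounding the other flags by their own squares or through the degree statement (i), combined with Cauchy--Schwarz.
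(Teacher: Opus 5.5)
Your proposal is correct and follows essentially the same route as the paper. The paper also obtains (i)--(iv) by translating the first, third, fourth, and fifth square terms of Bodn\'{a}r's certificate~\cite[Eq.~(3)]{Bod23} back to $\mathcal H$, absorbing finite-$n$ errors by taking $n$ large, and applying the standard extraction of stability information from flag algebra certificates as in~\cite{PST19}; like you, it leaves the term-by-term identification of the squared rooted statistics with the extension counts to the cited certificate, so your explicit Markov step with $\delta\le c\varepsilon^3$ is, if anything, slightly more detailed than the paper's sketch.
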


The next lemma converts these local consequences into a statement about five-vertex induced subgraphs.\begingroup\renewcommand{\thefootnote}{\fnsymbol{footnote}}\footnote[1]{Lemma~\ref{LEMMA:K53-count-five-types} can also be verified directly by the flag algebra computation. The corresponding code and certificate are available at \href{https://github.com/xliu2022/xliu2022.github.io/tree/main/FlagAlgebra_Certificate/ForbidK53CountK43}{\url{https://github.com/xliu2022/xliu2022.github.io/tree/main/FlagAlgebra_Certificate/ForbidK53CountK43}}.}\endgroup

\begin{lemma}\label{LEMMA:K53-count-five-types}
    Let $\varepsilon>0$ be sufficiently small, and let $n$ be sufficiently large.
    Suppose that $\mathcal H$ is an $n$-vertex $K_5^3$-free $3$-graph satisfying the conclusions of Proposition~\ref{PROP:K53-count-flag-local} with parameter $\varepsilon$.
    Then all but at most $8\varepsilon n^5$ five-vertex sets induce one of $E_5,J_4,K_5^{3-}$.
\end{lemma}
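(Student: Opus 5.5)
Our approach is to show that, for a five-vertex set $S$ chosen uniformly at random, each of the local conclusions of Proposition~\ref{PROP:K53-count-flag-local} is violated at some vertex, edge, or $4$-set of $S$ with probability at most about $\varepsilon$. We then show by a finite case analysis that a five-vertex $K_5^3$-free $3$-graph in which none of these violations occur, and which is ``locally typical'' in the sense encoded below, must be $E_5$, $J_4$, or $K_5^{3-}$. Concretely, call a five-set $S$ \emph{bad} if either some vertex of $S$ violates~\ref{PROP:K53-count-flag-local-degree}, some edge $E\subseteq S$ violates~\ref{PROP:K53-count-flag-local-K4} or~\ref{PROP:K53-count-flag-local-K4minus}, or some four-set $Q\subseteq S$ inducing $K_4^{3-}$ violates~\ref{PROP:K53-count-flag-local-K5minus}. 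A vertex appears in at most $n^4$ five-sets, an edge in at most $n^2$, and a four-set in at most $n$. Hence the number of bad five-sets is at most $\varepsilon n\cdot n^4+2\varepsilon n^3\cdot n^2+\varepsilon n^4\cdot n=4\varepsilon n^5$. This leaves room in the $8\varepsilon n^5$ budget for a further class of exceptions.

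The difficulty is that the local conditions are statements about counts of extensions, not about the structure inside a single five-set. So we cannot directly conclude the type of a good $S$ from $S$ alone. We therefore plan a double-counting argument over pairs (structure, extension vertex). Let $p_F$ denote the density of five-sets inducing $F$, for each isomorphism type $F$ of $K_5^3$-free $3$-graph on five vertices. Conditions~\ref{PROP:K53-count-flag-local-K4}, \ref{PROP:K53-count-flag-local-K4minus}, and~\ref{PROP:K53-count-flag-local-K5minus}, summed over edges and $K_4^{3-}$'s, give approximate linear identities among the $p_F$. For example, $\sum_E |\{x: E\cup x\cong K_4^3\}| = 4N(K_4^3)$ together with~\ref{PROP:K53-count-flag-local-K4} yields $4N(K_4^3)\approx \tfrac12 n|\mathcal H|$. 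Condition~\ref{PROP:K53-count-flag-local-degree} gives $|\mathcal H|\approx \tfrac18 n^3$, and hence $N(K_4^3)\approx n^4/64$, consistent with the hypothesis. Analogous identities arise for the number of (edge, extension) pairs of each type and for the number of ($K_4^{3-}$, $x$) pairs with $Q\cup x\cong K_5^{3-}$. Each such count equals a fixed nonnegative combination of the $p_F$'s, with coefficients given by how many edges or $K_4^{3-}$'s in $F$ extend in the prescribed way. Also $\sum_F p_F=1$.

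The key step is then a linear-programming argument. Writing these approximate identities (with error $O(\varepsilon)$) as constraints on the vector $(p_F)$, we aim to show that a suitable nonnegative combination of them reads $\sum_{F\notin\{E_5,J_4,K_5^{3-}\}} c_F p_F \le O(\varepsilon)$ with every $c_F\ge 1$. This would bound the density of other types by about $4\varepsilon$, for a total under $8\varepsilon n^5$ after normalizing by $\binom n5\le n^5$. One sanity check is that in $\mathbb B_n$ only $E_5$, $J_4$, and $K_5^{3-}$ appear, with densities $1/16$, $5/16$, and $10/16$ respectively, and these satisfy all the identities with equality. For instance, consider $K_5^{3-}$ with missing edge $xyz$ and two vertices $w,w'$. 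The edges $xyw$-type have one $K_4^3$-extension and one $K_4^{3-}$-extension, matching the $\tfrac12,\tfrac12$ split.

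The main obstacle is this finite case analysis. There are many five-vertex $K_5^3$-free $3$-graphs, and the needed dual certificate is exactly what the footnote indicates can be produced by a flag-algebra computation. I would first attempt a hand argument using complementary slackness. For each extension count, the ``deficit'' $\sum_E(\text{extensions}-\tfrac12 n)$ should be expressible so that every type other than the three allowed ones contributes strictly positively to some deficit that is forced to be $O(\varepsilon n^5)$. If a clean hand derivation stalls, I would fall back on verifying the dual LP certificate by computer over the list of five-vertex types.
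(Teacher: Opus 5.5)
\textbf{There is a genuine gap: the decisive step is missing.} Your framework is the right one. Everything needed is a nonnegative linear combination of the summed forms of conclusions (i)--(iv) of Proposition~\ref{PROP:K53-count-flag-local}, and the paper's proof is exactly such a certificate. But your proposal stops where the content of the lemma lies. You never produce the combination; you call it the main obstacle and defer it to a hand search or a computer LP. A computer-verified certificate would be legitimate (the paper's footnote mentions one), but it has to be exhibited.

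As written, nothing in your argument rules out five-sets inducing $K_5^{3=}$, or five-sets containing a four-set with one or two edges, so the lemma is not proved. The opening count of ``bad'' five-sets is also a dead end. As you observe yourself, typicality of the vertices, edges and four-sets inside one five-set says nothing about its isomorphism type, so that $4\varepsilon n^5$ never enters any bound. Your final constant ``about $4\varepsilon$'' is likewise unsupported.

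\textbf{The missing idea: organize the certificate through four-sets, not over all five-vertex types at once.}

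Step 1. Summing (i)--(iii) gives $|\mathcal H|\approx n^3/8$, $\mathrm N(K_4^3,\mathcal H)\approx n^4/64$ and $\mathrm N_{\mathrm{ind}}(K_4^{3-},\mathcal H)\approx n^4/48$.

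Step 2. Count pairs $(E,x)$ with $E\in\mathcal H$ and $x\notin E$. This gives $(n-3)|\mathcal H|=\sum_Q|\mathcal H[Q]|$ over all four-sets $Q$. Since $\frac18=4\cdot\frac1{64}+3\cdot\frac1{48}$ exactly, the four-sets with exactly one or two edges have total edge count $O(\varepsilon n^4)$. Each has at least one edge, so there are at most $4\varepsilon n^4$ of them, and they lie in at most $4\varepsilon n^5$ five-sets.

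Step 3. A short check shows that a five-vertex $3$-graph all of whose four-subsets have $0$, $3$ or $4$ edges is one of $E_5,J_4,K_5^{3-},K_5^{3=},K_5^3$. The last is excluded because $\mathcal H$ is $K_5^3$-free.

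Step 4. Summing (iv) gives $\mathrm N_{\mathrm{ind}}(K_5^{3-},\mathcal H)\approx n^5/192$. Counting pairs (copy of $K_4^3$, five-set containing it) gives $n\,\mathrm N(K_4^3,\mathcal H)\ge 3\,\mathrm N_{\mathrm{ind}}(K_5^{3-},\mathcal H)+\mathrm N_{\mathrm{ind}}(K_5^{3=},\mathcal H)$. Since $\frac1{64}=3\cdot\frac1{192}$ exactly, this forces $\mathrm N_{\mathrm{ind}}(K_5^{3=},\mathcal H)\le\frac72\varepsilon n^5$.

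The two exceptional classes total at most $4\varepsilon n^5+\frac72\varepsilon n^5<8\varepsilon n^5$. These two exact cancellations, together with the four-subset classification of five-vertex types, are what your proposal needs to supply.
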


\begin{proof}[Proof of Lemma~\ref{LEMMA:K53-count-five-types}]
    Let $V_0$ be the set of vertices satisfying the estimate in Proposition~\ref{PROP:K53-count-flag-local}~\ref{PROP:K53-count-flag-local-degree}. Then $|V(\mathcal H)\setminus V_0|\le\varepsilon n$. For every $v\in V(\mathcal H)$, we have $0\le d_{\mathcal H}(v)\le n^2/2$, and hence, for $n$ sufficiently large,
    \begin{align*}
        \left|3|\mathcal H|-\frac38n^3\right|
        \le \sum_{v\in V_0}\left|d_{\mathcal H}(v)-\frac38n^2\right|
        +\sum_{v\notin V_0}\left|d_{\mathcal H}(v)-\frac38n^2\right|
        \le \varepsilon n^3+\frac38\varepsilon n^3<\frac32\varepsilon n^3.
    \end{align*}
    Therefore, we have 
    \begin{align}
        \left||\mathcal H|-\frac18 n^3\right|\le \frac12\varepsilon n^3. \label{equ:K53-count-edge-estimate}
    \end{align}

    For an edge $E\in\mathcal H$, let
    \begin{align*}
        A_4(E)\coloneqq \left|\left\{x\in V(\mathcal H)\setminus E\colon \mathcal H[E\cup\{x\}]\cong K_4^3\right\}\right|.
    \end{align*}
    By Proposition~\ref{PROP:K53-count-flag-local}~\ref{PROP:K53-count-flag-local-K4}, all but at most $\varepsilon n^3$ edges $E$ satisfy $|A_4(E)-n/2|\le\varepsilon n$. Since $0\le A_4(E)\le n$ and $|\mathcal H|\le n^3/6$, we get
    \begin{align*}
        \left|\sum_{E\in\mathcal H}A_4(E)-\frac n2|\mathcal H|\right|
        \le \varepsilon n|\mathcal H|+\frac12\varepsilon n^4\le \frac23\varepsilon n^4.
    \end{align*}
    Each copy of $K_4^3$ is counted once for each of its four edges, so $\sum_{E\in\mathcal H}A_4(E)=4\mathrm N(K_4^3,\mathcal H)$. Combining this with~\eqref{equ:K53-count-edge-estimate}, we obtain
    \begin{align*}
        \left|4\mathrm N(K_4^3,\mathcal H)-\frac1{16}n^4\right|
        \le \left|\sum_{E\in\mathcal H}A_4(E)-\frac n2|\mathcal H|\right|+ \left|\frac n2|\mathcal H|-\frac1{16}n^4\right|
        \le \frac23\varepsilon n^4 + \frac14\varepsilon n^4
        = \frac{11}{12}\varepsilon n^4,
    \end{align*}
    and hence
    \begin{align}
        \left|\mathrm N(K_4^3,\mathcal H)-\frac1{64}n^4\right|
        \le \frac{11}{48}\varepsilon n^4
        \le \frac12\varepsilon n^4. \label{equ:K53-count-K4-estimate}
    \end{align}

    Similarly, by Proposition~\ref{PROP:K53-count-flag-local}~\ref{PROP:K53-count-flag-local-K4minus}, we have 
    \begin{align}
        \left|\mathrm N_{\mathrm{ind}}(K_4^{3-},\mathcal H)-\frac1{48}n^4\right|\le \frac12\varepsilon n^4. \label{equ:K53-count-K4minus-estimate}
    \end{align}
    
    Let $\mathcal X_4$ be the family of four-vertex sets inducing none of $E_4,K_4^{3-},K_4^3$.
    Such a four-set contains either one or two edges, so $|\mathcal X_4|\le\sum_{Q\in\mathcal X_4}|\mathcal H[Q]|$. Counting pairs $(E,x)$ with $E\in\mathcal H$ and $x\in V(\mathcal H)$ gives
    \begin{align*}
        n|\mathcal H|
        =4\mathrm N(K_4^3,\mathcal H)+3\mathrm N_{\mathrm{ind}}(K_4^{3-},\mathcal H)
        +\sum_{Q\in\mathcal X_4}|\mathcal H[Q]|+3|\mathcal H|.
    \end{align*}
    Therefore~\eqref{equ:K53-count-edge-estimate}, \eqref{equ:K53-count-K4-estimate}, and~\eqref{equ:K53-count-K4minus-estimate} imply
    \begin{align*}
        |\mathcal X_4|
        &\le n|\mathcal H|-4\mathrm N(K_4^3,\mathcal H)-3\mathrm N_{\mathrm{ind}}(K_4^{3-},\mathcal H)\\
        &\le \left|n|\mathcal H|-\frac18 n^4\right|
        +4\left|\mathrm N(K_4^3,\mathcal H)-\frac1{64}n^4\right|
        +3\left|\mathrm N_{\mathrm{ind}}(K_4^{3-},\mathcal H)-\frac1{48}n^4\right|
        \le 4\varepsilon n^4.
    \end{align*}
    Next, every induced copy of $K_5^{3-}$ contains exactly two induced copies of $K_4^{3-}$.
    For a four-set $Q$ inducing $K_4^{3-}$, let
    \begin{align*}
        A_{5^-}(Q)\coloneqq \left|\left\{x\in V(\mathcal H)\setminus Q\colon \mathcal H[Q\cup\{x\}]\cong K_5^{3-}\right\}\right|.
    \end{align*}
    By Proposition~\ref{PROP:K53-count-flag-local}~\ref{PROP:K53-count-flag-local-K5minus}, all but at most $\varepsilon n^4$ such four-sets $Q$ satisfy $|A_{5^-}(Q)-n/2|\le\varepsilon n$. Since $0\le A_{5^-}(Q)\le n$ and the number of such $Q$ is at most $n^4$, we have
    \begin{align*}
        \left|\sum_{Q}A_{5^-}(Q)-\frac n2\mathrm N_{\mathrm{ind}}(K_4^{3-},\mathcal H)\right|
        \le \frac32\varepsilon n^5,
    \end{align*}
    where the sum is over all four-sets $Q$ inducing $K_4^{3-}$. Since each induced copy of $K_5^{3-}$ contains exactly two induced copies of $K_4^{3-}$, we have $\sum_QA_{5^-}(Q)=2\mathrm N_{\mathrm{ind}}(K_5^{3-},\mathcal H)$. Together with~\eqref{equ:K53-count-K4minus-estimate}, this gives
    \begin{align*}
        \left|\mathrm N_{\mathrm{ind}}(K_5^{3-},\mathcal H)-\frac1{192}n^5\right|\le \varepsilon n^5.
    \end{align*}
    
    A direct check of the possible five-vertex $3$-graphs shows that if a five-set contains no member of $\mathcal X_4$, then it induces one of $E_5,J_4,K_5^{3-},K_5^{3=}$, or $K_5^3$. The last type is excluded because $\mathcal H$ is $K_5^3$-free. The number of five-sets containing a member of $\mathcal X_4$ is at most $4\varepsilon n^5$.
    It remains to bound the number of induced copies of $K_5^{3=}$.
    Counting pairs $(Q,S)$, where $S$ is a five-set and $Q\in\binom S4$ induces $K_4^3$, gives
    \begin{align*}
        n\mathrm N(K_4^3,\mathcal H)
        \ge 3\mathrm N_{\mathrm{ind}}(K_5^{3-},\mathcal H)+\mathrm N_{\mathrm{ind}}(K_5^{3=},\mathcal H).
    \end{align*}
    Indeed, $K_5^{3-}$ and $K_5^{3=}$ contain respectively three and one four-sets inducing $K_4^3$, and all remaining five-sets contribute nonnegatively to the left-hand side. It follows that
    \begin{align*}
        \mathrm N_{\mathrm{ind}}(K_5^{3=},\mathcal H)
        &\le n\left|\mathrm N(K_4^3,\mathcal H)-\frac1{64}n^4\right|
        +3\left|\mathrm N_{\mathrm{ind}}(K_5^{3-},\mathcal H)-\frac1{192}n^5\right|
        \le  \frac12\varepsilon n^5 + 3\varepsilon n^5
        = \frac72\varepsilon n^5.
    \end{align*}
    Hence the number of five-sets inducing none of $E_5,J_4,K_5^{3-}$ is at most
    \begin{align*}
        4\varepsilon n^5+\frac72\varepsilon n^5<8\varepsilon n^5,
    \end{align*}
    completing the proof.
\end{proof}

We also need the exact structure forced by these three five-vertex types.

\begin{lemma}\label{LEMMA:K53-count-exact-five-type-structure}
    Let $\mathcal H$ be a $3$-graph on $n\ge5$ vertices such that every five-vertex set induces one of $E_5,J_4,K_5^{3-}$. Then there is a partition $V(\mathcal H)=A\cup B$ such that $\mathcal H=\mathbb B[A,B]$.
\end{lemma}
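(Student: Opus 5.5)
The plan is to work with the complementary $3$-graph of non-edges, $\mathcal N\coloneqq\binom{V(\mathcal H)}{3}\setminus\mathcal H$, and to show that $\mathcal N=\binom{A}{3}\cup\binom{B}{3}$ for some partition $A\cup B$. Since $\mathbb B[A,B]$ consists exactly of the triples not contained in a single part, this is equivalent to $\mathcal H=\mathbb B[A,B]$. First I translate the hypothesis into a condition on $\mathcal N$. The complement of $E_5$ is all ten triples of the $5$-set. The complement of $J_4=\mathbb B[\{z\},W]$ is $\binom W3$. The complement of $K_5^{3-}$ is a single triple. Hence the hypothesis says: \emph{for every five-set $S$ there is $T\subseteq S$ with $|T|\in\{3,4,5\}$ such that $\mathcal N[S]=\binom T3$.} In particular every five-set contains a non-edge, so $\mathcal N\neq\emptyset$ since $n\ge5$.

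Next I show that non-edges which meet are contained in a common clique of $\mathcal N$. Call $T\subseteq V$ an $\mathcal N$-clique if $\binom T3\subseteq\mathcal N$. Suppose two distinct non-edges $F,F'$ intersect, so $|F\cup F'|\in\{4,5\}$, and choose a five-set $S\supseteq F\cup F'$. The set $T$ given by the local condition for $S$ contains both $F$ and $F'$, because every non-edge inside $S$ is a triple of $T$. Thus $F\cup F'$ is an $\mathcal N$-clique.

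From this I derive a partition-type statement. Let $T_1$ be a maximal $\mathcal N$-clique with $|T_1|\ge3$, which exists because $\mathcal N\ne\emptyset$. I claim every non-edge meeting $T_1$ lies inside $T_1$. Take such a non-edge $F$ and any $x\in F\setminus T_1$. The aim is to show $T_1\cup\{x\}$ is an $\mathcal N$-clique, contradicting maximality. Fix a triple $G\subseteq T_1$ meeting $F$. For any two vertices $a,b\in T_1$, apply the local condition to a five-set containing $\{a,b,x\}$ together with suitable vertices of $F\cup G$. Chaining the intersecting-non-edges observation through $F$ and $G$ should force $abx\in\mathcal N$. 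I expect this bookkeeping to be the fiddliest step, because one must always fit the relevant vertices into a single five-set. The $|F\cap T_1|\in\{1,2\}$ cases are handled by choosing $G$ to share a vertex with $F$ and using the previous paragraph twice.

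Finally, set $A\coloneqq T_1$ and $B\coloneqq V\setminus A$. By the claim, no non-edge meets both $A$ and $B$, and $\binom A3\subseteq\mathcal N$. It remains to show $\binom B3\subseteq\mathcal N$. Take $x,y,z\in B$ and $a,b\in A$, which is possible since $|A|\ge3$, and let $S=\{a,b,x,y,z\}$ with local set $T$. If $T$ contained $a$, then since $|T|\ge3$ it would contain a triple through $a$ and some vertex of $B$, namely $abx$-type or $axy$-type. That triple is a non-edge meeting both $A$ and $B$, which is impossible. The same holds for $b$. Hence $T\subseteq\{x,y,z\}$, so $T=\{x,y,z\}$ and $xyz\in\mathcal N$. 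Therefore $\mathcal N=\binom A3\cup\binom B3$, that is, $\mathcal H=\mathbb B[A,B]$. If $B=\emptyset$, this is the empty $3$-graph, which is consistent with the convention that $\mathbb B[A,\emptyset]$ is empty.
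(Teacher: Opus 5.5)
Your proof is correct, and it takes a genuinely different route from the paper's.

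\textbf{The paper's route.} It fixes one non-edge $abc$ and first notes that every four-set induces $E_4$, $K_4^{3-}$ or $K_4^3$. It then sorts the other vertices according to whether $\{a,b,c,v\}$ induces $E_4$ or $K_4^{3-}$. Finally it checks each class of triples (inside $A$, inside $B$, and the crossing triples through $A_0$) by exhibiting a specific five-set in which the forced edges and non-edges leave only one admissible type.

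\textbf{Your route.} You pass to the non-edge $3$-graph $\mathcal N$ and restate the hypothesis as $\mathcal N[S]=\binom{T}{3}$ with $|T|\ge 3$ for every five-set $S$. From this you get the closure property that two intersecting non-edges span an $\mathcal N$-clique. You take a maximal $\mathcal N$-clique as $A$, and the five-sets with two vertices in $A$ and three in $B$ force $B$ to be an $\mathcal N$-clique too.

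\textbf{What each buys.} Your argument isolates the structural reason the lemma holds and avoids the bookkeeping of forced edges, at the cost of a maximality argument. The paper's argument is a fully explicit local verification, with nothing left to choose.

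\textbf{The hedged step.} The step you called fiddly and left as ``should force'' is in fact one line, and it needs no case split on $|F\cap T_1|$. Pick $p\in F\cap T_1$. For distinct $a,b\in T_1$, let $G\subseteq T_1$ be a triple containing $\{p,a,b\}$; if $p\in\{a,b\}$, add any third vertex of $T_1$, which exists since $|T_1|\ge 3$. Then $p\in F\cap G$, so $|F\cup G|\le 5$, and $F\cup G$ is an $\mathcal N$-clique containing $x,a,b$. Hence $xab\in\mathcal N$, so $T_1\cup\{x\}$ is an $\mathcal N$-clique, contradicting maximality. You should write this out explicitly rather than leaving it as a sketch.
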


\begin{proof}[Proof of Lemma~\ref{LEMMA:K53-count-exact-five-type-structure}]
    Extending a four-vertex set to a five-vertex set shows that every four-vertex set induces one of $E_4,K_4^{3-},K_4^3$. Also, $\mathcal H$ has a nonedge, since otherwise some five-set would induce $K_5^3$. Fix $abc\notin\mathcal H$. For each $v\notin\{a,b,c\}$, the four-set $\{a,b,c,v\}$ induces either $E_4$ or $K_4^{3-}$. Let $A_0$ be the set of vertices $v$ for which $\{a,b,c,v\}$ induces $E_4$, let $B$ be the set of the remaining vertices, and set $A\coloneqq\{a,b,c\}\cup A_0$.
    
    We claim that $A$ and $B$ are independent, and that every edge meeting both $A$ and $B$ is present. First, the definition of $A_0$ gives all nonedges with one vertex in $A_0$ and two vertices in $\{a,b,c\}$. If $u,v\in A_0$, then the five-set $abcuv$ has at least seven nonedges, so it must induce $E_5$. Hence all edges in $\{a,b,c,u,v\}$ are absent. If $u,v,w\in A_0$, then $abuvw$ has all edges absent except possibly $uvw$, and this possible one-edge five-set is not one of the allowed types. Thus $uvw\notin\mathcal H$, and $A$ is independent.
    
    Next let $x,y\in B$. The five-set $abcxy$ contains the six edges obtained by adding $x$ or $y$ to a pair from $\{a,b,c\}$, and it misses $abc$. It cannot induce $E_5$ or $J_4$, and hence it induces $K_5^{3-}$. In particular, all edges with one vertex in $\{a,b,c\}$ and two vertices in $B$ are present. If $x,y,z\in B$, then $abxyz$ already contains all edges except possibly $xyz$. Since the allowed types do not include $K_5^3$, we have $xyz\notin\mathcal H$. Thus $B$ is independent.
    
    It remains to check the crossing edges involving vertices of $A_0$. Let $u\in A_0$ and $x\in B$. The five-set $abcux$ has the four forced nonedges $abc,abu,acu,bcu$ and the three forced edges $abx,acx,bcx$, so it must induce $J_4$, and therefore $aux,bux,cux\in\mathcal H$. If $u,v\in A_0$ and $x\in B$, then $abuvx$ has five forced edges and four forced nonedges, so the only allowed completion gives $uvx\in\mathcal H$. Finally, if $u\in A_0$ and $x,y\in B$, then $abuxy$ has the forced nonedge $abu$ and the eight forced edges other than $uxy$, so it must induce $K_5^{3-}$, giving $uxy\in\mathcal H$. Hence every crossing edge is present, and $\mathcal H=\mathbb B[A,B]$, completing the proof of Lemma~\ref{LEMMA:K53-count-exact-five-type-structure}. 
\end{proof}

\begin{theorem}\label{THM:K53-count-edge-stability}
    For every $\varepsilon>0$, there exist $\delta_{\ref{THM:K53-count-edge-stability}}\coloneqq\delta_{\ref{THM:K53-count-edge-stability}}(\varepsilon)>0$ and $N_{\ref{THM:K53-count-edge-stability}}\coloneqq N_{\ref{THM:K53-count-edge-stability}}(\varepsilon)$ such that the following holds for all $n\ge N_{\ref{THM:K53-count-edge-stability}}$.
    If $\mathcal H$ is an $n$-vertex $K_5^3$-free $3$-graph with
    \begin{align*}
        \mathrm N(K_4^3,\mathcal H)\ge \left(\frac1{64}-\delta_{\ref{THM:K53-count-edge-stability}}\right)n^4,
    \end{align*}
    then $\mathcal H$ is $\varepsilon$-close to $\mathbb B_n$.
\end{theorem}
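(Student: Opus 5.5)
The plan is to combine Proposition~\ref{PROP:K53-count-flag-local}, Lemma~\ref{LEMMA:K53-count-five-types}, and Lemma~\ref{LEMMA:K53-count-exact-five-type-structure} through a standard induced-removal argument. Given $\varepsilon>0$, we first pick an auxiliary $\varepsilon'>0$ small in terms of $\varepsilon$. Then we take $\delta_{\ref{THM:K53-count-edge-stability}}\le\delta_{\ref{PROP:K53-count-flag-local}}(\varepsilon')$, so that Proposition~\ref{PROP:K53-count-flag-local} and then Lemma~\ref{LEMMA:K53-count-five-types} apply. They show that all but $8\varepsilon' n^5$ five-sets of $\mathcal H$ induce one of $E_5,J_4,K_5^{3-}$. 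Let $\mathcal F$ be the finite family of all other $5$-vertex $3$-graphs. Then $\mathcal H$ contains at most $8\varepsilon' n^5$ induced copies of members of $\mathcal F$.

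Next we apply the induced hypergraph removal lemma (Rödl--Schacht) to $\mathcal F$. With $\varepsilon'$ small enough, it yields a $3$-graph $\mathcal H'$ on $V(\mathcal H)$ with $|\mathcal H\triangle\mathcal H'|\le \varepsilon n^3/4$ and no induced copy of any member of $\mathcal F$. Thus every five-set of $\mathcal H'$ induces $E_5$, $J_4$, or $K_5^{3-}$. By Lemma~\ref{LEMMA:K53-count-exact-five-type-structure}, $\mathcal H'=\mathbb B[A,B]$ for some partition $A\cup B$.

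It remains to show that $A\cup B$ is nearly balanced. Lemma~\ref{LEMMA:K53-count-five-types} gives $\mathrm N(K_4^3,\mathcal H)=(1/64\pm\varepsilon'/2)n^4$, and changing $\varepsilon n^3/4$ triples changes the $K_4^3$-count by at most $\varepsilon n^4/4$. In $\mathbb B[A,B]$ with $|A|=\alpha n$, the $K_4^3$'s are exactly the $2{+}2$ four-sets. Their number is $\binom{\alpha n}{2}\binom{(1-\alpha)n}{2}\approx \alpha^2(1-\alpha)^2n^4/4$, which equals $n^4/64$ only at $\alpha=1/2$. A quadratic deviation estimate therefore gives $|\alpha-1/2|\le O(\varepsilon^{1/2})$. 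Moving $O(\varepsilon^{1/2}n)$ vertices to reach a balanced partition $U_1\cup U_2$ changes $\mathbb B[A,B]$ by only $O(\varepsilon^{1/2}n^3)$ triples. The triangle inequality then bounds $|\mathcal H\triangle\mathbb B[U_1,U_2]|$; to get the final bound $\varepsilon n^3$, we run the argument with $\varepsilon$ replaced by a suitable power $\varepsilon^2/C$ at the outset.

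The main obstacle is the removal step. It must be the induced version, because the allowed types include the empty graph and their complements are not monotone. Its constants are tower-type, but only existence matters. Alternatively, one could avoid removal by a direct robust version of the proof of Lemma~\ref{LEMMA:K53-count-exact-five-type-structure}: choose a typical non-edge $abc$ whose extensions are mostly good five-sets, define $A,B$ as in that proof, and count the exceptional triples by averaging. That route is more elementary but messier, so I would try the removal-lemma route first.
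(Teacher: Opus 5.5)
Your proposal is correct and is essentially the paper's proof. The paper also bounds the bad five-sets via Proposition~\ref{PROP:K53-count-flag-local} and Lemma~\ref{LEMMA:K53-count-five-types}, and applies the induced removal lemma with an edit parameter of order $\varepsilon^2$ (namely $\varepsilon^2/128$). It then invokes Lemma~\ref{LEMMA:K53-count-exact-five-type-structure} and gets near-balance from $x^2(1-x)^2/4\le 1/64-t^2/16$ before moving vertices.

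One minor point: the $K_4^3$ estimate you attribute to Lemma~\ref{LEMMA:K53-count-five-types} is only established inside that lemma's proof, not in its statement. The paper instead uses the hypothesis $\mathrm N(K_4^3,\mathcal H)\ge(1/64-\delta_{\ref{THM:K53-count-edge-stability}})n^4$ directly, which is all your balance step needs.
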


\begin{proof}[Proof of Theorem~\ref{THM:K53-count-edge-stability}]
    It is enough to consider $0<\varepsilon\le1$.
    Apply the Induced Removal Lemma, see, e.g.,~\cite{RS09}, to the finite family
    \begin{align*}
        \mathcal F_5\coloneqq \{F\colon |V(F)|=5,\ F\not\cong E_5,J_4,K_5^{3-}\}
    \end{align*}
    with edit parameter $\varepsilon^2/128$, and let $\gamma>0$ be the corresponding counting threshold. Choose $0<\eta\le \gamma/8$ sufficiently small so that Lemma~\ref{LEMMA:K53-count-five-types} applies with parameter $\eta$. Then every $3$-graph with at most $8\eta n^5$ induced copies of members of $\mathcal F_5$ can be made induced-$\mathcal F_5$-free by changing at most $\frac{\varepsilon^2}{128}n^3$ edges.
    Choose
    \begin{align*}
        \delta_{\ref{THM:K53-count-edge-stability}}
        \le \min\left\{\delta_{\ref{PROP:K53-count-flag-local}}(\eta),\frac{\varepsilon^2}{128}\right\},
    \end{align*}
    and choose $N_{\ref{THM:K53-count-edge-stability}}$ sufficiently large so that all results used below apply and $N_{\ref{THM:K53-count-edge-stability}}\ge 2/\varepsilon$.
    
    Let $n\ge N_{\ref{THM:K53-count-edge-stability}}$, and let $\mathcal H$ satisfy the assumptions. Proposition~\ref{PROP:K53-count-flag-local} and Lemma~\ref{LEMMA:K53-count-five-types} give at most $8\eta n^5$ induced copies of members of $\mathcal F_5$. Hence, by the choice of $\eta$, there is a $3$-graph $\mathcal H'$ on the same vertex set such that
    \begin{align*}
        |\mathcal H\triangle \mathcal H'|\le \frac{\varepsilon^2}{128}n^3,
    \end{align*}
    and every five-vertex set of $\mathcal H'$ induces one of $E_5,J_4,K_5^{3-}$. By Lemma~\ref{LEMMA:K53-count-exact-five-type-structure}, there is a partition $V(\mathcal H')=A\cup B$ such that $\mathcal H'=\mathbb B[A,B]$.
    
    Changing one edge can affect at most $n$ four-vertex sets. Thus
    \begin{align*}
        \mathrm N(K_4^3,\mathcal H')\ge \left(\frac1{64}-\delta_{\ref{THM:K53-count-edge-stability}}-\frac{\varepsilon^2}{128}\right)n^4.
    \end{align*}
    In $\mathbb B[A,B]$, the copies of $K_4^3$ are exactly the four-sets with two vertices in each part. Put $x\coloneqq |A|/n$ and $t\coloneqq |x-1/2|$. Then
    \begin{align*}
        \frac1{64}-\delta_{\ref{THM:K53-count-edge-stability}}-\frac{\varepsilon^2}{128}
        \le \frac1{n^4}\binom{|A|}{2}\binom{|B|}{2}
        \le \frac{(|A||B|)^2}{4n^4}
        \le \frac{x^2(1-x)^2}{4}
        =\frac1{64}-\frac{t^2}{8}+\frac{t^4}{4}
        \le \frac1{64}-\frac{t^2}{16}.
    \end{align*}
    Therefore
    \begin{align*}
        \left||A|-\frac n2\right|\le 4\left(\delta_{\ref{THM:K53-count-edge-stability}}+\frac{\varepsilon^2}{128}\right)^{1/2}n.
    \end{align*}
    Move at most $4(\delta_{\ref{THM:K53-count-edge-stability}}+\varepsilon^2/128)^{1/2}n+1$ vertices from the larger part to the smaller one, and let $U_1\cup U_2=V(\mathcal H)$ be the resulting balanced partition. Moving one vertex changes at most $n^2/2$ edges of the corresponding bipartite $3$-graph. Hence
    \begin{align*}
        |\mathcal H\triangle\mathbb B[U_1,U_2]|
        \le \left(\frac{\varepsilon^2}{128}+2\left(\delta_{\ref{THM:K53-count-edge-stability}}+\frac{\varepsilon^2}{128}\right)^{1/2}+\frac1{2n}\right)n^3
        \le \varepsilon n^3,
    \end{align*}
    by the choice of $\delta_{\ref{THM:K53-count-edge-stability}}$ and $N_{\ref{THM:K53-count-edge-stability}}$. This proves that $\mathcal H$ is $\varepsilon$-close to $\mathbb B_n$.
\end{proof}

\subsection{Local degree control near \texorpdfstring{$\mathbb B_n$}{B n}}\label{SUBSEC:K53-count-local-degree-control}

We next prove the local control needed for the clique-counting replacement argument. The first input is the corresponding near-regularity statement for the number of copies of $K_4^3$ containing a fixed vertex.

\begin{lemma}\label{LEMMA:K53-count-kappa-regularity}
    Suppose that $\mathcal H$ is a $K_5^3$-free $n$-vertex $3$-graph such that
    \begin{align*}
        \Psi(\mathcal H)=\max\{\Psi(\mathcal G):\mathcal G \text{ is a } K_5^3\text{-free }3\text{-graph on } n \text{ vertices}\}.
    \end{align*}
    For each $v\in V(\mathcal H)$, let $\kappa_{\mathcal H}(v)$ denote the number of copies of $K_4^3$ in $\mathcal H$ containing $v$. Then for every vertex $v\in V(\mathcal H)$,
    \begin{align*}
        \left|\kappa_{\mathcal H}(v)-\frac{4\mathrm N(K_4^3,\mathcal H)}{n}\right|\le 2n^2.
    \end{align*}
\end{lemma}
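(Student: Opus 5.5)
The plan is the standard deletion--duplication argument, applied to the local objective
\begin{align*}
    \psi_{\mathcal H}(v)\coloneqq \Psi(\mathcal H)-\Psi(\mathcal H-v)=d_{\mathcal H}(v)+\kappa_{\mathcal H}(v).
\end{align*}
For $u,v\in V(\mathcal H)$, let $\mathcal H_{u\to v}$ be obtained by deleting all edges containing $u$ and then adding, for each pair $xy$ with $x,y\ne u,v$ and $vxy\in\mathcal H$, the edge $uxy$. Thus $u$ becomes a clone of $v$, with $u$ and $v$ nonadjacent (no edge contains both).

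The first step is that $\mathcal H_{u\to v}$ is $K_5^3$-free. Any copy of $K_5^3$ must contain $u$. It cannot contain $v$, since no edge contains both $u$ and $v$ and every pair in a $K_5^3$ lies in an edge. Replacing $u$ by $v$ then gives a copy of $K_5^3$ in $\mathcal H$, a contradiction. This is exactly the remark that the shadow of $K_5^3$ is complete.

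The second step is to compute $\Psi(\mathcal H_{u\to v})$. Write $\mathcal H'\coloneqq \mathcal H-u$. Then
\begin{align*}
    \Psi(\mathcal H_{u\to v})=\Psi(\mathcal H')+d_{\mathcal H_{u\to v}}(u)+\kappa_{\mathcal H_{u\to v}}(u),
\end{align*}
since edges and $K_4^3$'s avoiding $u$ are those of $\mathcal H'$. A copy of $K_4^3$ containing $u$ cannot contain $v$, so these copies correspond bijectively to copies of $K_4^3$ in $\mathcal H'$ containing $v$ but not $u$. Their number is $\kappa_{\mathcal H}(v)-\kappa_{\mathcal H}(\{u,v\})$, where $\kappa_{\mathcal H}(\{u,v\})\le\binom{n-2}{2}$ counts the copies containing both. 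Similarly $d_{\mathcal H_{u\to v}}(u)=d_{\mathcal H}(v)-d_{\mathcal H}(uv)\ge d_{\mathcal H}(v)-n$. Extremality of $\mathcal H$ gives $\Psi(\mathcal H_{u\to v})\le\Psi(\mathcal H)$, and hence
\begin{align*}
    d_{\mathcal H}(v)+\kappa_{\mathcal H}(v)-\binom{n-2}{2}-n\le d_{\mathcal H}(u)+\kappa_{\mathcal H}(u).
\end{align*}
Swapping the roles of $u$ and $v$ yields $|\psi_{\mathcal H}(u)-\psi_{\mathcal H}(v)|\le n^2/2$, say.

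The third step removes the degree term. Since $0\le d_{\mathcal H}(\cdot)\le\binom{n-1}{2}<n^2/2$, we get $|\kappa_{\mathcal H}(u)-\kappa_{\mathcal H}(v)|\le n^2/2+n^2/2=n^2$ for all $u,v$. Averaging over $u$ and using $\sum_u\kappa_{\mathcal H}(u)=4\mathrm N(K_4^3,\mathcal H)$, we obtain
\begin{align*}
    \left|\kappa_{\mathcal H}(v)-\frac{4\mathrm N(K_4^3,\mathcal H)}{n}\right|\le n^2\le 2n^2.
\end{align*}

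The only delicate point is the careful accounting in the second step: making sure the error terms $d_{\mathcal H}(uv)$ and $\kappa_{\mathcal H}(\{u,v\})$ are $O(n^2)$, and that the degree contribution to $\Psi$ is absorbed into that same $O(n^2)$ slack. Both hold comfortably within the constant $2$.
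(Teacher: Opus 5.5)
Your proposal is correct and follows the paper's argument. It turns $u$ into a clone of $v$, with no edge containing both, and uses the complete shadow of $K_5^3$ to keep the clone $K_5^3$-free. It then compares $\Psi$ via extremality, losing only the $O(n^2)$ edges and copies of $K_4^3$ that contain both $u$ and $v$. Finally it absorbs the degree terms using $d_{\mathcal H}(\cdot)\le\binom{n-1}{2}$, swaps $u$ and $v$, and averages via $\sum_u\kappa_{\mathcal H}(u)=4\mathrm N(K_4^3,\mathcal H)$. Because your bookkeeping is exact, you in fact obtain the slightly sharper bound $n^2$ in place of $2n^2$.
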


\begin{proof}[Proof of Lemma~\ref{LEMMA:K53-count-kappa-regularity}]
    Fix distinct vertices $u,v\in V(\mathcal H)$. Let $\mathcal H_{u\to v}$ be the $3$-graph obtained from $\mathcal H$ by deleting all edges containing $u$, and then adding $e\cup\{u\}$ for every pair $e\in\binom{V(\mathcal H)\setminus\{u,v\}}2$ such that $e\cup\{v\}\in\mathcal H$. This operation preserves $K_5^3$-freeness. Indeed, no edge of $\mathcal H_{u\to v}$ contains both $u$ and $v$, and any copy of $K_5^3$ using $u$ would give a copy of $K_5^3$ in $\mathcal H$ after replacing $u$ with $v$.

    All edges and copies of $K_4^3$ avoiding $u$ remain present. Moreover, every edge and every copy of $K_4^3$ counted at $v$ whose vertex set avoids $u$ gives, after replacing $v$ with $u$, the corresponding edge or copy of $K_4^3$ in $\mathcal H_{u\to v}$. The number of edges counted by $d_{\mathcal H}(v)$ that also contain $u$ is at most $n$, and the number of copies of $K_4^3$ counted by $\kappa_{\mathcal H}(v)$ that also contain $u$ is at most $\binom n2$. Hence
    \begin{align*}
        \Psi(\mathcal H_{u\to v})
        \ge \Psi(\mathcal H)-d_{\mathcal H}(u)-\kappa_{\mathcal H}(u)+d_{\mathcal H}(v)+\kappa_{\mathcal H}(v)-n^2.
    \end{align*}
    By the extremality of $\mathcal H$, we have $\Psi(\mathcal H)\ge \Psi(\mathcal H_{u\to v})$. Since $0\le d_{\mathcal H}(x)\le\binom{n-1}{2}$ for every vertex $x\in V(\mathcal H)$, it follows that
    \begin{align*}
        \kappa_{\mathcal H}(u)\ge \kappa_{\mathcal H}(v)-2n^2.
    \end{align*}
    Interchanging $u$ and $v$ gives $|\kappa_{\mathcal H}(u)-\kappa_{\mathcal H}(v)|\le 2n^2$. Finally, $\sum_{x\in V(\mathcal H)}\kappa_{\mathcal H}(x)=4\mathrm N(K_4^3,\mathcal H)$, and the lemma follows.
\end{proof}

The next lemma is the clique-counting analogue of Lemma~\ref{LEMMA:K53-L2-local-degree-control}. It shows that, once an extremal graph is known to be close to $\mathbb B_n$, no vertex is incident to many bad edges or many missing edges with respect to a maximum partition.

\begin{lemma}\label{LEMMA:K53-count-local-degree-control}
    Let $\eta>0$ be sufficiently small. There exist $\delta_{\ref{LEMMA:K53-count-local-degree-control}}\coloneqq\delta_{\ref{LEMMA:K53-count-local-degree-control}}(\eta)>0$ and $N_{\ref{LEMMA:K53-count-local-degree-control}}\coloneqq N_{\ref{LEMMA:K53-count-local-degree-control}}(\eta)$ such that the following holds for all $n\ge N_{\ref{LEMMA:K53-count-local-degree-control}}$.
    Suppose that $\mathcal H$ is an $n$-vertex $K_5^3$-free $3$-graph with
    \begin{align*}
        \Psi(\mathcal H)=\max\{\Psi(\mathcal G):\mathcal G \text{ is a } K_5^3\text{-free }3\text{-graph on } n \text{ vertices}\}.
    \end{align*}
    Suppose further that $\mathcal H$ is $\delta$-close to $\mathbb B_n$ for some $0<\delta\le\delta_{\ref{LEMMA:K53-count-local-degree-control}}$.
    Let $V_1\cup V_2=V(\mathcal H)$ be a maximum partition of $\mathcal H$, and let $\mathcal B$ and $\mathcal M$ be the bad and missing edges with respect to this partition. Then
    \begin{align*}
        \max\{\Delta(\mathcal B),\Delta(\mathcal M)\}\le 2\eta n^2.
    \end{align*}
\end{lemma}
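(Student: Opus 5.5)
The plan mirrors the proof of Lemma~\ref{LEMMA:K53-L2-local-degree-control}, with the $\ell_2$-degree $s_{\mathcal H}(v)$ replaced by $\kappa_{\mathcal H}(v)$. The key observation is that
\begin{align*}
    \kappa_{\mathcal H}(v)=\mathcal N(K_3,L_v)-(\text{number of triangles }xyz\text{ of }L_v\text{ with }xyz\notin\mathcal H)\le \mathcal N(K_3,L_v),
\end{align*}
where $L_v\coloneqq L_{\mathcal H}(v)$. The colored input will therefore be the crossing-triangle results, Theorem~\ref{THM:2-colored-K4-triangle-reduced} and Proposition~\ref{PROP:2-colored-K4-triangle-stability-sym}, in place of the $\ell_2$ bounds.

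\textbf{Step 1: a lower bound on $\kappa_{\mathcal H}(v)$ for every $v$.} By extremality, $\Psi(\mathcal H)\ge\Psi(\mathbb B_n)$. By Lemma~\ref{LEMMA:common-near-bipartite-setup}, $|\mathcal H|\le|\mathbb B_n|+\delta n^3$. Together these give $\mathrm N(K_4^3,\mathcal H)\ge n^4/64-2\delta n^3$. Lemma~\ref{LEMMA:K53-count-kappa-regularity} then yields, for every vertex $v$,
\begin{align*}
    \kappa_{\mathcal H}(v)\ge \frac{n^3}{16}-\alpha n^3 .
\end{align*}

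\textbf{Step 2: pass to crossing triangles in a balanced $K_4$-free graph.} Fix $v\in V_1$. Triangles of $L_v$ that lie inside $V_1$ or inside $V_2$ correspond to bad edges. Those inside $V_1$ number at most $\sum_{xy}d_{\mathcal B}(xy)\cdot$(small), so I bound them crudely by $3|\mathcal B|\le 6\delta n^3$, since each such triangle $xyz$ must itself be an edge, i.e. bad. The same bound holds inside $V_2$. Hence $\mathcal N_{\mathrm{cr}}(K_3,L_v)\ge n^3/16-2\alpha n^3$.

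By Lemma~\ref{LEMMA:common-balanced-k4-missing}, deleting at most $\rho n^2$ edges of $L_v$ gives a balanced-$K_4$-free graph $L'$. This deletion loses at most $\rho n^3$ crossing triangles. Next, pad the smaller part with isolated vertices so that both parts have size $m\le(1/2+10\delta^{1/2})n$. Then $\mathcal N_{\mathrm{cr}}(K_3,L')\ge m\binom m2-3\alpha m^3$, using $m\binom m2\approx n^3/16$.

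Proposition~\ref{PROP:2-colored-K4-triangle-stability-sym} now says that $L'$, and hence $L_v$, is within $\eta' n^2$ edits of $\Lambda[V_1;V_2]$ or of $\Lambda[V_2;V_1]$. The padding vertices are isolated, so $L'$ must be close to the version whose complete part is the side actually used. In the first case $|L_v[V_1]|\le\eta' n^2$ and $L_v[V_2]$ is nearly complete. In the second case $|L_v[V_2]|$ is small. Fact~\ref{FACT:common-maximum-partition-link} gives $|L_v[V_2]|\ge|L_v[V_1]|$, which excludes the second case unless both counts are small. In every case $d_{\mathcal B}(v)=|L_v[V_1]|\le\eta n^2$.

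\textbf{Step 3: the missing degree.} Closeness to $\Lambda[V_1;V_2]$ in the first case means that $L_v[V_2]\cup L_v[V_1,V_2]$ misses at most $\eta' n^2$ pairs of $\binom{V_2}{2}\cup K[V_1,V_2]$. These pairs are exactly the pairs $xy$ with $vxy\in\mathbb B[V_1,V_2]$, and the remaining entries of $\mathbb B[V_1,V_2](v)$ involve only the at most $n$ pairs through $v$. So $d_{\mathcal M}(v)\le 2\eta n^2$. The parameters are chosen as before: $\eta'\le\eta$, then $\alpha$, $\rho$ and $\delta$ successively smaller, and $N$ large.

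\textbf{Main obstacle.} The hardest step is the degenerate case of the stability dichotomy. There $L'$ is close to $\Lambda[V_2;V_1]$, but on an unbalanced part-size configuration, with $|V_1|\neq|V_2|$ and padding inside $V_2$. I must check that the near-extremal crossing-triangle count is incompatible with this orientation, or else that Fact~\ref{FACT:common-maximum-partition-link} forces $L_v[V_1]$ to be small anyway. I would handle this first by noting that $|V_1|$ and $|V_2|$ differ by only $O(\delta^{1/2}n)$, so the padding costs only $O(\delta^{1/2}n^2)$ edits. The dichotomy then transfers directly to the unpadded graph, and Fact~\ref{FACT:common-maximum-partition-link} settles the orientation.
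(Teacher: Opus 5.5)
Your proposal is correct and follows the paper's proof essentially step for step: a lower bound on $\kappa_{\mathcal H}(v)$ from Lemma~\ref{LEMMA:K53-count-kappa-regularity}, then charging the non-crossing triangles counted by $\kappa_{\mathcal H}(v)$ to $\mathcal B$, then cleaning $L_v$ via Lemma~\ref{LEMMA:common-balanced-k4-missing} and applying Proposition~\ref{PROP:2-colored-K4-triangle-stability-sym}, then excluding the orientation $\Lambda[V_2;V_1]$ by Fact~\ref{FACT:common-maximum-partition-link} (there $L_v[V_1]$ is nearly complete, so the exclusion is outright, not ``unless both counts are small''), and finally reading off $d_{\mathcal B}(v)$ and $d_{\mathcal M}(v)$ from the edit distance. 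The only difference is that you equalize the parts by padding the smaller side with isolated vertices, whereas the paper deletes vertices from the larger side; this is equally fine, since either way the discrepancy with $m\binom m2$ is $O(\delta^{1/2}n^3)$ (also, your intermediate bound $\mathrm N(K_4^3,\mathcal H)\ge n^4/64-2\delta n^3$ should read $n^4/64-O(n^3)$, which is harmless after dividing by $n$).
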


\begin{proof}[Proof of Lemma~\ref{LEMMA:K53-count-local-degree-control}]
    Choose $\alpha>0$ sufficiently small so that $\alpha\le \frac{\eta}{100}$ and $\alpha\le \frac{\varepsilon_{\ref{PROP:2-colored-K4-triangle-stability-sym}}(\eta/8)}{100}$.
    Choose $0<\delta_{\ref{LEMMA:K53-count-local-degree-control}}<1$ sufficiently small so that Lemma~\ref{LEMMA:common-near-bipartite-setup} is applicable with parameter $\delta_{\ref{LEMMA:K53-count-local-degree-control}}$, and so that
    \begin{align*}
        \delta_{\ref{LEMMA:K53-count-local-degree-control}}\le \delta_{\ref{LEMMA:common-balanced-k4-missing}}(\alpha),\qquad \delta_{\ref{LEMMA:K53-count-local-degree-control}}\le \frac{\alpha}{2},\qquad 50\delta_{\ref{LEMMA:K53-count-local-degree-control}}^{1/2}+5\alpha\le \min\left\{\frac{\eta}{4},\frac{\varepsilon_{\ref{PROP:2-colored-K4-triangle-stability-sym}}(\eta/8)}{16}\right\}.
    \end{align*}
    Choose $N_{\ref{LEMMA:K53-count-local-degree-control}}$ sufficiently large so that all results used below apply.

    Let $n\ge N_{\ref{LEMMA:K53-count-local-degree-control}}$, and let $\mathcal H$ satisfy the assumptions. Since $\mathbb B_n$ is $K_5^3$-free, the extremality of $\mathcal H$ gives $\Psi(\mathcal H)\ge \Psi(\mathbb B_n)$. Hence
    \begin{align*}
        \mathrm N(K_4^3,\mathcal H)
        =\Psi(\mathcal H)-|\mathcal H|
        \ge \Psi(\mathbb B_n)-|\mathcal H|
        \ge \mathrm N(K_4^3,\mathbb B_n)+|\mathbb B_n|-\binom n3.
    \end{align*}

    Since $\mathcal H$ is $\delta$-close to $\mathbb B_n$ and $\delta\le\delta_{\ref{LEMMA:K53-count-local-degree-control}}$, it is also $\delta_{\ref{LEMMA:K53-count-local-degree-control}}$-close to $\mathbb B_n$. Applying Lemma~\ref{LEMMA:common-near-bipartite-setup} to the maximum partition $V_1\cup V_2$ gives
    \begin{align}
        \max\{|\mathcal B|,|\mathcal M|\}\le 2\delta_{\ref{LEMMA:K53-count-local-degree-control}} n^3,\qquad \left||V_i|-n/2\right|\le 10\delta_{\ref{LEMMA:K53-count-local-degree-control}}^{1/2}n\text{ for each }i\in[2]. \label{equ:K53-count-local-degree-setup}
    \end{align}
    The lower bound above and Lemma~\ref{LEMMA:K53-count-kappa-regularity} show that, for every vertex $v\in V(\mathcal H)$,
    \begin{align}
        \kappa_{\mathcal H}(v)
        \ge \frac{4\mathrm N(K_4^3,\mathcal H)}{n}-2n^2 
        \ge \frac{4}{n}\left(\mathrm N(K_4^3,\mathbb B_n)+|\mathbb B_n|-\binom n3\right)-2n^2 
        \ge \left(\frac{1}{16}-\alpha\right)n^3. \label{equ:K53-count-local-degree-kappa-lower}
    \end{align}
    Here the last inequality follows from the choice of $N_{\ref{LEMMA:K53-count-local-degree-control}}$.

    Fix $v\in V(\mathcal H)$. By symmetry, assume that $v\in V_1$, and put $L\coloneqq L_{\mathcal H}(v)$. By Fact~\ref{FACT:common-maximum-partition-link}, we have $|L[V_2]|\ge |L[V_1]|$. 
    Since $\delta\le\delta_{\ref{LEMMA:K53-count-local-degree-control}}\le \delta_{\ref{LEMMA:common-balanced-k4-missing}}(\alpha)$, Lemma~\ref{LEMMA:common-balanced-k4-missing} gives a set of at most $\alpha n^2$ edges whose deletion makes $L$ balanced $K_4$-free with respect to $V_1\cup V_2$. Choose subsets $V_1'\subseteq V_1\setminus\{v\}$ and $V_2'\subseteq V_2$ with common size $m$ by deleting vertices from the larger of $V_1\setminus\{v\}$ and $V_2$. By~\eqref{equ:K53-count-local-degree-setup}, at most $22\delta_{\ref{LEMMA:K53-count-local-degree-control}}^{1/2}n$ vertices are deleted. Let $G$ be the balanced $K_4$-free graph on $V_1'\cup V_2'$ obtained from $L[V_1'\cup V_2']$ by deleting at most $\alpha n^2$ edges.

    Every copy of $K_4^3$ containing $v$ gives a triangle in $L$. If this triangle is not crossing with respect to $V_1\cup V_2$, then the other three vertices form a bad edge of $\mathcal H$. Therefore~\eqref{equ:K53-count-local-degree-setup} and~\eqref{equ:K53-count-local-degree-kappa-lower} imply
    \begin{align*}
        \mathcal N_{\mathrm{cr}}(K_3,L)\ge \kappa_{\mathcal H}(v)-|\mathcal B|\ge \left(\frac{1}{16}-2\alpha\right)n^3.
    \end{align*}
    Passing from $L$ to $G$ deletes at most $22\delta_{\ref{LEMMA:K53-count-local-degree-control}}^{1/2}n$ vertices and at most $\alpha n^2$ edges, so
    \begin{align*}
        \mathcal N_{\mathrm{cr}}(K_3,G)\ge \left(\frac{1}{16}-3\alpha-22\delta_{\ref{LEMMA:K53-count-local-degree-control}}^{1/2}\right)n^3.
    \end{align*}
    Also $m\le n/2$, while $m\ge (\frac{1}{2}-11\delta_{\ref{LEMMA:K53-count-local-degree-control}}^{1/2})n$ for $n$ sufficiently large. By the choices of $\alpha$, $\delta_{\ref{LEMMA:K53-count-local-degree-control}}$, and $N_{\ref{LEMMA:K53-count-local-degree-control}}$, this gives
    \begin{align*}
        \mathcal N_{\mathrm{cr}}(K_3,G)\ge m\binom m2-\varepsilon_{\ref{PROP:2-colored-K4-triangle-stability-sym}}(\eta/8)m^3.
    \end{align*}
    Proposition~\ref{PROP:2-colored-K4-triangle-stability-sym}, applied to $G$ with parameter $\eta/8$ and part size $m$, gives that $G$ can be transformed into either $\Lambda[V_1';V_2']$ or $\Lambda[V_2';V_1']$ by changing at most $\eta m^2/8\le \eta n^2/8$ edges.

    The second possibility cannot occur. Indeed, if $G$ could be transformed into $\Lambda[V_2';V_1']$ by changing at most $\eta n^2/8$ edges, then $|G[V_1']|\ge \binom m2-\eta n^2/8$ and $|G[V_2']|\le \eta n^2/8$. Returning from $G$ to $L$ gives
    \begin{align*}
        |L[V_1]|\ge \binom m2-\eta n^2/8
        \qquad\text{and}\qquad
        |L[V_2]|\le \left(\frac{\eta}{8}+\alpha+22\delta_{\ref{LEMMA:K53-count-local-degree-control}}^{1/2}\right)n^2.
    \end{align*}
    Since $m\ge n/3$ for $n$ sufficiently large and $\eta$ is sufficiently small, the two inequalities force $|L[V_1]|>|L[V_2]|$, contradicting the maximality of the partition. Hence $G$ can be transformed into $\Lambda[V_1';V_2']$ by changing at most $\eta n^2/8$ edges.

    The edit distance from $G$ to $\Lambda[V_1';V_2']$ contributes at most $\eta n^2/8$ to the corresponding three-term sum. Passing from $G$ back to $L[V_1'\cup V_2']$ restores at most $\alpha n^2$ deleted edges, and restoring the vertices outside $V_1'\cup V_2'$ contributes at most $22\delta_{\ref{LEMMA:K53-count-local-degree-control}}^{1/2}n^2$. Therefore, by the choice of parameters,
    \begin{align*}
        |L[V_1]|+\bigl((|V_1|-1)|V_2|-|L[V_1,V_2]|\bigr)+\left(\binom{|V_2|}{2}-|L[V_2]|\right)
        \le \left(\frac{\eta}{8}+\alpha+22\delta_{\ref{LEMMA:K53-count-local-degree-control}}^{1/2}\right)n^2
        \le 2\eta n^2.
    \end{align*}
    The first term is $d_{\mathcal B}(v)$, while the last two terms sum to $d_{\mathcal M}(v)$. Thus $d_{\mathcal B}(v)\le 2\eta n^2$ and $d_{\mathcal M}(v)\le 2\eta n^2$. The same argument applies when $v\in V_2$, and therefore $\max\{\Delta(\mathcal B),\Delta(\mathcal M)\}\le 2\eta n^2$.
\end{proof}

\subsection{The local replacement step}\label{SUBSEC:K53-count-local-replacement}

We first record the bipartite calculation needed at the end of the exact argument. It says that among all $n$-vertex bipartite $3$-graphs, the complete balanced bipartite construction has maximum $\Psi$-value.

\begin{lemma}\label{LEMMA:K53-count-bipartite-max}
    Let $n\ge4$, and let $\mathcal G$ be an $n$-vertex bipartite $3$-graph. Then
    \begin{align*}
        \Psi(\mathcal G)\le \Psi(\mathbb B_n),
    \end{align*}
    and equality holds if and only if $\mathcal G\cong\mathbb B_n$.
\end{lemma}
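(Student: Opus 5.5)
We mirror the proof of Lemma~\ref{LEMMA:K53-L2-bipartite-max}. Let $\mathcal G$ be bipartite with bipartition $V_1\cup V_2$, so that $\mathcal G\subseteq\mathbb B[V_1,V_2]$. Both $|\mathcal G|$ and $\mathrm N(K_4^3,\mathcal G)$ are monotone under adding edges, so $\Psi(\mathcal G)\le\Psi(\mathbb B[V_1,V_2])$. Adding any missing edge of $\mathbb B[V_1,V_2]$ raises $|\mathcal G|$ by one, so equality in this step forces $\mathcal G=\mathbb B[V_1,V_2]$. It therefore remains to compare complete bipartite $3$-graphs with different part sizes.

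Write $a\coloneqq|V_1|$ and $b\coloneqq|V_2|=n-a$. In $\mathbb B[V_1,V_2]$, a $4$-set spans $K_4^3$ exactly when each of its $3$-subsets meets both parts, that is, exactly when the $4$-set has two vertices in each part. A $4$-set of type $3+1$ contains a triple inside one part, which is not an edge. Hence
\begin{align*}
    \Psi(\mathbb B[V_1,V_2])=\frac{ab(n-2)}{2}+\binom a2\binom b2,
\end{align*}
where the first term counts the triples meeting both parts. Put $m\coloneqq ab$. Then
\begin{align*}
    \binom a2\binom b2=\frac{ab(a-1)(b-1)}{4}=\frac{m(m-n+1)}{4},
\end{align*}
so
\begin{align*}
    \Psi=\frac{m(n-2)}{2}+\frac{m(m-n+1)}{4}=\frac{m^2+m(n-3)}{4}.
\end{align*}
For $n\ge4$ and $m>0$ this is strictly increasing in $m$. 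The degenerate case $m=0$ gives $\Psi=0$, which is smaller than the balanced value.

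We have $m\le\lfloor n^2/4\rfloor$, with equality if and only if $\{a,b\}=\{\lfloor n/2\rfloor,\lceil n/2\rceil\}$. So $\Psi(\mathcal G)\le\Psi(\mathbb B_n)$, and equality requires both $\mathcal G=\mathbb B[V_1,V_2]$ and a balanced bipartition, that is, $\mathcal G\cong\mathbb B_n$. The converse direction is trivial.

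The only step needing care is the claim that $K_4^3$ copies in $\mathbb B[V_1,V_2]$ are exactly the $2+2$ sets, including the degenerate situation where one part is empty (then $\mathcal G$ is empty). The rest is routine algebra.
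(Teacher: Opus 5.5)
Your proposal is correct and follows essentially the same route as the paper. It uses monotonicity to reduce to $\mathbb B[V_1,V_2]$, the closed form $\Psi(\mathbb B[V_1,V_2])=m(m+n-3)/4$ with $m=|V_1||V_2|$, strict monotonicity in $m$ for $n\ge 4$, and the fact that $m\le\lfloor n^2/4\rfloor$ with equality only for a balanced bipartition.
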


\begin{proof}[Proof of Lemma~\ref{LEMMA:K53-count-bipartite-max}]
    Suppose that $\mathcal G$ is an $n$-vertex bipartite $3$-graph with bipartition $V_1\cup V_2=V(\mathcal G)$. Since adding any missing edge increases $|\mathcal G|$ and cannot decrease the number of copies of $K_4^3$, we have
    \begin{align*}
        \Psi(\mathcal G)\le \Psi(\mathbb B[V_1,V_2]),
    \end{align*}
    and equality in this inequality is possible only when $\mathcal G=\mathbb B[V_1,V_2]$.

    Put $m\coloneqq |V_1||V_2|$. In $\mathbb B[V_1,V_2]$, the edges are precisely the triples meeting both parts, and copies of $K_4^3$ are precisely the four-sets with two vertices in each part. Hence
    \begin{align*}
        \Psi(\mathbb B[V_1,V_2])=\frac{m(n-2)}2+\binom{|V_1|}{2}\binom{|V_2|}{2}=\frac{m(m+n-3)}4.
    \end{align*}
    For $n\ge4$, the expression on the right is strictly increasing in $m$. Since $m\le \lfloor n^2/4\rfloor$, with equality if and only if the bipartition is balanced, we have $\Psi(\mathbb B[V_1,V_2])\le\Psi(\mathbb B_n)$. Equality holds if and only if $\mathcal G=\mathbb B[V_1,V_2]$ and the bipartition is balanced. This proves Lemma~\ref{LEMMA:K53-count-bipartite-max}.
\end{proof}

We also need the local replacement estimate. The next lemma is the clique-counting analogue of Lemma~\ref{LEMMA:K53-L2-switching}.

\begin{lemma}\label{LEMMA:K53-count-switching}
    There exists $\xi_{\ref{LEMMA:K53-count-switching}}>0$ such that for every $0<\xi\le \xi_{\ref{LEMMA:K53-count-switching}}$ there exists $N_{\ref{LEMMA:K53-count-switching}}\coloneqq N_{\ref{LEMMA:K53-count-switching}}(\xi)$ such that the following holds for all $n\ge N_{\ref{LEMMA:K53-count-switching}}$.
    Let $\mathcal G$ be an $n$-vertex $3$-graph with a bipartition $V_1\cup V_2=V(\mathcal G)$, and let $\mathcal B$ and $\mathcal M$ be the bad and missing edges with respect to this partition.
    Let $e_\ast\in\partial\mathcal G$ be a pair in the shadow such that $e_\ast\in\overline K[V_1,V_2]$. Suppose that $e_\ast$ satisfies
    \begin{enumerate}[label=(\roman*), ref=(\roman*)]
        \item\label{LEMMA:K53-count-switching-balance} $\bigl||V_i|-n/2\bigr|\le \xi n$ for $i\in[2]$,
        \item\label{LEMMA:K53-count-switching-degree} $\max\{\Delta(\mathcal B),\Delta(\mathcal M)\}\le \xi n^2$,
        \item\label{LEMMA:K53-count-switching-heavy} $d_{\mathcal M}(e_\ast)\ge 47\xi^{1/2}n$,
        \item\label{LEMMA:K53-count-switching-comparison} $d_{\mathcal M}(e_\ast)\ge d_{\mathcal B}(e_\ast)-\xi n$.
    \end{enumerate}
    Let $\mathcal G^\ast\coloneqq(\mathcal G\setminus\mathcal B(e_\ast))\cup\mathcal M(e_\ast)$. Then
    \begin{align*}
        \Psi(\mathcal G^\ast)>\Psi(\mathcal G).
    \end{align*}
\end{lemma}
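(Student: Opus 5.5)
The plan is to compute $\Psi(\mathcal G^\ast)-\Psi(\mathcal G)$ directly, splitting it into the change in the number of edges and the change in the number of copies of $K_4^3$. By symmetry take $e_\ast=\{u_1,u_2\}\subseteq V_1$, and write $d_{\mathcal M}\coloneqq d_{\mathcal M}(e_\ast)$ and $d_{\mathcal B}\coloneqq d_{\mathcal B}(e_\ast)$.

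\emph{Setup.} Every removed edge has the form $u_1u_2w$ with $w\in N_{\mathcal B}(e_\ast)\subseteq V_1$. Every added edge has the form $u_1u_2w$ with $w\in N_{\mathcal M}(e_\ast)\subseteq V_2$. Hence every copy of $K_4^3$ that is created or destroyed contains $e_\ast$. The edge count changes by $d_{\mathcal M}-d_{\mathcal B}\ge -\xi n$, by assumption~\ref{LEMMA:K53-count-switching-comparison}. The whole argument has to produce a positive quadratic term of order $d_{\mathcal M}^2\ge 47^2\xi n^2$. All error terms must be $O(\xi n^2)$, not $O(\xi^{1/2}n^2)$, since the latter would swamp the gain.

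\emph{Lost copies of $K_4^3$.} A destroyed copy has vertex set $\{u_1,u_2,w,x\}$ with $w\in N_{\mathcal B}(e_\ast)$. There are two cases.
\begin{itemize}
\item If $x\in V_2$, then $u_1u_2x\in\mathcal G$ forces $x\notin N_{\mathcal M}(e_\ast)$. This gives at most $d_{\mathcal B}\bigl(|V_2|-d_{\mathcal M}\bigr)$ such copies.
\item If $x\in V_1$, the copy contains the bad edge $u_1wx$, and that edge determines $\{w,x\}$. So there are at most $d_{\mathcal B}(u_1)\le\Delta(\mathcal B)\le\xi n^2$ such copies.
\end{itemize}

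\emph{Gained copies of $K_4^3$.} I only need two families.
\begin{itemize}
\item First, $\{u_1,u_2,w,x\}$ with $w\in N_{\mathcal M}(e_\ast)$ and $x\in V_2\setminus N_{\mathcal M}(e_\ast)$, for which $u_1u_2x\in\mathcal G$ already. Such a set is a copy in $\mathcal G^\ast$ unless $u_1wx$ or $u_2wx$ is missing; both have type $V_1V_2V_2$. The number of exceptions, summed over all $w$, is at most $d_{\mathcal M}(u_1)+d_{\mathcal M}(u_2)\le 2\xi n^2$. This is the point where I avoid the heavy/light split used in Lemma~\ref{LEMMA:K53-L2-switching}, since that split would cost $O(\xi^{1/2}n^2)$. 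This family contributes at least $d_{\mathcal M}\bigl(|V_2|-d_{\mathcal M}\bigr)-2\xi n^2$.
\item Second, $\{u_1,u_2,w,w'\}$ with $w,w'\in N_{\mathcal M}(e_\ast)$, where both edges $u_1u_2w$ and $u_1u_2w'$ are new. Again the exceptions are sets where $u_1ww'$ or $u_2ww'$ is missing, at most $2\xi n^2$ of them. This family contributes at least $\binom{d_{\mathcal M}}{2}-2\xi n^2$.
\end{itemize}

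\emph{Combining.} Putting the pieces together,
\[
\Psi(\mathcal G^\ast)-\Psi(\mathcal G)\ge (d_{\mathcal M}-d_{\mathcal B})\bigl(|V_2|-d_{\mathcal M}+1\bigr)+\binom{d_{\mathcal M}}{2}-5\xi n^2 .
\]
By assumption~\ref{LEMMA:K53-count-switching-comparison} the first term is at least $-\xi n\cdot n$. By assumption~\ref{LEMMA:K53-count-switching-heavy}, $\binom{d_{\mathcal M}}{2}\ge 1000\,\xi n^2$ for large $n$. So the difference is at least $(1000-6)\xi n^2>0$.

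The main obstacle is exactly this error bookkeeping. The losses from $x\in V_1$ and from missing crossing triples must be charged to the vertex degrees $d_{\mathcal M}(u_i)$ and $d_{\mathcal B}(u_1)$, so that they cost $O(\xi n^2)$ and are beaten by the $\tfrac12 d_{\mathcal M}^2$ gain. A second point to check is that the cancellation between the linear terms $d_{\mathcal M}(|V_2|-d_{\mathcal M})$ and $d_{\mathcal B}(|V_2|-d_{\mathcal M})$ really uses the same factor $|V_2|-d_{\mathcal M}$ on both sides.
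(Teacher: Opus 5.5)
Your proof is correct and takes essentially the same approach as the paper. The paper bounds destroyed copies the same way (a fourth vertex in $V_2\setminus X$, or a bad edge at $u_1$). It obtains the same gain estimate $|X|(|V_2|-|X|)+\binom{|X|}{2}-4\xi n^2$, phrased through the auxiliary graph $G_\ast$ on $V_2$ together with the bound $|G_\ast[X]|\le\binom{|X|}{2}$, whereas you count your two families of created copies directly.
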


\begin{proof}[Proof of Lemma~\ref{LEMMA:K53-count-switching}]
    Choose $\xi_{\ref{LEMMA:K53-count-switching}}>0$ sufficiently small. Fix $0<\xi\le \xi_{\ref{LEMMA:K53-count-switching}}$, and choose $N_{\ref{LEMMA:K53-count-switching}}$ sufficiently large.
    By symmetry, assume that $e_\ast=\{u_1,u_2\}\subseteq V_1$.
    Put
    \begin{align*}
        X\coloneqq N_{\mathcal M}(e_\ast),\qquad Y\coloneqq N_{\mathcal B}(e_\ast).
    \end{align*}
    Then $X\subseteq V_2$, $Y\subseteq V_1$, and assumptions~\ref{LEMMA:K53-count-switching-heavy} and~\ref{LEMMA:K53-count-switching-comparison} give
    \begin{align}
        |X|\ge 47\xi^{1/2}n\qquad\text{and}\qquad |Y|\le |X|+\xi n. \label{equ:K53-count-switching-mb}
    \end{align}

    Let $\mathcal T^+$ be the family of copies of $K_4^3$ which are present in $\mathcal G^\ast$ but not in $\mathcal G$, and let $\mathcal T^-$ be the family of copies of $K_4^3$ which are present in $\mathcal G$ but not in $\mathcal G^\ast$.
    Since only edges containing $e_\ast$ are changed, every copy in $\mathcal T^+\cup\mathcal T^-$ contains $e_\ast$.

    Define a graph $G_\ast$ on $V_2$ by declaring $xx'\in G_\ast$ if and only if $u_1xx',u_2xx'\in\mathcal G$.
    The graph $G_\ast$ is unchanged when passing from $\mathcal G$ to $\mathcal G^\ast$.
    Every copy in $\mathcal T^+$ contains an added edge $u_1u_2x$ with $x\in X$, and its fourth vertex must lie in $V_2$, since every triple of the form $u_1u_2y$ with $y\in V_1$ is absent from $\mathcal G^\ast$.
    Thus every copy in $\mathcal T^+$ is of the form $\{u_1,u_2,x,x'\}$, where $xx'\in G_\ast$ and $\{x,x'\}\cap X\ne\emptyset$.
    Hence
    \begin{align*}
        |\mathcal T^+|=\sum_{x\in X}d_{G_\ast}(x)-|G_\ast[X]|.
    \end{align*}
    For each $x\in X$, a vertex $x'\in V_2\setminus\{x\}$ fails to be adjacent to $x$ in $G_\ast$ only if one of $u_1xx'$ and $u_2xx'$ lies in $\mathcal M$. Each missing edge counted in the following sum contains $u_1$ or $u_2$, and is counted at most twice. Therefore
    \begin{align*}
        \sum_{x\in X}d_{G_\ast}(x)
        &\ge |X|(|V_2|-1)-\sum_{x\in X}\bigl(d_{\mathcal M}(u_1x)+d_{\mathcal M}(u_2x)\bigr)\\
        &\ge |X|(|V_2|-1)-2d_{\mathcal M}(u_1)-2d_{\mathcal M}(u_2)
        \ge |X|(|V_2|-1)-4\xi n^2,
    \end{align*}
    where the last inequality uses assumption~\ref{LEMMA:K53-count-switching-degree}.
    Since $|G_\ast[X]|\le\binom{|X|}{2}$, we obtain
    \begin{align}
        |\mathcal T^+|
        \ge |X|(|V_2|-1)-4\xi n^2 - \binom{|X|}{2}
        = |X||V_2|-\frac{|X|(|X|+1)}2-4\xi n^2. \label{equ:K53-count-switching-created}
    \end{align}

    We now bound the copies destroyed by the replacement.
    Every copy in $\mathcal T^-$ contains a deleted edge $u_1u_2y$ with $y\in Y$, and so it either has three vertices in $V_1$ and one in $V_2$, or it is contained in $V_1$.
    If a copy in $\mathcal T^-$ has three vertices in $V_1$ and one in $V_2$, then it is of the form $\{u_1,u_2,y,z\}$ with $y\in Y$ and $z\in V_2\setminus X$.
    Thus the number of such copies is at most $|Y|(|V_2|-|X|)$.
    If a copy in $\mathcal T^-$ is contained in $V_1$, then it has vertex set $\{u_1,u_2,y,y'\}$ with $y,y'\in Y$ and $u_1yy'\in\mathcal B$. The edge $u_1yy'$ determines the copy, and hence assumption~\ref{LEMMA:K53-count-switching-degree} gives at most $\xi n^2$ such copies.
    Consequently,
    \begin{align}
        |\mathcal T^-|\le |Y|(|V_2|-|X|)+\xi n^2. \label{equ:K53-count-switching-destroyed}
    \end{align}

    Combining~\eqref{equ:K53-count-switching-mb}, \eqref{equ:K53-count-switching-created}, and~\eqref{equ:K53-count-switching-destroyed}, and using $|V_2|\le n$, we get
    \begin{align*}
        |\mathcal T^+|-|\mathcal T^-|
        &\ge |X||V_2|-\frac{|X|(|X|+1)}2-4\xi n^2-(|X|+\xi n)(|V_2|-|X|)-\xi n^2\\
        &=\frac{|X|^2-|X|}{2}-\xi n(|V_2|-|X|)-5\xi n^2\\
        &\ge \frac{|X|^2-|X|}{2}-6\xi n^2
        \ge \xi n^2,
    \end{align*}
    where the last two inequalities follow from $|X|\ge 47\xi^{1/2}n$, $|X|\le n$, and the choice of $N_{\ref{LEMMA:K53-count-switching}}$.
    Therefore
    \begin{align*}
        \mathrm N(K_4^3,\mathcal G^\ast)-\mathrm N(K_4^3,\mathcal G)=|\mathcal T^+|-|\mathcal T^-|\ge \xi n^2.
    \end{align*}
    Also, by~\eqref{equ:K53-count-switching-mb},
    \begin{align*}
        |\mathcal G^\ast|-|\mathcal G|=|X|-|Y|\ge -\xi n.
    \end{align*}
    By the choice of $N_{\ref{LEMMA:K53-count-switching}}$, we get
    \begin{align*}
        \Psi(\mathcal G^\ast)-\Psi(\mathcal G)
        =|\mathcal G^\ast|-|\mathcal G|+\mathrm N(K_4^3,\mathcal G^\ast)-\mathrm N(K_4^3,\mathcal G)
        \ge \xi n^2-\xi n>0.
    \end{align*}
    This completes the proof of Lemma~\ref{LEMMA:K53-count-switching}.
\end{proof}

\subsection{Proof of Theorem~\ref{THM:cliques-exact-K53}}\label{SUBSEC:Proof-cliques-exact-K53}

We now give the proof of Theorem~\ref{THM:cliques-exact-K53}.

\begin{proof}[Proof of Theorem~\ref{THM:cliques-exact-K53}]
    Let $\xi_{\ref{LEMMA:K53-count-switching}}$ be the constant from Lemma~\ref{LEMMA:K53-count-switching}. Choose $\eta>0$ sufficiently small so that Lemma~\ref{LEMMA:K53-count-local-degree-control} and Lemma~\ref{LEMMA:common-local-bad-missing-control} apply with parameter $\eta$. Set $\xi\coloneqq7\eta^{1/2}$, and assume, by decreasing $\eta$ if necessary, that
    \begin{align*}
        \xi<\xi_{\ref{LEMMA:K53-count-switching}},\qquad 47\xi^{1/2}\le \frac1{10},\qquad 2\eta\le \xi.
    \end{align*}
    Choose $0<\delta_0<1$ sufficiently small so that Lemma~\ref{LEMMA:common-near-bipartite-setup} applies with parameter $\delta_0$, and
    \begin{align*}
        \delta_0\le \delta_{\ref{LEMMA:K53-count-local-degree-control}}(\eta),\qquad \delta_0\le \delta_{\ref{LEMMA:common-local-bad-missing-control}}(\eta),\qquad 10\delta_0^{1/2}\le \xi.
    \end{align*}
    Choose $n_0$ sufficiently large so that all results used below apply and, for all $n\ge n_0$,
    \begin{align*}
        \mathrm N(K_4^3,\mathbb B_n)+|\mathbb B_n|-\binom n3
        \ge \left(\frac1{64}-\delta_{\ref{THM:K53-count-edge-stability}}(\delta_0)\right)n^4.
    \end{align*}

    Let $n\ge n_0$, and let $\mathcal H$ be an $n$-vertex $K_5^3$-free $3$-graph maximizing $\Psi$. Since $\mathbb B_n$ is $K_5^3$-free, the extremality of $\mathcal H$ gives $\Psi(\mathcal H)\ge \Psi(\mathbb B_n)$, and hence
    \begin{align*}
        \mathrm N(K_4^3,\mathcal H)
        =\Psi(\mathcal H)-|\mathcal H|
        \ge \Psi(\mathbb B_n)-|\mathcal H|
        \ge \mathrm N(K_4^3,\mathbb B_n)+|\mathbb B_n|-\binom n3
        \ge \left(\frac1{64}-\delta_{\ref{THM:K53-count-edge-stability}}(\delta_0)\right)n^4.
    \end{align*}
    By Theorem~\ref{THM:K53-count-edge-stability}, the graph $\mathcal H$ is $\delta_0$-close to $\mathbb B_n$.

    Let $V_1\cup V_2=V(\mathcal H)$ be a maximum partition of $\mathcal H$, and let $\mathcal B$ and $\mathcal M$ be the corresponding sets of bad and missing edges. Lemma~\ref{LEMMA:common-near-bipartite-setup}, together with the choice of $\delta_0$, gives
    \begin{align}
        \max_{i\in[2]}\left||V_i|-\frac n2\right|\le 10\delta_0^{1/2}n\le \xi n. \label{equ:K53-count-proof-balance}
    \end{align}
    Lemma~\ref{LEMMA:K53-count-local-degree-control} and the choice of $\eta$ give
    \begin{align}
        \max\{\Delta(\mathcal B),\Delta(\mathcal M)\}\le 2\eta n^2\le \xi n^2. \label{equ:K53-count-proof-degree}
    \end{align}
    By Lemma~\ref{LEMMA:common-local-bad-missing-control}, every bad edge $E\in\mathcal B$ contains a pair $e\subseteq E$ with $d_{\mathcal M}(e)\ge n/10$. Moreover, for every $e\in\overline K[V_1,V_2]$,
    \begin{align}
        d_{\mathcal M}(e)\ge d_{\mathcal B}(e)-7\eta^{1/2}n=d_{\mathcal B}(e)-\xi n. \label{equ:K53-count-proof-pair-comparison}
    \end{align}

    Suppose, for a contradiction, that $\mathcal B\ne\emptyset$. Define
    \begin{align*}
        \mathcal I\coloneqq\left\{e\in\overline K[V_1,V_2]:d_{\mathcal M}(e)\ge \frac n{10}\text{ and }\mathcal B(e)\ne\emptyset\right\}=\{e_1,\ldots,e_k\}.
    \end{align*}
    Since every bad edge lies inside one part, the preceding paragraph shows that every bad edge contains some pair from $\mathcal I$.

    Let $\mathcal H_0\coloneqq\mathcal H$, $\mathcal B_0\coloneqq\mathcal B$, and $\mathcal M_0\coloneqq\mathcal M$. For $j\in[k]$, once $\mathcal H_{j-1}$, $\mathcal B_{j-1}$, and $\mathcal M_{j-1}$ have been defined, we proceed as follows. If $\mathcal B_{j-1}(e_j)=\emptyset$, then set $\mathcal H_j\coloneqq\mathcal H_{j-1}$, $\mathcal B_j\coloneqq\mathcal B_{j-1}$, and $\mathcal M_j\coloneqq\mathcal M_{j-1}$. Otherwise set
    \begin{align*}
        \mathcal H_j\coloneqq(\mathcal H_{j-1}\setminus\mathcal B_{j-1}(e_j))\cup\mathcal M_{j-1}(e_j),\qquad \mathcal B_j\coloneqq\mathcal H_j\setminus\mathbb B[V_1,V_2],\qquad \mathcal M_j\coloneqq\mathbb B[V_1,V_2]\setminus\mathcal H_j.
    \end{align*}
    This procedure only deletes bad edges and only adds edges of $\mathbb B[V_1,V_2]$, so
    \begin{align*}
        \mathcal B=\mathcal B_0\supseteq\mathcal B_1\supseteq\cdots\supseteq\mathcal B_k\qquad\text{and}\qquad \mathcal M=\mathcal M_0\supseteq\mathcal M_1\supseteq\cdots\supseteq\mathcal M_k.
    \end{align*}
    Every edge of $\mathbb B[V_1,V_2]$ contains exactly one same-side pair. Hence, before step $j$, the additions made at pairs different from $e_j$ do not affect the missing edges containing $e_j$. Consider a step $j$ with $\mathcal B_{j-1}(e_j)\ne\emptyset$. Then $e_j\in\partial\mathcal H_{j-1}$, and
    \begin{align*}
        d_{\mathcal M_{j-1}}(e_j)=d_{\mathcal M}(e_j)\ge \frac n{10}\ge47\xi^{1/2}n,\qquad d_{\mathcal M_{j-1}}(e_j)\ge d_{\mathcal B_{j-1}}(e_j)-\xi n.
    \end{align*}
    The second inequality follows from the equality above,~\eqref{equ:K53-count-proof-pair-comparison}, and the inclusion $\mathcal B_{j-1}\subseteq\mathcal B$. Together with~\eqref{equ:K53-count-proof-balance} and~\eqref{equ:K53-count-proof-degree}, the inclusions $\mathcal B_{j-1}\subseteq\mathcal B$ and $\mathcal M_{j-1}\subseteq\mathcal M$ show that Lemma~\ref{LEMMA:K53-count-switching} applies to $\mathcal H_{j-1}$ with the fixed partition $V_1\cup V_2$ and the pair $e_j$. Hence $\Psi(\mathcal H_j)>\Psi(\mathcal H_{j-1})$ for every step with $\mathcal B_{j-1}(e_j)\ne\emptyset$, while the other steps leave the graph unchanged.
    
    At least one step is nontrivial. Indeed, choose an original bad edge $E\in\mathcal B$ and a pair $e_j\subseteq E$ with $e_j\in\mathcal I$. If $E$ is still present before step $j$, then $\mathcal B_{j-1}(e_j)\ne\emptyset$. Otherwise, $E$ was removed in an earlier nontrivial step. Therefore
    \begin{align*}
        \Psi(\mathcal H_k)>\Psi(\mathcal H).
    \end{align*}
    Since each original bad edge contains a pair from $\mathcal I$, it is removed when one such pair is processed, unless it was removed earlier. Therefore $\mathcal H_k$ has no bad edge, and so $\mathcal H_k\subseteq\mathbb B[V_1,V_2]$. Completing $\mathcal H_k$ to $\mathbb B[V_1,V_2]$ cannot decrease $\Psi$, and Lemma~\ref{LEMMA:K53-count-bipartite-max} gives
    \begin{align*}
        \Psi(\mathcal H)<\Psi(\mathcal H_k)
        \le \Psi(\mathbb B[V_1,V_2])       
        \le \Psi(\mathbb B_n)\le\Psi(\mathcal H),
    \end{align*}
    a contradiction. Hence $\mathcal B=\emptyset$.

    If $\mathcal M\ne\emptyset$, then adding any edge of $\mathcal M$ keeps the graph bipartite and hence $K_5^3$-free. It also increases $|\mathcal H|$ by one and cannot decrease the number of copies of $K_4^3$. This contradicts the extremality of $\mathcal H$. Therefore $\mathcal M=\emptyset$, and so $\mathcal H=\mathbb B[V_1,V_2]$. Finally, Lemma~\ref{LEMMA:K53-count-bipartite-max} gives $\Psi(\mathcal H)\le\Psi(\mathbb B_n)$, while extremality gives the reverse inequality. Hence equality holds in Lemma~\ref{LEMMA:K53-count-bipartite-max}, and so $\mathcal H\cong\mathbb B_n$.

    We have shown that every $n$-vertex $K_5^3$-free $3$-graph maximizing $\Psi$ is isomorphic to $\mathbb B_n$. Therefore every $n$-vertex $K_5^3$-free $3$-graph $\mathcal G$ satisfies $\Psi(\mathcal G)\le\Psi(\mathbb B_n)$, with equality only when $\mathcal G\cong\mathbb B_n$. By~\eqref{equ:K53-count-clique-decomposition}, the same statement holds for $k(\mathcal G)$. This completes the proof of Theorem~\ref{THM:cliques-exact-K53}.
\end{proof}

\section*{Acknowledgments}\label{SEC:Acknowledgments}
\begin{sloppypar}
J.H. was supported by the National Key R\&D Program of China (No.~2023YFA1010202). 
X.L. was supported by the Excellent Young Talents Program (Overseas) of the National Natural Science Foundation of China. 
Y.Z. was supported by the Doctoral Student Program of the Young S\&T Talents Cultivation Project, CAST.
\end{sloppypar}

\bibliographystyle{plain}
\bibliography{ColoredTuran}
\end{document}